\documentclass[reqno]{amsart}
\usepackage{amsfonts}%[12pt]»òÕß[reqno]
\usepackage{mathrsfs}
\usepackage{color}
\usepackage{cite}
\usepackage{amscd}
\usepackage{latexsym}
\usepackage{amsfonts}
\usepackage{graphicx}
\usepackage{CJK,indentfirst,amsmath,amsfonts,amssymb,amsthm,cite,cases,subeqnarray,setspace}

\oddsidemargin .5cm \evensidemargin .5cm
\marginparwidth 106pt \marginparsep 7pt \topmargin 0.5cm
\headsep 19pt
\headheight 13pt
\textheight 595pt% 8.5in
\textwidth  400pt  % 5.5in
\sloppy
\begin{document}
%\begin{CJK*}{GBK}{song}

\title[]
{Long wavelength limit\\ for the quantum Euler-Poisson equation}
\author{Huimin Liu and Xueke Pu}

\address{Huimin Liu \newline
Department of Mathematics, Chongqing University, Chongqing 401331, P.R.China}

\address{Xueke Pu \newline
Department of Mathematics, Chongqing University, Chongqing 401331, P.R.China} \email{ xuekepu@cqu.edu.cn}

\thanks{This work is supported in part by NSFC (11471057) and Natural Science Foundation Project of CQ CSTC (cstc2014jcyjA50020).}
\subjclass[2000]{35M20; 35Q35} \keywords{}
% »»ÐÐÓôËÃüÁ·ÅÔÚÀ¨ºÅÀï\hfill\break\indent }

\begin{abstract}
In this paper, we consider the long wavelength limit for the quantum Euler-Poisson equation. Under the Gardner-Morikawa transform, we derive the quantum Korteweg-de Vries (KdV) equation by a singular perturbation method. We show that the KdV dynamics can be seen at time interval of order $O(\epsilon^{-3/2})$. When the nondimensional quantum parameter $H=2$, it reduces to the inviscid Burgers equation.
\end{abstract}

\maketitle \numberwithin{equation}{section}
\newtheorem{proposition}{Proposition}[section]
\newtheorem{theorem}{Theorem}[section]
\newtheorem{lemma}[theorem]{Lemma}
\newtheorem{remark}[theorem]{Remark}
\newtheorem{hypothesis}[theorem]{Hypothesis}
\newtheorem{definition}{Definition}[section]
\newtheorem{corollary}{Corollary}[section]
\newtheorem{assumption}{Assumption}[section]
\section{Introduction}
\setcounter{section}{1}\setcounter{equation}{0}
In this paper, we consider a one-dimensional two species quantum plasma system made by one electronic and one ionic fluid, in the electrostatic approximation \cite{HGG3}. For simplicity, we only consider the continuity and momentum equations and ignore the energy transport equation, which are sufficient to describe the classical ion-acoustic waves \cite{N4}. The system is governed by the following equations
\begin{subequations}\label{mm2}
\begin{numcases}{}
\partial_{t}n_{e}+\partial_{x}(n_{e}u_{e})=0, \label{mm2-1}\\
\partial_{t}n_{i}+\partial_{x}(n_{i}u_{i})=0, \label{mm2-2}\\
\partial_{t}u_{e}+u_{e}\partial_{x}u_{e} =\frac{e}{m_{e}}\partial_{x}\phi -\frac{1}{m_{e}n_{e}}\partial_{x}P
+\frac{\hbar^{2}}{2m_{e}^{2}} \partial_{x} \left(\frac{\partial_{x}^{2}\sqrt{n_{e}}} {\sqrt{n_{e}}}\right),\label{mm2-3}\\
\partial_{t}u_{i}+u_{i}\partial_{x}u_{i}=-\frac{e}{m_{i}}\partial_{x}\phi,\label{mm2-4}\\
\partial_{x}^{2}\phi=\frac{e}{\epsilon_{0}}(n_{e}-n_{i}),\label{mm2-5}
\end{numcases}
\end{subequations}
where $n_{e,i}$ are the electronic and ionic number densities, $u_{e,i}$ the electronic and ionic velocities, $\phi$ the scalar potential, $m_{e,i}$ the electron and ion masses, $-e$ the electron charge, $\hbar=\frac{h}{2\pi}$, where $h$ is Planck's constant and $\epsilon_{0}$ the vacuum permittivity. The electron fluid pressure $P=P(n_{e})$, modeled by the equation of state for a one dimensional zero-temperature Fermi gas, is given by
\begin{equation}\label{equ1'''}
\begin{split}
P=\frac{m_{e}v_{F_{e}}^{2}}{3n_{0}^{2}}n_{e}^{3},
\end{split}
\end{equation}
where $n_{0}$ is the equilibrium density for both electrons and ions, and $v_{F_{e}}$ is the electrons Fermi velocity, related to the Fermi temperature $T_{F_{e}}$ by $m_{e}v_{F_{e}}^{2}=\kappa_{B}T_{F_{e}}$, where $\kappa_{B}$ is the Boltzmann constant. Throughout this paper, we assume such a cubic law for the electron fluid pressure, which is the most important significant physical case, as pointed out by Jackson \cite{Jackson62,GHZ15}.

Equations \eqref{mm2-1} and \eqref{mm2-2} represent conservation of charge and mass. Equations \eqref{mm2-3} and \eqref{mm2-4} account for momentum balance. The third order term in \eqref{mm2-3}, proportional to $\hbar^{2}$, takes into account the influence of quantum diffraction effects. However, the motion of ion can be taken as classical in view of the high ion mass in comparison to the electron mass. Accordingly, \eqref{mm2-4} contains no quantum terms. Finally, \eqref{mm2-5} is Poisson's equation, describing the self-consistent electrostatic potential.

Take the following rescaling,
\begin{equation}\label{equ11''}
\begin{split}
&\bar{x}=\frac{\omega_{p_{e}}x}{v_{F_{e}}}, \ \ \bar{t}=\omega_{p_{i}}t,\ \ \bar{n_{e}}=\frac{n_{e}}{n_{0}}, \ \ \bar{n_{i}}=\frac{n_{i}}{n_{0}}, \\
&\bar{u_{e}}=\frac{u_{e}}{c_{s}}, \ \ \bar{u_{i}}=\frac{u_{i}}{c_{s}}, \ \ \bar{\phi}=\frac{e\phi}{\kappa_{B}T_{F_{e}}},
\end{split}
\end{equation}
where $\omega_{p_{e}}$ and $\omega_{p_{i}}$ are the corresponding electron and ion plasma frequencies and $c_s$ is the quantum ion-acoustic velocity, given by
\begin{equation}\label{equ12''}
\begin{split}
\omega_{p_{e}}=\left(\frac{n_{0}e^{2}}{m_{e} \epsilon_{0}}\right)^{1/2}, \ \ \omega_{p_{i}}=\left(\frac{n_{0}e^{2}}{m_{i} \epsilon_{0}}\right)^{1/2}, \ \ c_{s}=\left(\frac{\kappa_{B}T_{F_{e}}}{m_{i}}\right)^{1/2}.
\end{split}
\end{equation}
In addition, consider nondimensional parameter $H={\hbar\omega_{p_{e}}}/{\kappa_{B}T_{F_{e}}}$. Physically, $H$ is the ratio between the electron plasmon energy and the electron Fermi energy. Using the new variables and dropping bars for simplifying natation, we obtain from \eqref{mm2-3}
\begin{equation}\label{equ14'}
\begin{split}
\frac{m_{e}}{m_{i}}(\partial_{t}u_{e}+u_{e}\partial_{x}u_{e})=\partial_{x}\phi-n_{e}\partial_{x}n_{e}
+\frac{H^{2}}{2}\partial_{x}\left(\frac{\partial_{x}^{2}\sqrt{n_{e}}}{\sqrt{n_{e}}}\right).
\end{split}
\end{equation}
Since ${m_{e}}/{m_{i}}\ll1$, we let the left-hand side of \eqref{equ14'} to be zero and then integrate about $x$ with the boundary conditions $n_{e}=1$, $\phi=0$ at infinity, to obtain
\begin{equation}\label{equ14''}
\begin{split}
\phi=-\frac{1}{2}+\frac{1}{2}n_{e}^{2}-\frac{H^{2}}{2\sqrt{n_{e}}}\partial_{x}^{2}\sqrt{n_{e}}.
\end{split}
\end{equation}
This last equation is the electrostatic potential in terms of the electron density and its derivatives. Even when the quantum diffraction effects are negligible $(H=0)$, the electron equilibrium is given by a Fermi-Dirac distribution and not by a Maxwell-Boltzmann one.

Applying the rescaling \eqref{equ11''} to \eqref{mm2-2}, \eqref{mm2-4} and \eqref{mm2-5}, we have by dropping the bars
\begin{subequations}\label{equ1}
\begin{numcases}{}
\partial_{t}n_{i}+\partial_{x}(n_{i}u_{i})=0,\label{equ1-1}\\
\partial_{t}u_{i}+u_{i}\partial_{x}u_{i}=-\partial_{x}\phi,\label{equ1-2}\\
\partial_{x}^{2}\phi=n_{e}-n_{i},\label{equ1-3}
\end{numcases}
\end{subequations}
Equations \eqref{equ1-1}-\eqref{equ1-3}, together with \eqref{equ14''}, provide a reduced model of four equations with four unknown quantities, $n_{i}$, $u_{i}$, $n_{e}$ and $\phi$. This reduced model is the basic model to be studied in the following, which will lead to the quantum Korteweg-de Vries (KdV) equation \eqref{kdv} under the Gardner-Morikawa transform \cite{GM,SG69}.

Obviously, the reduced system \eqref{equ14''}-\eqref{equ1} admits the homogeneous equilibrium solution $(n_e,n_i,u_i,\phi)=(1,1,0,0)$. Global existence of smooth solutions around the equilibrium is an outstanding difficult problem for the Euler-Poisson problem. Without quantum effects, Guo \cite{Guo98} firstly obtained global irrotational solutions with small velocity for the 3D electron fluid, based on the Klein-Gordon effect. Then, Jang, Li, Zhang and Wu \cite{Jang12,JLZ14,LW14} obtained global smooth small solutions for the 2D electron fluid in Euler-Poisson system. Very recently, Guo, Han and Zhang \cite{GHZ15} finally completely settled this problem and proved that no shocks form for the 1D Euler-Poisson system for electrons. For Euler-Poisson equation for ions, Guo and Pausader \cite{GP11} constructed global smooth irrotational solutions with small amplitude for ion dynamics. For the Euler-Poisson system \eqref{equ1} with quantum effects, there is no existence result, to the best knowledge of the authors.

To access weakly nonlinear solutions for the quantum ion-acoustic system \eqref{equ14''}-\eqref{equ1}, a singular perturbation method can be applied to the weakly nonlinear classical waves, which finally leads to the quantum KdV equation. For details, see Section 2. To this aspect, one may refer to the recent papers \cite{LLS,P,GP14}. In particular, Guo and Pu established rigorously the KdV limit for the ion Euler-Poisson system in 1D for both the cold and hot plasma case, where the electron density satisfies the classical Maxwell-Boltzmann law. This result was generalized to the higher dimensional case in \cite{P}, and the 2D Kadomtsev-Petviashvili-II (KP-II) equation and the 3D Zakharov-Kuznetsov equation are derived for well-prepared initial data under different scalings. Almost at the same time, \cite{LLS} also established the KdV limit in 1D and the Zakharov-Kuznetsov equation in 3D from the Euler-Poisson system. Han-Kwan \cite{HK} also introduced a long wave scaling for the Vlasov-Poisson equation and derived the KdV equation in 1D and the Zakharov-Kuznetsov equation in 3D using the modulated energy method. For other studies for the Euler-Poisson system or related models, the interested readers may refer to \cite{PG15,CG00,ELT01,G,LT02,LT03}, to list only a few. For derivation of the KdV equation from the water waves without surface tension, see \cite{SW00} and the references therein.

In the present paper, we will continue to study the long wavelength limit for the reduced system \eqref{equ1} for ions with quantum effects. Under the Gardner-Morikawa transform, the quantum KdV equation is derived when $H>0$ and $H\neq2$. But when $H=2$, the quantum KdV equation \eqref{kdv} reduces to the inviscid Burger's equation. The formal derivation of the quantum KdV equation can be found in \cite{HGG3} and is given in the next section. The main interest in this paper is to make such a formal derivation rigorous. To do so, we need to obtain uniform (in $\epsilon$) estimates for the remainders $\left(n_{eR}^{\epsilon},n_{iR}^{\epsilon}, u_{iR}^{\epsilon}\right)$ and then recover the uniform estimates of $\phi^{\epsilon}_R$ from the relation \eqref{equ14''}. To apply the Gronwall inequality to complete the proof, we define the triple norm
\begin{equation}\label{tri}
\begin{split}
|\!|\!|(N_i,N_{e},U)|\!|\!|_{\epsilon}^{2}= &\|(N_i,N_{e},U)\|_{H^{2}}^{2} +\epsilon\|(\partial_{x}^{3}N_{e}, \partial_{x}^{3}U)\|_{L^{2}}^{2}\\
&+\epsilon^{2}\|\partial_{x}^{4}N_{e}\|_{L^{2}}^{2} +\epsilon^{3}\|\partial_{x}^{5}N_{e}\|_{L^{2}}^{2}
+\epsilon^{4}\|\partial_{x}^{6}N_{e}\|_{L^{2}}^{2},
\end{split}
\end{equation}
which depends on the parameter $\epsilon$ in the Gardner-Morikawa transform. But we regard $H$ as a fixed constant. After careful computations, we finally close the estimates in this triple norm, which gives uniform (in $\epsilon$) estimates for the remainders $(N_i,N_{e},U)$ in $H^2$ and completes the proof. The main result is stated in Theorem \ref{thm1}. Furthermore, this implies that
\begin{equation}
\sup_{[0,\epsilon^{-3/2}\tau]}
\left\|\left(
\begin{array}{c}
(n_{i}-1)/\epsilon\\ (n_{e}-1)/\epsilon\\ u_{i}/\epsilon
\end{array}
\right)-KdV\right\|_{H^2}\leq C\epsilon,
\end{equation}
for some $C>0$ independent of $\epsilon>0$, for any fixed $\tau>0$ of order $O(1)$. Here the `KdV' stands for the first approximation of $(n_i,n_e,u_i)$ under the Gardner-Morikawa transform in \eqref{equ39}. It shows that the KdV dynamics can be seen at time interval of order $O(\epsilon^{-3/2})$. The result also applies to the case when $H=2$, where the inviscid Burger's equation is derived.

The results in this paper can be generalized to the following general cases. Firstly, for definiteness, we let the electron pressure satisfies the cubic law in \eqref{equ1'''}, but the result in this paper can be generalized to general $\gamma$-law, which will lead to a different relation between $\phi$ and $n_e$ in \eqref{equ14''}. Secondly, the ion momentum equation \eqref{mm2-4} does not contain ion pressure, which generally depends on ion density with the form $P_i(n_i)=T_i\ln n_i$. This paper corresponds to the cold ion case $T_i=0$. But the result in this paper can be generalized to general case $T_i>0$, and indeed, the proof will be slightly simpler since in this case, the system is Friedrich symmetrizable. The result in this paper can be also generalized to the general $\gamma$-law of the ion pressure, i.e., when $P_i(n_i)=T_in_{i}^{\gamma}$ for $\gamma\geq1$. For clarity, we will not mention these general cases in the rest of the paper and concentrate on the case $P(n_e)\sim n_e^3$ in \eqref{equ1'''} and zero ion temperature case $T_i=0$.

This paper is organized as follows. In Section 2, we present the formal derivation of the quantum KdV equation \eqref{kdv} and state the main result in Theorem \ref{thm1}. In Section 3, we present uniform estimates for the remainders in \eqref{cut}. The main estimates are stated in Proposition \ref{P1-P2} and \ref{P3}. Finally, we complete the proof in Section 4.

\section{Formal expansion and Main results}

\subsection{Formal KdV expansion}\label{1.1}
By the classical Gardner-Morikawa transformation \cite{GM,SG69}
\begin{equation}\label{equ39}
x\rightarrow\epsilon^{\frac{1}{2}}(x-t),\  \,t\rightarrow\epsilon^{\frac{3}{2}}t,
\end{equation}
we obtain from \eqref{equ1} the parameterized system
\begin{equation}\label{equ2}
\begin{cases}
\epsilon\partial_{t}n_{i}-\partial_{x}n_{i} +\partial_{x}(n_{i}u_{i})=0,\\
\epsilon\partial_{t}u_{i}-\partial_{x}u_{i} +u_{i}\partial_{x}u_{i}=-\partial_{x}\phi,\\
\epsilon\partial_{x}^{2}\phi=n_{e}-n_{i},
\end{cases}
\end{equation}
where $\epsilon$ is the amplitude of the initial disturbance and is assumed to be small compared with unity and \eqref{equ14''} is rescaled into the following relation
$$\phi=-\frac{1}{2}+\frac{1}{2}n_{e}^{2} -\frac{\epsilon H^{2}}{2\sqrt{n_{e}}}\partial_{x}^{2}\sqrt{n_{e}}.$$

We consider the following formal expansion around the equilibrium solution $(n_{i},n_{e},u_{i})=(1,1,0)$,
\begin{equation}\label{expan-formal}
\begin{cases}
n_{i}=1+\epsilon n_{i}^{(1)}+\epsilon^{2}n_{i}^{(2)}+\epsilon^{3}n_{i}^{(3)}+\epsilon^{4}n_{i}^{(4)}+\cdots,\\
n_{e}=1+\epsilon n_{e}^{(1)}+\epsilon^{2}n_{e}^{(2)}+\epsilon^{3}n_{e}^{(3)}+\epsilon^{4}n_{e}^{(4)}+\cdots,\\
u_{i}=\epsilon u_{i}^{(1)}+\epsilon^{2}u_{i}^{(2)}+\epsilon^{3}u_{i}^{(3)}+\epsilon^{4}u_{i}^{(4)}+\cdots.
\end{cases}
\end{equation}
Plugging \eqref{expan-formal} into \eqref{equ1}, we get a power series of $\epsilon$, whose coefficients depend on $(n_{i}^{(k)},n_{e}^{(k)},u_{i}^{(k)})$ for $k=1,2,\cdots$.

At the order $O(1)$, the coefficients are automatically balanced.

At the order $O(\epsilon)$, we obtain
\begin{subequations}\label{e1}
\begin{numcases}{(\mathcal S_0)}
-\partial_{x}n_{i}^{(1)}+\partial_{x}u_{i}^{(1)}=0,\\
-\partial_{x}u_{i}^{(1)}=-\partial_{x}n_{e}^{(1)},\\
0=n_{e}^{(1)}-n_{i}^{(1)}.
\end{numcases}
\end{subequations}
This enables us to assume the relation
\begin{equation}\label{equ3}
{(\mathcal L_1):\ \ \ \ }
n_{e}^{(1)}=n_{i}^{(1)}=u_{i}^{(1)},
\end{equation}
which makes \eqref{e1} valid and shows that the mode is quasi-neutral in a first approximation. Then only $n_{i}^{(1)}$ needs to be determined.

At the order $O(\epsilon^2)$, we obtain
\begin{subequations}\label{e2}
\begin{numcases}{(\mathcal S_1)}
\partial_{t}n_{i}^{(1)}-\partial_{x}n_{i}^{(2)}+\partial_{x}u_{i}^{(2)}+\partial_{x}(n_{i}^{(1)}u_{i}^{(1)})=0\label{e2-1},\\
\partial_{t}u_{i}^{(1)}-\partial_{x}u_{i}^{(2)}+u_{i}^{(1)}\partial_{x}u_{i}^{(1)}=-\partial_{x}n_{e}^{(2)}-n_{e}^{(1)}\partial_{x}n_{e}^{(1)}
+\frac{H^{2}}{4}\partial_{x}^{3}n_{e}^{(1)}\label{e2-2},\\
\partial_{x}^{2}n_{e}^{(1)}=n_{e}^{(2)}-n_{i}^{(2)}.\label{e2-3}
\end{numcases}
\end{subequations}
Differentiating \eqref{e2-3} with respect to $x$, and then adding the resultant and \eqref{e2-1} to \eqref{e2-2} together, we deduce that $n_{i}^{(1)}$ satisfies the quantum Kortweg-de Vries equation
\begin{equation}\label{kdv}
\partial_{t}n_{i}^{(1)}+2n_{i}^{(1)}\partial_{x}n_{i}^{(1)}+\frac{1}{2}(1-\frac{H^{2}}{4})\partial_{x}^{3}n_{i}^{(1)}=0,
\end{equation}
where we have used the relation \eqref{equ3}. We note that the system \eqref{equ3}, \eqref{kdv} are self-contained, which do not depend on $(n_{i}^{(j)}, n_{e}^{(j)}, u_{i}^{(j)})$ for $j\geq2$. We also note that \eqref{kdv} is different from the classical KdV equation due to the presence of the parameter $H$. When $H\neq2$, it can be transformed into the classical KdV equation, while when $H=2$, it reduces to the inviscid Burger's equation, which is drastically different from the KdV equation. For derivation of KdV from water waves, see \cite{KdV}.

Much of the properties of the KdV equation follow from the interplay between advection and dispersion. One can see that the quantum effects can even invert the sign of dispersion for large $H$. However, this sign is immaterial since we can apply the transform $t\to-t,x\to x, n_i^{(1)}\to-n_i^{(1)}$. This implies that for $H>2$, the localized solutions (bright solitons) with $n_i^{(1)}>0$ of the original equation correspond also to localized solutions, but with inverted polarization ($n_i^{(1)}<0$, dark solitons) and propagating backward in time. But when $H=2$, the dispersive term vanishes, which eventually yields the formation of a shock in the Burger's equation. For details of the solitons, one may refer to \cite{HGG3}.

When $H\neq2$, we have the following existence theorem \cite{KPV91,KPV93}.
\begin{theorem}\label{thm2}
Let $H\neq2$ and $\tilde s_1\geq2$ be a sufficiently large integer. Then for any given initial data $n_{i0}^{(1)}\in H^{\tilde s_1}(\mathbb R)$, there exists $\tau_*>0$ such that the initial value problem \eqref{kdv} has a unique solution
$n_{i}^{(1)}\in L^{\infty}\big(-\tau_*,\tau_*; H^{\tilde s_1}(\Bbb R)\big)$. Furthermore, by using the conservation laws of the KdV equation, we can extend the solution to any time interval $[-\tau,\tau]$.
\end{theorem}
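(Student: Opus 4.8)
The plan is to reduce \eqref{kdv} to the standard KdV equation and then quote (or reprove by the energy method) the classical Cauchy theory. Since $H\neq 2$, the dispersion coefficient $\beta:=\tfrac12\bigl(1-\tfrac{H^{2}}{4}\bigr)$ is nonzero. Writing $n_{i}^{(1)}(t,x)=\tfrac{\beta}{2}\,v(\beta t,x)$ — and, when $\beta<0$, first composing with the sign-reversing transformation noted after \eqref{kdv}, which flips the sign of the dispersive term — the Cauchy problem for \eqref{kdv} with datum $n_{i0}^{(1)}\in H^{\tilde s_1}(\mathbb{R})$ becomes the Cauchy problem for $\partial_{t}v+v\partial_{x}v+\partial_{x}^{3}v=0$ with datum $v_0\in H^{\tilde s_1}(\mathbb{R})$, and the two problems are equivalent at the level of $H^{\tilde s_1}$ solutions. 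It therefore suffices to treat the standard KdV equation.

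For local existence I would use the energy method, which is more than adequate for integer $\tilde s_1\ge 2$ large and is in any case subsumed by the sharp results of \cite{KPV91,KPV93}. First construct approximate solutions $v^{\nu}$ by parabolic regularization (adding $\nu\partial_{x}^{2}v$), or equivalently by Galerkin truncation. Then derive uniform-in-$\nu$ bounds: applying $\partial_{x}^{k}$ to the equation and pairing with $\partial_{x}^{k}v$ in $L^{2}$, the dispersive term drops out because $\int \partial_{x}^{k}v\,\partial_{x}^{k+3}v\,dx=0$ by antisymmetry, while the transport term is controlled by a commutator (Kato–Ponce / Moser) estimate, $\bigl|\int \partial_{x}^{k}(v\partial_{x}v)\,\partial_{x}^{k}v\,dx\bigr|\lesssim \|\partial_{x}v\|_{L^{\infty}}\|v\|_{H^{k}}^{2}\lesssim \|v\|_{H^{\tilde s_1}}^{3}$ for $0\le k\le \tilde s_1$. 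Summing gives $\tfrac{d}{dt}\|v^{\nu}\|_{H^{\tilde s_1}}^{2}\le C\|v^{\nu}\|_{H^{\tilde s_1}}^{3}$, hence a uniform lifespan $\tau_{*}\gtrsim\|v_0\|_{H^{\tilde s_1}}^{-1}$ and uniform bounds on $[-\tau_{*},\tau_{*}]$; passing to the limit $\nu\to 0$ (using Aubin–Lions for the compactness needed in the nonlinear term) yields $v\in L^{\infty}(-\tau_{*},\tau_{*};H^{\tilde s_1})$. Uniqueness follows from an $L^{2}$ energy estimate on the difference of two solutions, and continuous dependence from the Bona–Smith argument.

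For the extension to an arbitrary interval $[-\tau,\tau]$ I would invoke the conservation laws of KdV. The first three, $\int v\,dx$, $\int v^{2}\,dx$, and the Hamiltonian $\int\bigl(\tfrac12 v_{x}^{2}-\tfrac16 v^{3}\bigr)\,dx$, already give an a priori bound on $\|v(t)\|_{H^{1}}$ (controlling the cubic term by Gagliardo–Nirenberg and Young), and more generally the $m$-th invariant of the KdV hierarchy controls $\|\partial_{x}^{m}v\|_{L^{2}}^{2}$ modulo lower-order terms, so one obtains a $t$-independent a priori bound on $\|v(t)\|_{H^{\tilde s_1}}$. Combined with the continuation criterion from the local theory — the solution persists as long as $\int_{0}^{t}\|v_{x}\|_{L^{\infty}}$, a fortiori $\|v\|_{H^{\tilde s_1}}$, stays finite — this extends $v$, hence $n_{i}^{(1)}$, to every $[-\tau,\tau]$.

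The step requiring the most care is the higher-order a priori estimate: one must verify that in $\partial_{x}^{k}(v\partial_{x}v)$ only the top term $v\,\partial_{x}^{k+1}v$ is genuinely dangerous and that, after one integration by parts, it too is dominated by $\|v_{x}\|_{L^{\infty}}\|v\|_{H^{k}}^{2}$, the remaining Leibniz terms being lower order, so that together with the exact cancellation of the dispersive contribution the estimate closes. For the global part, the only input beyond the classical KdV conservation laws is that finitely many of them suffice to reach order $\tilde s_1$, which is standard. Everything else is routine, and the sharp local theory of Kenig–Ponce–Vega could equally be quoted in place of the energy method.
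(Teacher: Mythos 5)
Your proposal is correct and follows essentially the same route as the paper, which gives no proof of Theorem \ref{thm2} but simply invokes the classical KdV well-posedness theory of \cite{KPV91,KPV93} together with the conservation laws for the extension to arbitrary time intervals. Your rescaling $n_{i}^{(1)}=\tfrac{\beta}{2}v(\beta t,x)$ with $\beta=\tfrac12(1-\tfrac{H^2}{4})\neq0$ reducing \eqref{kdv} to the standard KdV equation, and the energy-method/conservation-law details you supply, are exactly the standard arguments being quoted.
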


%In fact, we can obtain some traveling wave solutions to \eqref{kdv} for $H\neq2$, see for example \cite{HGG3}, $n_{i}^{(1)}=n_{i}^{(1)}(x-ct)$ for an arbitrary constant phase velocity $c$, where we suppose $c>0$, then we have
%\begin{equation}
%\begin{split}\label{soliton}
%n_{i}^{(1)}=\frac{3c}{2}\sec^{2}\big(\sqrt{\frac{c}{2}}\frac{x-ct}{(1-H^{2}/4)^{1/2}}\big), \ \ \ \ \  when \ H<2,\\
%n_{i}^{(1)}=c-\frac{3}{2}\sec^{2}\big(\sqrt{\frac{c}{2}}\frac{x-ct}{(H^{2}/4-1)^{1/2}}\big), \ \ \ \ \  when \ H>2,
%\end{split}
%\end{equation}
%where we have disregarded an irrelevant integration constant.

There is also an existence theorem for $H=2$, see \cite{SD,Majda84}.
\begin{theorem}\label{thm2''}
Let $H=2$ and $\tilde s_2\geq2$ be a sufficiently large integer. Then for any given initial data $n_{i0}^{(1)}\in H^{\tilde s_2}(\Bbb R)$, there exists $\tilde{\tau}_*>0$ such that the initial value problem \eqref{kdv} with $H=2$ has a unique solution $n_{i}^{(1)}\in L^{\infty}\big(0,\tilde{\tau_*}; H^{\tilde s_2}(\Bbb R)\big)$ with initial data $n_{i0}^{(1)}$.
\end{theorem}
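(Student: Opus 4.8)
The plan is to treat \eqref{kdv} with $H=2$---that is, the inviscid Burgers equation $\partial_t n_i^{(1)} + 2 n_i^{(1)}\partial_x n_i^{(1)} = 0$---by the classical energy method for quasilinear hyperbolic equations. First I would set up the usual linearized iteration: starting from $v^{(0)}\equiv n_{i0}^{(1)}$, define $v^{(k+1)}$ as the solution of the linear transport equation $\partial_t v^{(k+1)} + 2 v^{(k)} \partial_x v^{(k+1)} = 0$ with $v^{(k+1)}|_{t=0} = n_{i0}^{(1)}$; each such problem is solvable by standard linear hyperbolic theory (integration along the characteristics of the coefficient $v^{(k)}$) and preserves the class $C([0,T];H^{\tilde s_2})$ since $v^{(k)}$ already lies in it by induction. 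Alternatively one may use the vanishing-viscosity approximation $\partial_t v^\mu + 2v^\mu\partial_x v^\mu = \mu \partial_x^2 v^\mu$, whose solutions are global and smooth for each $\mu>0$; the estimates below are identical.

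The heart of the argument is a uniform-in-$k$ a priori bound on a fixed time interval. Applying $\partial_x^j$ for $0\le j\le \tilde s_2$ to the linearized equation, pairing with $\partial_x^j v^{(k+1)}$ in $L^2$, and integrating by parts, the only dangerous contribution is the commutator $\big[\partial_x^j,\,v^{(k)}\big]\partial_x v^{(k+1)}$, which by a Kato--Ponce/Moser-type commutator estimate together with the one-dimensional Sobolev embedding $H^{\tilde s_2}(\mathbb R)\hookrightarrow W^{1,\infty}(\mathbb R)$ (valid since $\tilde s_2\ge 2$) gives
$$\frac{d}{dt}\|v^{(k+1)}\|_{H^{\tilde s_2}}^2 \;\le\; C\,\|v^{(k)}\|_{H^{\tilde s_2}}\,\|v^{(k+1)}\|_{H^{\tilde s_2}}^2 .$$
Feeding this into Gronwall's inequality with the choice $R^2 := 2\|n_{i0}^{(1)}\|_{H^{\tilde s_2}}^2$ and an induction on $k$ produces a time $\tilde\tau_*>0$ depending only on $\|n_{i0}^{(1)}\|_{H^{\tilde s_2}}$ such that $\sup_{[0,\tilde\tau_*]}\|v^{(k)}\|_{H^{\tilde s_2}}\le R$ for every $k$.

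Next I would show the sequence converges. Subtracting the equations for $v^{(k+1)}$ and $v^{(k)}$, the difference $w^{(k)}=v^{(k+1)}-v^{(k)}$ satisfies $\partial_t w^{(k)}+2v^{(k)}\partial_x w^{(k)}=-2\,w^{(k-1)}\partial_x v^{(k)}$ with $w^{(k)}|_{t=0}=0$; a plain $L^2$ energy estimate, using only the uniform bound $R$ and the vanishing of the initial data, gives $\sup_{[0,\tilde\tau_*]}\|w^{(k)}\|_{L^2}\le \tfrac12\sup_{[0,\tilde\tau_*]}\|w^{(k-1)}\|_{L^2}$ after shrinking $\tilde\tau_*$ once more, so $(v^{(k)})$ is Cauchy in $C([0,\tilde\tau_*];L^2)$. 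Interpolating this $L^2$ convergence against the uniform $H^{\tilde s_2}$ bound yields a limit $n_i^{(1)}\in C([0,\tilde\tau_*];H^{s})$ for every $s<\tilde s_2$ and, by weak-$*$ compactness, $n_i^{(1)}\in L^\infty(0,\tilde\tau_*;H^{\tilde s_2})$; passing to the limit in the weak formulation shows $n_i^{(1)}$ solves \eqref{kdv} with $H=2$ and attains the datum $n_{i0}^{(1)}$. Uniqueness follows from the same $L^2$ difference estimate applied to two solutions, again via Gronwall.

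The main obstacle---and essentially the only place genuine care is required---is the top-order commutator estimate, since the quasilinear term $2v\partial_x v$ formally loses a derivative; the rest is bookkeeping. A related subtlety is that the contraction for the iteration closes only in the \emph{low} norm $L^2$ rather than in $H^{\tilde s_2}$ (the familiar loss-of-a-derivative phenomenon of quasilinear theory), which is why convergence is established in $L^2$ and the top regularity recovered a posteriori by interpolation and weak compactness. Note finally that, in contrast to Theorem~\ref{thm2}, no global-in-time extension is asserted here: the Burgers flow generically develops a gradient singularity (shock) in finite time, which is precisely why the statement is phrased with the one-sided interval $(0,\tilde\tau_*)$.
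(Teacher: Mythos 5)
Your proposal is correct: the standard linearized-iteration (or vanishing-viscosity) scheme with Kato--Ponce commutator estimates, contraction in the low $L^2$ norm, and recovery of the $H^{\tilde s_2}$ regularity by interpolation and weak-$*$ compactness is exactly the classical local well-posedness argument for the quasilinear equation $\partial_t n_i^{(1)}+2n_i^{(1)}\partial_x n_i^{(1)}=0$. The paper itself gives no proof of Theorem \ref{thm2''} but simply cites \cite{SD,Majda84}, and the argument in those references is essentially the one you have written out, so your treatment matches the intended proof.
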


To find out the equation satisfied by $(n_{i}^{(2)},n_{e}^{(2)},u_{i}^{(2)})$ assuming $(n_{i}^{(1)},n_{e}^{(1)},u_{i}^{(1)})$ is known form \eqref{equ3} and \eqref{kdv}, we express $(n_{i}^{(2)},n_{e}^{(2)},u_{i}^{(2)})$ in terms of $(n_{i}^{(1)},n_{e}^{(1)},u_{i}^{(1)})$ from \eqref{e2},
\begin{subequations}\label{equ3'}
\begin{numcases}{(\mathcal L_2):\ \ \ }
n_{e}^{(2)}=n_{i}^{(2)}+\partial_{x}^{2}n_{e}^{(1)},\\
u_{i}^{(2)}=n_{i}^{(2)}+g^{(1)}, g^{(1)}=-\int_{0}^{x}\mathfrak{g}^{(1)}(t,\xi)d\xi,\nonumber\\ \ \ \ \ \ \ \ \ \ \ \ \ \ \ \ \ \ \ \ \ \ \ \ \ \mathfrak{g}^{(1)}=-\partial_{t}u_{i}^{(1)}+\partial_{\xi}(n_{i}^{(1)}u_{i}^{(1)}),
\end{numcases}
\end{subequations}
which makes \eqref{e2} valid. Thus only $n_{i}^{(2)}$ needs to be determined.

At the order $O(\epsilon^3)$, we obtain
\begin{subequations}\label{e3}
\begin{numcases}{(\mathcal S_2)}
\partial_{t}n_{i}^{(2)}-\partial_{x}n_{i}^{(3)}+\partial_{x}u_{i}^{(3)}+\partial_{x}(n_{i}^{(1)}u_{i}^{(2)}+n_{i}^{(2)}u_{i}^{(1)})=0,\label{e3-1}\\
\partial_{t}u_{i}^{(2)}-\partial_{x}u_{i}^{(3)}+\partial_{x}(u_{i}^{(1)}u_{i}^{(2)})
=-\partial_{x}n_{e}^{(3)}-\partial_{x}(n_{e}^{(1)}n_{e}^{(2)})\nonumber\\ \ \ \ \ \ \ \ \ \
+\frac{H^{2}}{4}(\partial_{x}^{3}n_{e}^{(2)}-2\partial_{x}n_{e}^{(1)}\partial_{x}^{2}n_{e}^{(1)}-n_{e}^{(1)}\partial_{x}^{3}n_{e}^{(1)}),\label{e3-2}\\
\partial_{x}^{2}n_{e}^{(2)}+n_{e}^{(1)}\partial_{x}^{2}n_{e}^{(1)}+(\partial_{x}n_{e}^{(1)})^{2}-\frac{H^{2}}{4}\partial_{x}^{4}n_{e}^{(1)}
=n_{e}^{(3)}-n_{i}^{(3)}.\label{e3-3}
\end{numcases}
\end{subequations}
Differentiating \eqref{e3-3} with respect to $x$, and then adding the resultant and \eqref{e3-2} to \eqref{e3-1} together, we deduce that $n_{i}^{(2)}$ satisfies the linearized inhomogeneous quantum KdV equation
\begin{equation}\label{kdv1}
\partial_{t}n_{i}^{(2)}+2\partial_{x}(n_{i}^{(1)}n_{i}^{(2)})+\frac{1}{2}(1-\frac{H^{2}}{4})\partial_{x}^{3}n_{i}^{(2)}=G^{(1)},
\end{equation}
where we have used \eqref{equ3'} and $G^{(1)}$ depending on only $n_{i}^{(1)}$. Again, the system \eqref{kdv1} and \eqref{equ3'} for $(n_{i}^{(2)},n_{e}^{(2)},u_{i}^{(2)})$ are self contained, which do not depend on $(n_{i}^{(j)},n_{e}^{(j)},u_{i}^{(j)})$ for $j\geq3$.\\

Inductively, at the order $O(\epsilon^{k})$, we obtain a system $(\mathcal S_{k-1})$ for $(n_{i}^{(k-1)},n_{e}^{(k-1)},u_{i}^{(k-1)})$, from which we obtain
\begin{equation}\label{relation}
{(\mathcal L_k):}\ \ \ \
n_{e}^{(k)}=n_{i}^{(k)}+h^{(k-1)},\ \ \ u_{i}^{(k)}=n_{i}^{k}+g^{(k-1)},
\end{equation}
where $h^{(k-1)}$ and $g^{(k-1)}$ depend only on $(n_{e}^{(j)})$ for $1\leq j\leq k-1$. Thus we need only to determine $n_{i}^{(k)}$. At the order $O(\epsilon^{k+1})$, we obtain a system $(\mathcal{S}_{k})$ for $(n_{i}^{(k)},n_{e}^{(k)},u_{i}^{(k)})$, from which we obtain the linearized inhomogeneous KdV equation for $n_{i}^{(k)}$,
\begin{equation}\label{linearized}
\begin{split}
\partial_{t}n_{i}^{(k)}+2\partial_{x}(n_{i}^{(1)}n_{i}^{(k)})+\frac{1}{2}(1-\frac{H^{2}}{4})\partial_{x}^{3}n_{i}^{(k)}=G^{(k-1)}, \ \ \ k\geq3,
\end{split}
\end{equation}
where $G^{(k-1)}$ depends only on $n_{i}^{(1)},n_{i}^{(2)},\cdots,n_{i}^{(k-1)}$, which are ``known" from the first $(k-1)^{th}$ steps. Also, it is important to note that the system \eqref{relation} and \eqref{linearized} for $n_{i}^{(k)},n_{e}^{(k)},u_{i}^{(k)}$ are self contained, which do not depend on $(n_{i}^{(j)},n_{e}^{(j)},u_{i}^{(j)})$ for $j\geq k+1$.

For the solvability of $(n_{i}^{(k)},n_{e}^{(k)},u_{i}^{(k)})$ for $k\geq2$, we state the following
\begin{theorem}\label{thm3}
Let $k\geq 2$, $\tilde s_k\leq \tilde s_1-3(k-1)$ be sufficiently large integers and $n_{i0}^{(k)}\in H^{\tilde s_k}(\Bbb R)$. Then when $H\neq 2$, the initial value problem \eqref{linearized} with initial data $n_{i0}^{(k)}$ has a unique solution $n_{i}^{(k)}\in L^{\infty}(-\tau,\tau; H^{\tilde s_k}(\Bbb R))$ for any $\tau>0$. When $H=2$, the initial value problem \eqref{linearized} has a unique solution $n_{i}^{(k)}\in L^{\infty}(0,\tilde\tau_*; H^{\tilde s_k}(\Bbb R))$, where $\tilde\tau_*$ is given in Theorem \ref{thm2''}.
\end{theorem}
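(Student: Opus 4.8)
The plan is to prove Theorem \ref{thm3} by induction on $k$. Since $n_i^{(1)}$ is fixed once and for all by Theorem \ref{thm2} (on any interval $(-\tau,\tau)$ when $H\ne2$) or Theorem \ref{thm2''} (on $(0,\tilde\tau_*)$ when $H=2$), and since the source $G^{(k-1)}$ of \eqref{linearized} — respectively $G^{(1)}$ of \eqref{kdv1} — depends only on the already-constructed $n_i^{(1)},\dots,n_i^{(k-1)}$, each step of the induction reduces to the \emph{linear} Cauchy problem
\[
\partial_t v+2\partial_x\big(n_i^{(1)}v\big)+c\,\partial_x^3 v=G,\qquad v|_{t=0}=n_{i0}^{(k)},\qquad c:=\tfrac12\Big(1-\tfrac{H^2}{4}\Big),
\]
with $n_i^{(1)}\in L^\infty(I;H^{\tilde s_1})$ and $G\in L^\infty(I;H^{\tilde s_k})$ prescribed, where $I=(-\tau,\tau)$ if $H\ne2$ and $I=(0,\tilde\tau_*)$ if $H=2$. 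The base case $k=2$ is this problem with $G=G^{(1)}$; the inductive step feeds $n_i^{(1)},\dots,n_i^{(k-1)}$ into the formula for $G^{(k-1)}$.

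When $H\ne2$, $c\ne0$ and this is a linear KdV equation with a smooth variable coefficient, which I would solve by the usual energy method combined with a vanishing-viscosity (or Galerkin) regularization. Differentiating the equation $j$ times for $0\le j\le\tilde s_k$ and pairing with $\partial_x^j v$ in $L^2(\mathbb{R})$, the dispersive term drops out by skew-adjointness of $\partial_x^3$, while the convection term and the commutators $[\partial_x^j, n_i^{(1)}\partial_x]$ are bounded by $C(\|n_i^{(1)}\|_{H^{\tilde s_k+1}})\|v\|_{H^{\tilde s_k}}$ via Moser/Kato--Ponce estimates (here $\tilde s_k+1\le\tilde s_1$ is precisely what is needed). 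Gronwall's inequality then yields $\|v(t)\|_{H^{\tilde s_k}}\le C(\|n_{i0}^{(k)}\|_{H^{\tilde s_k}}+\int_I\|G\|_{H^{\tilde s_k}})$ uniformly on $I$; passing to the limit in the regularization gives existence, the same identity applied to the difference of two solutions gives uniqueness, and since Theorem \ref{thm2} supplies $n_i^{(1)}$ on every $(-\tau,\tau)$ the solution $n_i^{(k)}$ is likewise global, which is the first claim.

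When $H=2$, $c=0$ and the equation collapses to the linear transport equation $\partial_t v+2n_i^{(1)}\partial_x v+2(\partial_x n_i^{(1)})v=G$, which I would solve along the characteristics $\dot X=2n_i^{(1)}(t,X)$ — or by the same $H^{\tilde s_k}$ energy estimate as above, now without the third-order term. The only change is that $n_i^{(1)}$ is available only on $[0,\tilde\tau_*]$ by Theorem \ref{thm2''}, since the Burgers equation \eqref{kdv} with $H=2$ may form a gradient singularity; hence $n_i^{(k)}$ is obtained only on $[0,\tilde\tau_*]$, as asserted.

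The hard part is not the linear theory but checking that $G^{(k-1)}$ is an admissible source, i.e. that it is a polynomial in $n_i^{(1)},\dots,n_i^{(k-1)}$ and finitely many of their \emph{spatial} derivatives lying in $L^\infty(I;H^{\tilde s_k})$ as soon as $\tilde s_k\le\tilde s_1-3(k-1)$. This requires: (i) eliminating $n_e^{(j)},u_i^{(j)}$ in favour of $n_i^{(j)}$ through the relations $(\mathcal L_j)$ in \eqref{equ3'}, \eqref{relation}, and trading the time derivatives $\partial_t n_i^{(j)}$ thereby generated for spatial derivatives using the KdV equation $n_i^{(j)}$ already satisfies; (ii) verifying that the antiderivatives $g^{(j)},h^{(j)}$ in $(\mathcal L_j)$ are always antiderivatives of \emph{exact} $x$-derivatives, so that they remain in the relevant Sobolev spaces — transparent at the first step, where substituting \eqref{kdv} into $\mathfrak g^{(1)}$ turns it into $\mathfrak g^{(1)}=2\partial_x((n_i^{(1)})^2)+\tfrac12(1-\tfrac{H^2}{4})\partial_x^3 n_i^{(1)}$, a perfect derivative, and this feature propagates; and (iii) a careful count of the order of differentiation, in particular confirming the cancellations that reduce the a priori five-derivative loss carried by the quantum terms $\tfrac{H^2}{4}\partial_x^3 n_e^{(k-1)}$ and by the once-differentiated Poisson relation down to a net loss of three derivatives per order — exactly what lets the bound $\tilde s_k\le\tilde s_1-3(k-1)$ propagate through the induction. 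Once this bookkeeping is in place, the induction closes immediately via the linear estimates of the previous two steps.
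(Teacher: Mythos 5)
The paper does not actually prove Theorem \ref{thm3}: it simply declares the proof ``standard'', assumes the profiles are as smooth as needed, and explicitly declines to address the optimality of $\tilde s_k$. Your proposal fills in exactly the standard route the authors are appealing to — induction on $k$, reduction to a linear Cauchy problem with the known coefficient $n_i^{(1)}$ and known source $G^{(k-1)}$, $H^{\tilde s_k}$ energy estimates (skew-adjointness of $\partial_x^3$, Kato--Ponce commutator bounds, Gronwall) when $H\neq2$, and a linear transport argument on $[0,\tilde\tau_*]$ when $H=2$ — so there is no divergence of method, and your sketch is in fact more explicit than the paper.

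One caveat: the step you label (iii), the claim that cancellations reduce the derivative loss per order from five to three so that $\tilde s_k\le\tilde s_1-3(k-1)$ propagates, is asserted rather than checked. A quick count of $G^{(1)}$ (from \eqref{e3} via $(\mathcal L_2)$ and the KdV equation, e.g. the contributions $(1-\tfrac{H^2}{4})\partial_x^3 n_e^{(2)}$ with $n_e^{(2)}=n_i^{(2)}+\partial_x^2 n_e^{(1)}$, the term $-\tfrac{H^2}{4}\partial_x^5 n_e^{(1)}$ from differentiating \eqref{e3-3}, and $\partial_t g^{(1)}$) produces $\partial_x^5 n_i^{(1)}$ terms whose coefficients do not obviously sum to zero, so this cancellation would need to be verified or the index constraint weakened (e.g.\ to a five-derivative loss per step, or simply ``$\tilde s_1$ sufficiently large''). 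Since the paper itself sidesteps this bookkeeping by not addressing the optimality of $\tilde s_k$, this is a point to settle in your write-up rather than a conflict with the paper's argument.
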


The proof of Theorem \ref{thm3} is standard. Based on this theorem, we will assume that these solutions $(n_{i}^{(k)},n_{e}^{(k)},u_{i}^{(k)})$ for $1\leq k\leq 4$ are as smooth as we want. The optimality of $\tilde s_k$ will not be addressed in this paper.

\subsection{Main result}\label{sect-rem} To show that $n_{i}^{(1)}$ converges to a solution of the KdV equation as $\epsilon\rightarrow0$, we must make the above procedure rigorous. Let $(n_{e},n_{i},u_{i})$ be the solution of the scaled system (1.3) of the following expansion

\begin{equation}\label{cut}
\begin{cases}
n_{i}&=1+\epsilon n_{i}^{(1)}+\epsilon^{2}n_{i}^{(2)}+\epsilon^{3}n_{i}^{(3)}+\epsilon^{4}n_{i}^{(4)} +\epsilon^{3}N_i,\\
n_{e}&=1+\epsilon n_{e}^{(1)}+\epsilon^{2}n_{e}^{(2)}+\epsilon^{3}n_{e}^{(3)} +\epsilon^{4}n_{e}^{(4)}+\epsilon^{3}N_e,\\
u_{i}&=\epsilon u_{i}^{(1)}+\epsilon^{2}u_{i}^{(2)}+\epsilon^{3}u_{i}^{(3)} +\epsilon^{4}u_{i}^{(4)}+\epsilon^{3}U,
\end{cases}
\end{equation}
where $(n_{i}^{(1)}$,$n_{e}^{(1)}$,$u_{i}^{(1)})$ satisfies \eqref{e1} and \eqref{equ3}, ($n_{i}^{(k)}$,$n_{e}^{(k)}$,$u_{i}^{(k)}$) satisfies \eqref{relation} and \eqref{linearized} for $2\leq k\leq4$, and $(N_{i},N_{e},U)$ is the remainder. To simplify the notation slightly, we set
\begin{equation}
\begin{cases}
\tilde{n_i}= n_i^{(1)}+\epsilon n_i^{(2)}+\epsilon^{2}n_i^{(3)}+\epsilon^{3}n_i^{(4)},\\
\tilde{n_e}= n_e^{(1)}+\epsilon n_e^{(2)}+\epsilon^{2}n_e^{(3)}+\epsilon^{3}n_e^{(4)},\\
\tilde{u_i}= u_i^{(1)}+\epsilon u_i^{(2)}+\epsilon^{2}u_i^{(3)}+\epsilon^{3}u_i^{(4)}.
\end{cases}
\end{equation}
After careful computations, we obtain the following remainder system for $(N_{i},N_{e},U)$,
\begin{subequations}\label{rem}
\begin{numcases}{}
\partial_{t}N_{i}-\frac{1-u_{i}}{\epsilon}\partial_{x}N_{i}+\frac{n_{i}}{\epsilon}\partial_{x}U
+\partial_{x}\tilde{n_{i}}U+\partial_{x}\tilde{u_{i}}N_{i}+\epsilon \mathcal{R}_{1}=0,\label{rem-1}\\
\partial_{t}U-\frac{1-u_{i}}{\epsilon}\partial_{x}U+\partial_{x}\tilde{u_{i}}U=
-\frac{n_{e}}{\epsilon}\partial_{x}N_{e}-\partial_{x}\tilde{n_{e}}N_{e}\nonumber\\ \ \ \ \ \ \
+\frac{H^{2}}{4}\Big[\frac{3\epsilon^{2}(\partial_{x}\tilde{n_{e}})^{2}}{n_{e}^{3}}\partial_{x}N_{e}
-\frac{2\epsilon\partial_{x}^{2}\tilde{n_{e}}}{n_{e}^{2}}\partial_{x}N_{e}
+\frac{3\epsilon^{4}\partial_{x}\tilde{n_{e}}}{n_{e}^{3}}(\partial_{x}N_{e})^{2}\nonumber\\ \ \ \ \ \ \
-\frac{2\epsilon\partial_{x}\tilde{n_{e}}}{n_{e}^{2}}\partial_{x}^{2}N_{e}
+\frac{\epsilon^{6}}{n_{e}^{3}}(\partial_{x}N_{e})^{3}
-\frac{2\epsilon^{3}}{n_{e}^{2}}\partial_{x}N_{e}\partial_{x}^{2}N_{e}
+\frac{1}{n_{e}}\partial_{x}^{3}N_{e}\nonumber\\ \ \ \ \ \ \
+\frac{\epsilon}{n_{e}^{3}}(\mathcal{R}_{2}^{3}+\mathcal{R}_{2}^{4})\Big]+\epsilon \mathcal{R}_{2}^{1,2},\label{rem-2}\\
2\partial_{x}\tilde{n_{e}}\partial_{x}N_{e}+\epsilon^{2}(\partial_{x}N_{e})^{2}+\frac{n_{e}}{\epsilon}\partial_{x}^{2}N_{e}
+\partial_{x}^{2}\tilde{n_{e}}N_{e}+\mathcal{R}_{3}^{1}\nonumber\\ \ \ \ \ \ \
-\frac{H^{2}}{4}\Big[-\frac{12\epsilon^{3}(\partial_{x}\tilde{n_{e}})^{3}}{n_{e}^{4}}\partial_{x}N_{e}
+\frac{14\epsilon^{2}\partial_{x}\tilde{n_{e}}\partial_{x}^{2}\tilde{n_{e}}}{n_{e}^{3}}\partial_{x}N_{e}
-\frac{3\epsilon\partial_{x}^{3}\tilde{n_{e}}}{n_{e}^{2}}\partial_{x}N_{e}\nonumber\\ \ \ \ \ \ \
-\frac{18\epsilon^{5}(\partial_{x}\tilde{n_{e}})^{2}}{n_{e}^{4}}(\partial_{x}N_{e})^{2}
+\frac{7\epsilon^{2}(\partial_{x}\tilde{n_{e}})^{2}}{n_{e}^{3}}\partial_{x}^{2}N_{e}
+\frac{7\epsilon^{4}\partial_{x}^{2}\tilde{n_{e}}}{n_{e}^{3}}(\partial_{x}N_{e})^{2}\nonumber\\ \ \ \ \ \ \
-\frac{4\epsilon\partial_{x}^{2}\tilde{n_{e}}}{n_{e}^{2}}\partial_{x}^{2}N_{e}
-\frac{12\epsilon^{7}\partial_{x}\tilde{n_{e}}}{n_{e}^{4}}(\partial_{x}N_{e})^{3}
+\frac{14\epsilon^{4}\partial_{x}\tilde{n_{e}}}{n_{e}^{3}}\partial_{x}N_{e}\partial_{x}^{2}N_{e}\nonumber\\ \ \ \ \ \ \
-\frac{3\epsilon\partial_{x}\tilde{n_{e}}}{n_{e}^{2}}\partial_{x}^{3}N_{e}
-\frac{3\epsilon^{9}}{n_{e}^{4}}(\partial_{x}N_{e})^{4}
+\frac{7\epsilon^{6}}{n_{e}^{3}}(\partial_{x}N_{e})^{2}\partial_{x}^{2}N_{e}
-\frac{2\epsilon^{3}}{n_{e}^{2}}(\partial_{x}^{2}N_{e})^{2}\nonumber\\ \ \ \ \ \ \
-\frac{3\epsilon^{3}}{n_{e}^{2}}\partial_{x}N_{e}\partial_{x}^{3}N_{e}
+\frac{\partial_{x}^{4}N_{e}}{n_{e}}
+\frac{\mathcal R_{3}^{2}+\mathcal{R}_{3}^{3}}{n_{e}^{4}}\Big]
=\frac{N_{e}-N_{i}}{\epsilon^{2}},\label{rem-3}
\end{numcases}
\end{subequations}
where
\begin{eqnarray}
\left\{
\begin{array}{llll}
\mathcal{R}_{1}=\partial_{x}(n_{i}^{(1)}u_{i}^{(4)})+\partial_{x}[n_{i}^{(2)}(u_{i}^{(3)}
+\epsilon u_{i}^{(4)})]+\partial_{x}[n_{i}^{(3)}(u_{i}^{(2)}\\ \ \ \ \ \ \ \ \
+\epsilon u_{i}^{(3)}+\epsilon^{2} u_{i}^{(4)})]+\partial_{x}(u_{i}^{(4)}\tilde{u_{i}})-\epsilon\partial_{t}n_{i}^{(4)},\\
\mathcal{R}_{2}^{1}=-\partial_{t}u_{i}^{(4)}+u_{i}^{(1)}\partial_{x}u_{i}^{(4)}+u_{i}^{(2)}(\partial_{x}u_{i}^{(3)}
+\epsilon\partial_{x}u_{i}^{(4)})\\ \ \ \ \ \ \ \ \ \ \
+u_{i}^{(3)}(\partial_{x}u_{i}^{(2)}+\epsilon\partial_{x}u_{i}^{(3)}
+\epsilon^{2}u_{i}^{(4)})+u_{i}^{(4)}\partial_{x}\tilde{u_{i}},\\
\mathcal{R}_{2}^{2}=n_{e}^{(1)}\partial_{x}n_{e}^{(4)}+n_{e}^{(2)}(\partial_{x}n_{e}^{(3)}+\epsilon\partial_{x}n_{e}^{(4)})\\ \ \ \ \ \ \ \ \ \ \
+n_{e}^{(3)}(\partial_{x}n_{e}^{(2)}+\epsilon\partial_{x}n_{e}^{(3)}+\epsilon^{2} n_{e}^{(4)})+n_{e}^{(4)}\tilde{n_{e}},\\
\mathcal{R}_{2}^{3}=\mathcal{R}_{2}^{3}(n_{e}^{(1)},n_{e}^{(2)},n_{e}^{(3)},n_{e}^{(4)}),\\
\mathcal{R}_{2}^{4}=\mathcal{R}_{2}^{4}(\epsilon N_{e}),\\
\mathcal{R}_{3}^{1}=\mathcal{R}_{3}^{1}(n_{e}^{(1)},n_{e}^{(2)},n_{e}^{(3)},n_{e}^{(4)}),\\
\mathcal{R}_{3}^{2}=\mathcal{R}_{3}^{2}(n_{e}^{(1)},n_{e}^{(2)},n_{e}^{(3)},n_{e}^{(4)}),\\
\mathcal{R}_{3}^{3}=\mathcal{R}_{3}^{3}(N_{e}),
\end{array}
\right.
\end{eqnarray}
where $\mathcal{R}_{2}^{4}$ and $\mathcal{R}_{3}^{3}$ are smooth functions of $N_{e}$, and do not involve any derivatives of $N_{e}$. The mathematical key difficulty is to derive uniform in $\epsilon$ estimates for the remainder $(N_{e},N_{i},U)$.

For convenient usage, we give the following
\begin{lemma}\label{L8}
For $\alpha=0,1,\cdots$ integers, there exists some constant $C=C(\|n_{e}^{(i)}\|_{H^{\tilde s_i}})$  such that
\begin{equation}\label{equ28}
\begin{split}
\|\mathcal R_1,\mathcal R_2^{1,2,3},\mathcal R_3^{1,2}\|_{H^{\alpha}}\leq C ,\ \ \ \alpha=0,1,\cdots,
\end{split}
\end{equation}
\begin{equation}\label{equ29}
\begin{split}
&\|\mathcal R_2^{4}\|_{H^{\alpha}}\leq C \epsilon\|N_{e}\|_{H^{\alpha}}, \ \ \ \alpha=0,1,\cdots,\\
&\|\mathcal R_3^{3}\|_{H^{\alpha}}\leq C \|N_{e}\|_{H^{\alpha}}, \ \ \ \ \alpha=0,1,\cdots,
\end{split}
\end{equation}
and
\begin{equation}\label{equ34}
\begin{split}
&\|\partial_tR_2^{4}\|_{H^{\alpha}}\leq C \epsilon\|\partial_tN_{e}\|_{H^{\alpha}}, \ \ \ \alpha=0,1,\cdots,\\
&\|\partial_t R_3^{3}\|_{H^{\alpha}}\leq C \|\partial_tN_{e}\|_{H^{\alpha}}, \ \ \ \alpha=0,1,\cdots.
\end{split}
\end{equation}
\end{lemma}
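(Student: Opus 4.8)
The plan is to handle the three estimates \eqref{equ28}, \eqref{equ29}, \eqref{equ34} in turn, in each case reducing everything to three ingredients: (i) the uniform-in-time bounds on the profiles $n_i^{(k)},n_e^{(k)},u_i^{(k)}$ ($1\le k\le 4$) furnished by Theorems \ref{thm2}, \ref{thm2''}, \ref{thm3}; (ii) the standard Moser-type product estimate $\|fg\|_{H^\alpha}\le C(\|f\|_{L^\infty}\|g\|_{H^\alpha}+\|f\|_{H^\alpha}\|g\|_{L^\infty})$ and the composition estimate $\|F(v)\|_{H^\alpha}\le C(\|v\|_{L^\infty})\|v\|_{H^\alpha}$ valid for smooth $F$ with $F(0)=0$, together with the one-dimensional Sobolev embedding $H^1(\mathbb R)\hookrightarrow L^\infty(\mathbb R)$; and (iii) the trivial observation that $\epsilon\in(0,1)$, so that every explicit positive power of $\epsilon$ is $\le1$ and can only help.

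For \eqref{equ28}: by construction each of $\mathcal R_1,\mathcal R_2^{1},\mathcal R_2^{2},\mathcal R_2^{3},\mathcal R_3^{1},\mathcal R_3^{2}$ is a fixed polynomial expression in the profiles, in a fixed finite number of their spatial derivatives, and in finitely many of their time derivatives (the only $t$-derivatives that appear directly are $\partial_t n_i^{(4)}$ in $\mathcal R_1$ and $\partial_t u_i^{(4)}$ in $\mathcal R_2^1$). First I would use the relations $(\mathcal L_k)$ in \eqref{equ3}, \eqref{equ3'}, \eqref{relation} to rewrite every $n_i^{(k)},u_i^{(k)}$ as a fixed differential polynomial in $n_e^{(1)},\dots,n_e^{(k)}$, and then use the (linearized, inhomogeneous) quantum KdV equations \eqref{kdv}, \eqref{kdv1}, \eqref{linearized} repeatedly to eliminate all time derivatives in favour of spatial derivatives of the profiles. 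After this, each $\mathcal R$ is a fixed differential polynomial, with coefficients bounded uniformly for $\epsilon\in(0,1)$ (after collecting the $\epsilon$'s coming from $\tilde n_i,\tilde u_i$), in $n_e^{(1)},\dots,n_e^{(4)}$ and their spatial derivatives up to some finite order $m=m(\alpha)$. The product estimate then gives $\|\mathcal R\|_{H^\alpha}\le C$ where $C$ is a polynomial in the quantities $\sup_t\|n_e^{(i)}(t)\|_{H^{\alpha+m}}$, which are finite by Theorems \ref{thm2}--\ref{thm3} once $\tilde s_1$, hence each $\tilde s_i$, is taken $\ge\alpha+m(\alpha)$; this is exactly the standing convention that the profiles are ``as smooth as we want''. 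This yields \eqref{equ28}.

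For \eqref{equ29}: by the structural fact recorded after \eqref{rem}, $\mathcal R_2^4$ and $\mathcal R_3^3$ are smooth functions of $\epsilon N_e$, respectively $N_e$, alone, containing no spatial derivatives of $N_e$ and vanishing at $N_e=0$; thus I can factor $\mathcal R_2^4(\epsilon N_e)=\epsilon N_e\,\Psi_2(\epsilon N_e)$ and $\mathcal R_3^3(N_e)=N_e\,\Psi_3(N_e)$ with $\Psi_2,\Psi_3$ smooth on the range of the relevant arguments. Applying the composition estimate to $\Psi_2(\epsilon N_e),\Psi_3(N_e)$ and then the product estimate yields
\[
\|\mathcal R_2^4\|_{H^\alpha}\le C(\|\epsilon N_e\|_{L^\infty})\,\|\epsilon N_e\|_{H^\alpha}=\epsilon\,C(\|\epsilon N_e\|_{L^\infty})\,\|N_e\|_{H^\alpha},\qquad \|\mathcal R_3^3\|_{H^\alpha}\le C(\|N_e\|_{L^\infty})\,\|N_e\|_{H^\alpha}.
\]
Since $\|N_e\|_{L^\infty}\le C\|N_e\|_{H^1}\le C\,|\!|\!|(N_i,N_e,U)|\!|\!|_{\epsilon}$, under the a priori bootstrap bound $|\!|\!|(N_i,N_e,U)|\!|\!|_{\epsilon}\le M$ that will be in force throughout Sections 3 and 4 (which in particular keeps $n_e=1+\epsilon\tilde n_e+\epsilon^3N_e$ bounded away from $0$ for $\epsilon$ small, hence the arguments inside the domain where $\Psi_2,\Psi_3$ are smooth), the constants $C(\cdot)$ are bounded by a constant depending only on $M$ and the profile norms, uniformly for $\epsilon\in(0,1)$, giving \eqref{equ29}.

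Finally, for \eqref{equ34}: since $\mathcal R_2^4$ and $\mathcal R_3^3$ carry no explicit $t$-dependence, the chain rule gives $\partial_t\mathcal R_2^4=(\mathcal R_2^4)'(\epsilon N_e)\,\epsilon\,\partial_tN_e$ and $\partial_t\mathcal R_3^3=(\mathcal R_3^3)'(N_e)\,\partial_tN_e$; splitting off the constant value of each derivative at the origin (which on $\mathbb R$ is not in $L^2$ but is harmless once paired with $\partial_tN_e\in H^\alpha$), applying the composition estimate to the remaining part (which again vanishes at $N_e=0$), and then the product estimate to multiply by $\epsilon\partial_tN_e$, respectively $\partial_tN_e$, yields \eqref{equ34}. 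The whole argument is mechanical; the only genuine bookkeeping is the derivative count in the second step, namely verifying that the finite order of spatial derivatives of $n_e^{(j)}$ entering each $\mathcal R$ (after eliminating $t$-derivatives) never exceeds $\tilde s_j\le\tilde s_1-3(j-1)$, and I expect the most delicate point to be simply tracking that the structural claim ``$\mathcal R_2^4,\mathcal R_3^3$ involve no derivatives of $N_e$'' — already built into the derivation of \eqref{rem} from the Taylor expansions of $n_e\mapsto n_e^{-j}$ and $n_e\mapsto n_e^{-1/2}\partial_x^2\sqrt{n_e}$ — holds, since it is precisely this feature that makes the $N_e$-dependent remainders harmless when the energy estimates are closed later.
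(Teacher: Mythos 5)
Your argument is correct and is essentially the fleshed-out version of what the paper does: the paper dismisses Lemma \ref{L8} in one line as a ``straightforward'' consequence of the algebra property of $H^{1}$ (i.e.\ Moser-type product and composition estimates) together with the smoothness of the profiles from Theorems \ref{thm2}--\ref{thm3}, which is exactly the machinery you invoke, including the use of the a priori bound \eqref{priori} to control the composition constants for $\mathcal R_2^{4}(\epsilon N_e)$ and $\mathcal R_3^{3}(N_e)$ as the paper does in Section \ref{sect-energy}. No substantive difference or gap to report.
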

Recalling the fact that $H^1$ is an algebra, the estimate for Lemma \ref{L8} is straightforward. The details are hence omitted.
Our main result of this paper is the following\\
\begin{theorem}\label{thm1}
Let $\tilde{s_{i}}$ be sufficiently large and $(n_{i}^{(1)},n_{e}^{(1)},u_{i}^{(1)})\in H^{\tilde{s_{1}}}$ be a solution constructed in Theorem \ref{thm2} for the quantum KdV equation with initial data $(n_{i0},n_{e0},u_{i0})\in H^{\tilde{s_{1}}}$ satisfying \eqref{equ3}. Let $(n_{i}^{(j)},n_{e}^{(j)},u_{i}^{(j)})\in H^{\tilde{s_{j}}}$ (i=2,3,4) be solutions of \eqref{relation} and \eqref{linearized} constructed in Theorem \ref{thm3} with initial data
$(n_{i0}^{j},n_{e0}^{j},u_{i0}^{j})\in H^{\tilde{s_{j}}}$ satisfying \eqref{relation}. Let
$(N_{i0},N_{e0},U_{0})$ satisfy \eqref{rem} and assume
\begin{equation*}
\begin{split}
n_{i0}&=1+\epsilon n_{i0}^{(1)}+\epsilon^{2}n_{i0}^{(2)}+\epsilon^{3}n_{i0}^{(3)}+\epsilon^{4}n_{i0}^{(4)}+\epsilon^{3}N_{i0},\\
n_{e0}&=1+\epsilon n_{e0}^{(1)}+\epsilon^{2}n_{e0}^{(2)}+\epsilon^{3}n_{e0}^{(3)}+\epsilon^{4}n_{e0}^{(4)}+\epsilon^{3}N_{e0},\\
u_{i0}&=\epsilon u_{i0}^{(1)}+\epsilon^{2}u_{i0}^{(2)}+\epsilon^{3}u_{i0}^{(3)}+\epsilon^{4}u_{i0}^{(4)}+\epsilon^{3}U_{0}.\\
\end{split}
\end{equation*}
Then for any $\tau>0$, there exists $\epsilon_{0}>0$ such that if $0<\epsilon<\epsilon_{0}$, the solution of the EP system \eqref{equ2} with initial data $(n_{i0},n_{e0},u_{i0})$ can be expressed as
\begin{equation*}
\begin{split}
n_{i}&=1+\epsilon n_{i}^{(1)}+\epsilon^{2}n_{i}^{(2)}+\epsilon^{3}n_{i}^{(3)}+\epsilon^{4}n_{i}^{(4)}+\epsilon^{3}N_{i},\\
n_{e}&=1+\epsilon n_{e}^{(1)}+\epsilon^{2}n_{e}^{(2)}+\epsilon^{3}n_{e}^{(3)}+\epsilon^{4}n_{e}^{(4)}+\epsilon^{3}N_{e},\\
u_{i}&=\epsilon u_{i}^{(1)}+\epsilon^{2}u_{i}^{(2)}+\epsilon^{3}u_{i}^{(3)}+\epsilon^{4}u_{i}^{(4)}+\epsilon^{3}U,
\end{split}
\end{equation*}
such that for all $0<\epsilon<\epsilon_{0}$,
\begin{equation}\label{ineq}
\begin{split}
\sup_{[0,\tau]}\Big\{\|(N_{i},N_{e},U)\|_{H^{2}}^{2} &+\epsilon\|(\partial_{x}^{3}N_{e}, \partial_{x}^{3}U)\|_{L^{2}}^{2} +\epsilon^{2}\|\partial_{x}^{4}N_{e}\|_{L^{2}}^{2} +\epsilon^{3}\|\partial_{x}^{5}N_{e}\|_{L^{2}}^{2}\\
+\epsilon^{4}\|\partial_{x}^{6}N_{e}\|_{L^{2}}^{2} \Big\}\leq &C_{\tau}\Big(1+\|(N_{i0},N_{e0},U_{0})\|_{H^{2}}^{2} +\epsilon\|(\partial_{x}^{3}N_{e0},\partial_{x}^{3}U_{0})\|_{L^{2}}^{2}\\
&+\epsilon^{2}\|\partial_{x}^{4}N_{e0}\|_{L^{2}}^{2} +\epsilon^{3}\|\partial_{x}^{5}N_{e0}\|_{L^{2}}^{2} +\epsilon^{4}\|\partial_{x}^{6}N_{e0}\|_{L^{2}}^{2}\Big).
\end{split}
\end{equation}
\end{theorem}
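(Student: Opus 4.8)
The plan is to prove Theorem \ref{thm1} by a continuity (bootstrap) argument built on uniform energy estimates for the remainder system \eqref{rem}, measured in the $\epsilon$-dependent triple norm $|\!|\!|(N_i,N_e,U)|\!|\!|_\epsilon$ defined in \eqref{tri}. The rough structure of \eqref{rem} is that of a symmetrizable hyperbolic system for $(N_i,U)$ with transport speeds $(1-u_i)/\epsilon$, coupled through the "quasi-neutral" constraint \eqref{rem-3}, which one must read as an \emph{elliptic} relation determining $N_e$ (and its $x$-derivatives) from $N_i$ up to errors: writing \eqref{rem-3} as $(n_e/\epsilon)\partial_x^2 N_e - (H^2/4)(1/n_e)\partial_x^4 N_e = (N_e-N_i)/\epsilon^2 + \text{l.o.t.}$, one sees that each application of the constraint trades two $x$-derivatives on $N_e$ for a factor of $\epsilon$, which is exactly the bookkeeping encoded in the successive $\epsilon^k\|\partial_x^{k+2}N_e\|_{L^2}^2$ terms in \eqref{tri}. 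So the first step is to set up the energy functional $E(t)=|\!|\!|(N_i,N_e,U)(t)|\!|\!|_\epsilon^2$ and differentiate it in time.

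The key steps, in order, are as follows. \emph{(i) Low-order estimate.} Apply $\partial_x^\alpha$ for $\alpha=0,1,2$ to \eqref{rem-1} and \eqref{rem-2}, multiply by $\partial_x^\alpha N_i$ and $\partial_x^\alpha U$ respectively, and integrate. The singular transport terms $-\frac{1-u_i}{\epsilon}\partial_x N_i$ and $-\frac{1-u_i}{\epsilon}\partial_x U$ are skew-symmetric up to a commutator: integrating by parts produces $\frac1\epsilon\int \partial_x\!\big(\frac{1-u_i}{\epsilon}\big)\cdots$, and since $u_i = \epsilon u_i^{(1)}+\cdots+\epsilon^3 U$ the dangerous $1/\epsilon^2$ is killed — one is left with $O(1)$ (plus $O(\epsilon^2)\|U\|$ from the $\epsilon^3 U$ piece), which is controlled by $E$. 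The cross terms $\frac{n_i}{\epsilon}\partial_x U$ in \eqref{rem-1} and $\frac{n_e}{\epsilon}\partial_x N_e$ in \eqref{rem-2} are the ones that do \emph{not} cancel against each other directly (unlike in the classical Euler–Poisson KdV limit), because $N_e\ne N_i$; here one uses the constraint \eqref{rem-3} to write $N_e = N_i + \epsilon(\text{stuff involving }\partial_x^2 N_e,\dots)$ and the third-order dispersive term $\frac{H^2}{4n_e}\partial_x^3 N_e$ in \eqref{rem-2}, whose antisymmetric structure (after pairing with $\partial_x^2$ derivatives) is what supplies the dispersive control — this is the mechanism that makes the $\epsilon\|\partial_x^3 N_e\|_{L^2}^2$ term appear in $E$. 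Lemma \ref{L8} bounds all the $\mathcal R$-terms: the "source" ones ($\mathcal R_1,\mathcal R_2^{1,2,3},\mathcal R_3^{1,2}$) contribute $O(1)$, while $\mathcal R_2^4,\mathcal R_3^3$ contribute terms absorbable into $CE$ (modulo the small $\epsilon$). \emph{(ii) Higher-order estimates for $N_e$.} Differentiate the constraint \eqref{rem-3} in $x$ an appropriate number of times and pair with $\epsilon^j\partial_x^{j+2}N_e$; this is where the weighted norms $\epsilon^2\|\partial_x^4 N_e\|^2,\dots,\epsilon^4\|\partial_x^6 N_e\|^2$ are estimated, each level feeding on the one below with a gain of $\epsilon$. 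One also needs the corresponding high-order estimate for $U$ (the $\epsilon\|\partial_x^3 U\|^2$ term), obtained by differentiating \eqref{rem-2} three times. \emph{(iii) Assemble and close.} Summing (i)–(ii) yields a differential inequality $\frac{d}{dt}E \le C(E + 1)$ with $C$ independent of $\epsilon$, provided $E$ stays bounded (needed to control nonlinear terms like $\epsilon^2(\partial_x N_e)^2$, $\epsilon^6(\partial_x N_e)^3$, etc., and to keep $n_e$ bounded below so the $1/n_e, 1/n_e^2,\dots$ factors are harmless). Gronwall then gives $E(t)\le e^{C\tau}(E(0)+1)$ on $[0,\tau]$, which is \eqref{ineq}; a standard continuation argument upgrades this from an a priori bound to an actual bound on the maximal solution, and simultaneously confirms $\inf n_e \ge 1/2$ for $\epsilon$ small.

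The main obstacle I expect is handling the coupling between the hyperbolic block $(N_i,U)$ and the elliptic/dispersive block $N_e$ \emph{at the singular order} $1/\epsilon$: one must show that the terms $\frac{n_i}{\epsilon}\partial_x U + \frac{n_e}{\epsilon}\partial_x N_e$ and the singular pieces of \eqref{rem-3} combine, after using the constraint and integrating by parts with the $\frac{H^2}{4n_e}\partial_x^3 N_e$ dispersion, into something $O(1)$ rather than $O(1/\epsilon)$ — equivalently, that the quadratic form governing $\frac{d}{dt}E$ has the right sign/boundedness structure uniformly in $\epsilon$. This is delicate because the $H$-dependent coefficient $\tfrac12(1-\tfrac{H^2}{4})$ of the dispersion can be of either sign (and vanishes at $H=2$), so the argument must not rely on dispersive smoothing but only on the \emph{antisymmetry} of the odd-order operators; at $H=2$ the $\partial_x^3$ and $\partial_x^4$ terms in \eqref{rem-2}–\eqref{rem-3} drop out and one is left with a genuinely hyperbolic remainder system, so the same energy scheme must (and does) degenerate gracefully — this is why the statement covers $H=2$ as well, with the understanding that the underlying limit equation is then Burgers and the time of existence $\tau$ is restricted as in Theorem \ref{thm2''}. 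A secondary but routine difficulty is the sheer number of nonlinear terms in \eqref{rem}: each must be checked to carry enough powers of $\epsilon$ to be absorbed, which is exactly what the placement of $\epsilon$-weights in \eqref{tri} is engineered to make work, using that $H^1(\mathbb R)$ is a Banach algebra together with Lemma \ref{L8}.
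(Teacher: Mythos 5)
Your overall framework (a priori assumption, the triple norm \eqref{|||}, Gronwall, bootstrap, and an elliptic use of \eqref{rem-3}) matches the paper's, but the step that actually carries the proof is missing, and the mechanism you propose in its place does not close. You plan to test $\partial_x^\alpha$ of \eqref{rem-1} with $\partial_x^\alpha N_i$ and $\partial_x^\alpha$ of \eqref{rem-2} with $\partial_x^\alpha U$, and to resolve the singular coupling by writing $N_e=N_i+\epsilon(\cdots)$ and invoking the antisymmetry of the $\frac{H^2}{4n_e}\partial_x^3N_e$ term. Carry this out at the top unweighted order $\alpha=2$ (already in the constant-coefficient model): the two singular cross terms combine, after integration by parts, into $\frac1\epsilon\int\partial_x^3U\,\partial_x^2(N_i-N_e)+\cdots$, and substituting $N_i-N_e=-\epsilon n_e\partial_x^2N_e+\frac{\epsilon^2H^2}{4}\frac{\partial_x^4N_e}{n_e}+\cdots$ from \eqref{rem-3} leaves, even after adding the dispersive contribution of \eqref{rem-2}, residuals of the form $c_1\int\partial_x^3U\,\partial_x^4N_e$ and $c_2\,\epsilon\int\partial_x^3U\,\partial_x^6N_e$. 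These are neither exact time derivatives nor absorbable: the norm \eqref{|||} only controls $\epsilon^{1/2}\|\partial_x^3U\|$, $\epsilon\|\partial_x^4N_e\|$ and $\epsilon^2\|\partial_x^6N_e\|$, so both terms cost a factor $\epsilon^{-3/2}$, and Lemma \ref{L1} cannot help since it bounds exactly the same weighted quantities by $\|\partial_x^2N_i\|$. Nor do they vanish in general (the coefficients involve $1-\frac{H^2}{4}$ and the weight mismatch $n_e^2$ versus $1$). Hence your claimed inequality $\frac{d}{dt}E\le C(E+1)$ with $C$ independent of $\epsilon$ is not obtained; the obstacle you merely flag as expected is the entire content of the proof, and the route you sketch for it fails at the top order.

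What the paper does instead is structurally different at precisely this point. It never tests \eqref{rem-1} against $N_i$: $N_i$ is excluded from the evolving norm \eqref{|||} and is recovered only at the end from $N_e$ through the elliptic Lemma \ref{L1} (the opposite direction from your step (ii)). The energy identity comes solely from testing $\partial_x^\gamma$ of \eqref{rem-2} with $\partial_x^\gamma U$ (with weight $\epsilon$ at $\gamma=3$); the dangerous term is integrated by parts onto $\partial_x^{\gamma+1}U$, which is then replaced via \eqref{rem-1} --- bringing in $\epsilon\partial_t\partial_x^\gamma N_i$ --- and that time derivative is in turn rewritten using the time-differentiated constraint \eqref{rem-3}. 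It is this double substitution (the terms $K_{11},K_{12}$, Lemmas \ref{Lem-u1} and \ref{Lem-u2}) that produces exact time derivatives of the weighted energies $\int\frac{n_e}{n_i}(\partial_x^\gamma N_e)^2+\epsilon\int\frac{n_e^2}{n_i}(\partial_x^{\gamma+1}N_e)^2+\frac{\epsilon^2H^2}{4}\int\frac{1}{n_i}(\partial_x^{\gamma+2}N_e)^2+\cdots$; the higher weighted norms enter the energy through this mechanism, not through the antisymmetry of the dispersive term, which is why the argument is insensitive to the sign (or vanishing) of $1-\frac{H^2}{4}$. Making this work also requires the auxiliary bounds on $\epsilon\partial_tN_i$ and on $\partial_t\partial_x^k N_e$ (Lemmas \ref{L2} and \ref{L3}) to handle time derivatives falling on the coefficients $\frac{n_e}{n_i}$, $\frac{1}{n_en_i}$, etc.; your outline has no analogue of these. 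To repair your proposal you would have to reproduce this substitution structure (or find another way to convert the top-order singular couplings into time derivatives); estimating them directly against the triple norm is not possible.
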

From \eqref{ineq}, we see that the $H^2$-norm of the remainder $(N_i,N_e,U)$ is bounded uniformly in $\epsilon$. Note also the Gardner-Morikawa transform \eqref{equ39}, we see that
\begin{equation}
\sup_{[0,\epsilon^{-3/2}\tau]}
\left\|\left(
\begin{array}{c}
(n_{i}-1)/\epsilon\\ (n_{e}-1)/\epsilon\\ u_{i}/\epsilon
\end{array}
\right)-KdV\right\|_{H^2}\leq C\epsilon,
\end{equation}
for some $C>0$ independent of $\epsilon>0$. Here `KdV' is the equation satisfied by the first approximation $(n_i^{(1)},n_e^{(1)},u_i^{(1)})$.

The basic plan is to first estimate some uniform bound for $(N_{e},U)$ and then recover the estimate for $N_{i}$ from the estimate of $N_{e}$ by the equation \eqref{rem}. We want to apply the Gronwall lemma to complete the proof. To state clearly, we first introduce
\begin{equation}\label{|||}
\begin{split}
|\!|\!|(N_{e},U)|\!|\!|_{\epsilon}^{2}=&\|(N_{e},U)\|_{H^{2}}^{2} +\epsilon\|(\partial_{x}^{3}N_{e},\partial_{x}^{3}U)\|_{L^{2}}^{2} +\epsilon^{2}\|\partial_{x}^{4}N_{e}\|_{L^{2}}^{2} +\epsilon^{3}\|\partial_{x}^{5}N_{e}\|_{L^{2}}^{2} +\epsilon^{4}\|\partial_{x}^{6}N_{e}\|_{L^{2}}^{2}.\
\end{split}
\end{equation}
As we will see, the zeroth order, the first order to the second order estimates for $(N_{e},U)$ and the third order estimates for $\epsilon(N_{e},U)$ all can be controlled in terms of $|\!|\!|(N_{e},U)|\!|\!|_{\epsilon}^{2}$.

For convenience, we introduce the following lemma
\begin{lemma}[Commutator Estimate]\label{L9}
Let $m\geq1$ be an integer, and then the commutator which is defined by the following
\begin{equation}
\begin{split}\label{w}
[\nabla^{m},f]g:=\nabla^{m}(fg)-f\nabla^{m}g,
\end{split}
\end{equation}
can be bounded by
\begin{equation}
\begin{split}\label{w11}
\|[\nabla^{m},f]g\|_{L^{p}}\leq \|\nabla f\|_{L^{p_{1}}}\|\nabla^{m-1}g\|_{L^{p_{2}}}+\|\nabla^{m} f\|_{L^{p_{3}}}\|g\|_{L^{p_{4}}},
\end{split}
\end{equation}
where $p,p_{2},p_{3}\in(1,\infty)$  and
\begin{equation*}
\begin{split}
\frac{1}{p}=\frac{1}{p_{1}}+\frac{1}{p_{2}}=\frac{1}{p_{3}}+\frac{1}{p_{_{_{4}}}}.
\end{split}
\end{equation*}
\end{lemma}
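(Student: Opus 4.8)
The plan is to prove this Moser-type commutator estimate by expanding the commutator with the Leibniz rule, reducing the problem to bounding products of intermediate-order derivatives, and then controlling each such product by interpolating its two factors between their extreme orders via the Gagliardo--Nirenberg inequality. First I would write, by the Leibniz rule,
\begin{equation*}
\nabla^{m}(fg)=\sum_{k=0}^{m}\binom{m}{k}\nabla^{k}f\,\nabla^{m-k}g,
\end{equation*}
so that the $k=0$ term $f\nabla^{m}g$ cancels exactly against the subtracted term in the definition \eqref{w}, leaving
\begin{equation*}
[\nabla^{m},f]g=\sum_{k=1}^{m}\binom{m}{k}\nabla^{k}f\,\nabla^{m-k}g.
\end{equation*}
The case $m=1$ is immediate: the sum is the single term $(\nabla f)g$, bounded by the first term on the right of \eqref{w11} through Hölder's inequality with $1/p=1/p_{1}+1/p_{2}$ (note $\nabla^{m-1}g=g$ there). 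So I would assume $m\geq2$ in the remaining analysis.

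For each $1\leq k\leq m$ I would apply Hölder's inequality,
\begin{equation*}
\|\nabla^{k}f\,\nabla^{m-k}g\|_{L^{p}}\leq\|\nabla^{k}f\|_{L^{a_{k}}}\|\nabla^{m-k}g\|_{L^{b_{k}}},\qquad \frac{1}{a_{k}}+\frac{1}{b_{k}}=\frac{1}{p},
\end{equation*}
with intermediate exponents $a_{k},b_{k}$ to be fixed by interpolation. Setting $\theta=\frac{k-1}{m-1}\in[0,1]$, I would interpolate the first factor between orders $1$ and $m$, and the second between orders $0$ and $m-1$:
\begin{align*}
\|\nabla^{k}f\|_{L^{a_{k}}}&\leq C\|\nabla^{m}f\|_{L^{p_{3}}}^{\theta}\|\nabla f\|_{L^{p_{1}}}^{1-\theta},\\
\|\nabla^{m-k}g\|_{L^{b_{k}}}&\leq C\|g\|_{L^{p_{4}}}^{\theta}\|\nabla^{m-1}g\|_{L^{p_{2}}}^{1-\theta}.
\end{align*}
Multiplying and applying the weighted inequality $X^{\theta}Y^{1-\theta}\leq X+Y$ with $X=\|\nabla^{m}f\|_{L^{p_{3}}}\|g\|_{L^{p_{4}}}$ and $Y=\|\nabla f\|_{L^{p_{1}}}\|\nabla^{m-1}g\|_{L^{p_{2}}}$ reproduces precisely the two terms on the right of \eqref{w11}; summing over $k$ and absorbing the binomial coefficients into the constant finishes the argument.

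The main point to verify — and what I expect to be the only real obstacle — is that the exponents $a_{k},b_{k}$ forced by the Gagliardo--Nirenberg scaling relations are mutually consistent with the Hölder constraint $1/a_{k}+1/b_{k}=1/p$. I would carry this out by recording the scaling identities
\begin{equation*}
\frac{1}{a_{k}}=\frac{\theta}{p_{3}}+\frac{1-\theta}{p_{1}},\qquad \frac{1}{b_{k}}=\frac{\theta}{p_{4}}+\frac{1-\theta}{p_{2}},
\end{equation*}
where the homogeneity (derivative-counting) terms $\frac{k-\theta m-(1-\theta)}{n}$ for $f$ and $\frac{(m-k)-(1-\theta)(m-1)}{n}$ for $g$ both vanish exactly because $\theta=\frac{k-1}{m-1}$. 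Adding these gives
\begin{equation*}
\frac{1}{a_{k}}+\frac{1}{b_{k}}=\theta\Big(\frac{1}{p_{3}}+\frac{1}{p_{4}}\Big)+(1-\theta)\Big(\frac{1}{p_{1}}+\frac{1}{p_{2}}\Big)=\frac{1}{p},
\end{equation*}
using the hypotheses $1/p=1/p_{1}+1/p_{2}=1/p_{3}+1/p_{4}$. The restriction $p,p_{2},p_{3}\in(1,\infty)$ is exactly what guarantees the Gagliardo--Nirenberg inequalities in the non-endpoint regime (equivalently, the $L^{p}$-boundedness of the relevant Riesz-type operators), so no borderline exceptional cases arise and the two interpolation inequalities above are legitimate. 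With the Hölder compatibility established, the estimate closes as described.
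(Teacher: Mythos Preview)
Your proof is correct and follows the standard Kato--Ponce/Moser route: Leibniz expansion, H\"older on each term, then Gagliardo--Nirenberg interpolation with the parameter $\theta=(k-1)/(m-1)$ chosen so that the derivative-counting terms vanish and the H\"older exponents close up. The verification that $1/a_k+1/b_k=1/p$ is exactly the computation that makes this work, and you carried it out cleanly.

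The paper, by contrast, does not prove the lemma at all: its entire proof reads ``The proof can be found in \cite{CM86,Kato90,KP88,KPV91,KPV93}, for example.'' So you have supplied substantially more than the paper does. Your argument is precisely the one found in those references (especially Kato--Ponce \cite{KP88}), so there is no methodological divergence to discuss---you have simply written out what the paper leaves to citation. One small caveat worth noting for completeness: the Gagliardo--Nirenberg step tacitly requires the intermediate exponents $a_k,b_k$ to lie in the admissible range, and the hypothesis constrains only $p,p_2,p_3\in(1,\infty)$ while $p_1,p_4$ may be endpoints; this is harmless here (the endpoint cases $k=1$ and $k=m$ need no interpolation, and for $1<k<m$ the convex combinations keep $a_k,b_k$ strictly between the extremes), but it is the one place where a careful write-up would pause.
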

\begin{proof}
The proof can be found in \cite{CM86,Kato90,KP88,KPV91,KPV93}, for example.
\end{proof}

\section{Uniform energy estimates}\label{sect-energy}
In this section, we give the energy estimates uniformly in $\epsilon$ for the remainder $(N_{e},N_{i},U)$, which requires a combination of energy method and analysis of the remainder equation \eqref{rem}. To simplify the proof slightly, we assume that \eqref{rem} has smooth solutions in $[0,\tau_{\epsilon}]$ for                    $\tau_{\epsilon}>0$ depending on $\epsilon$.
Let $\tilde{C}$ be a constant independent of $\epsilon$, which will be determined later, much larger than the bound $|\!|\!|(N_{e},U)(0)|\!|\!|_{\epsilon}^{2}$ of the initial data. It is classical that there exists $\tau_{\epsilon}>0$ such that on $[0,\tau_{\epsilon}]$,
\begin{equation}
\begin{split}\label{priori}
\|N_{i}\|_{H^{2}}^{2},\ \ |\!|\!|(N_{e},U)|\!|\!|_{\epsilon}^{2}\leq\tilde{C}.
\end{split}
\end{equation}
As a direct corollary, there exists some $\epsilon_{1}>0$ such that $n_{e} \ and  \ n_{i}$ are bounded from above and below, say $\frac{1}{2}<n_{i},n_{e}<\frac{3}{2}$ and $u_{i}$ is bounded by $|u_{i}|<\frac{1}{2}$ when $\epsilon<\epsilon_{1}$. Since $\mathcal{R}_{2}^{4},\mathcal{R}_{3}^{3}$ are smooth functions of $N_{e}$, there exists some constant $C_{1}=C_{1}(\epsilon\tilde{C})$ for any $\alpha,\beta\geq0$ such that
\begin{equation*}
\begin{split}
\left|\partial_{n_{e}^{(j)}}^{\alpha} \partial_{N_{e}}^{\beta}(\mathcal{R}_{2}^{4}, \mathcal{R}_{3}^{3})\right|\leq C_{1}=C_{1}(\epsilon\tilde{C}),
\end{split}
\end{equation*}
where $C_{1}(\cdot)$ can be chosen to be nondecreasing in its argument. We will show that for any given $\tau>0$ there is some $\epsilon_{0}>0$, such that the existence time $\tau_{\epsilon}>\tau$ for any $0<\epsilon<\epsilon_{0}$.

The purpose of this section is to prove Proposition \ref{P1-P2} and \ref{P3}. Since the proof of Proposition \ref{P1-P2} will be almost the same to that of Proposition \ref{P3}, we will omit the proof of Proposition \ref{P1-P2}. In Subsection \ref{2.1}, we first show three lemmas that will be frequently used later. In Subsection \ref{2.2} and Subsection \ref{2.3}, we present and prove the two main propositions, while estimates of some crucial terms are postponed to Subsection \ref{2.4} and Subsection \ref{2.5}.

\subsection{Basic estimates}\label{2.1}
We first prove the following Lemma \ref{L1}-\ref{L3}, in which we bound $N_{i}$ and $\partial_{t}N_{e}$ in terms of $N_{e}$.
\begin{lemma}\label{L1}
Let $(N_{i},N_{e},U)$ be a solution to \eqref{rem} and $\alpha\geq0$ be an integer. There exist some constants $0<\epsilon_{1}<1$ and $C_{1}=C_{1}(\epsilon\tilde{C})$ such that for every $0<\epsilon<\epsilon_{1}$,
\begin{equation}\label{eqaL1}
\begin{split}
C_{1}^{-1}\|\partial_{x}^{\alpha}N_{i}\|^{2} \leq &\|\partial_{x}^{\alpha}N_{e}\|^{2}
+\epsilon\|\partial_{x}^{\alpha+1}N_{e}\|^{2}+\epsilon^{2}\|\partial_{x}^{\alpha+2}N_{e}\|^{2}\\
&+\epsilon^{3}\|\partial_{x}^{\alpha+3}N_{e}\|^{2}+\epsilon^{4}\|\partial_{x}^{\alpha+4}N_{e}\|^{2}
\leq C_{1}\|\partial_{x}^{\alpha}N_{i}\|^{2}.
\end{split}
\end{equation}
\end{lemma}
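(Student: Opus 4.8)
The plan is to read off the inequality directly from the Poisson-type equation \eqref{rem-3}, which already expresses the combination $(N_e-N_i)/\epsilon^2$ in terms of $N_e$ and its $x$-derivatives up to order four. Multiplying \eqref{rem-3} through by $\epsilon^2$, we isolate $N_i$ in the form $N_i = N_e - \epsilon^2(\text{stuff involving } N_e, \partial_x N_e,\ldots,\partial_x^4 N_e)$. The ``stuff'' contains, on the one hand, the quasineutral/pressure terms $2\partial_x\tilde n_e\,\partial_x N_e + \epsilon^2(\partial_x N_e)^2 + \tfrac{n_e}{\epsilon}\partial_x^2 N_e + \partial_x^2\tilde n_e\, N_e + \mathcal R_3^1$, and on the other hand the bracketed $H^2/4$ term, whose top-order contribution is $\tfrac1{n_e}\partial_x^4 N_e$ while all remaining terms carry at least one factor of $\epsilon$ and at most three $x$-derivatives on $N_e$. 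Applying $\partial_x^\alpha$ to this identity and taking $L^2$ norms, I would estimate each term.

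The key bookkeeping step is to check that, after multiplying by $\epsilon^2$ and differentiating $\alpha$ times, every term on the right is controlled by the right-hand side of \eqref{eqaL1}. The genuinely dangerous terms are the highest-order ones: $\epsilon^2\cdot\tfrac{n_e}{\epsilon}\partial_x^{\alpha+2}N_e = \epsilon\, n_e\,\partial_x^{\alpha+2}N_e$, which matches the $\epsilon^2\|\partial_x^{\alpha+2}N_e\|^2$ slot (indeed it is even better, giving $\epsilon$ not $\epsilon^2$), and $\tfrac{\epsilon^2 H^2}{4n_e}\partial_x^{\alpha+4}N_e$, which matches the $\epsilon^4\|\partial_x^{\alpha+4}N_e\|^2$ slot. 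The intermediate terms, e.g. $\epsilon^2\cdot\tfrac{H^2}{4}\cdot\tfrac{3\epsilon}{n_e^2}\partial_x\tilde n_e\,\partial_x^3 N_e$-type expressions, all carry at least $\epsilon^3$ in front of a derivative of order $\le \alpha+3$, hence are bounded by the $\epsilon^3\|\partial_x^{\alpha+3}N_e\|^2$ and lower slots; the products $(\partial_x N_e)^2$, $(\partial_x N_e)^3$, $(\partial_x N_e)^4$, $\partial_x N_e\,\partial_x^2 N_e$, etc., are handled by the algebra property of $H^1$ together with the a priori bound \eqref{priori}, absorbing the extra factor into $C_1$. The coefficients $\partial_x^j\tilde n_e$, $1/n_e$, and the smooth functions $\mathcal R_3^1$, $\mathcal R_3^3$ are all bounded in every $H^\alpha$ by Lemma~\ref{L8} and by the lower/upper bounds $\tfrac12<n_e<\tfrac32$; their derivatives produce only lower-order $N_e$ factors via the Leibniz rule, hence are subsumed. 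This gives the upper bound on $\|\partial_x^\alpha N_e\|^2 + \epsilon\|\partial_x^{\alpha+1}N_e\|^2 + \cdots + \epsilon^4\|\partial_x^{\alpha+4}N_e\|^2$ by $C_1\|\partial_x^\alpha N_i\|^2$ — wait, that is the wrong direction, so in fact I must argue the other way: I would instead \emph{solve} \eqref{rem-3} the opposite way, viewing the top-order term $\tfrac{\epsilon^2 H^2}{4n_e}\partial_x^{\alpha+4}N_e$ (or $\tfrac{\epsilon\, n_e}{1}\partial_x^{\alpha+2}N_e$ in the classical-dominated range) as the principal part and everything else, including $N_i$, as forcing, and invoke elliptic-type regularity/interpolation to gain back the full weighted norm.

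Concretely, for the upper bound on the weighted $N_e$-norm I would use an elliptic bootstrap on \eqref{rem-3}: write it schematically as $\tfrac{n_e}{\epsilon}\partial_x^2 N_e + \tfrac{\epsilon H^2}{4 n_e}\partial_x^4 N_e = \tfrac{N_e-N_i}{\epsilon^2} + (\text{lower order})$, multiply by appropriate powers of $\epsilon$, apply $\partial_x^\alpha$, pair with $\partial_x^\alpha N_e$, integrate by parts to produce $\tfrac{n_e}{\epsilon}\|\partial_x^{\alpha+1}N_e\|^2 + \tfrac{\epsilon H^2}{4n_e}\|\partial_x^{\alpha+2}N_e\|^2$ on the left (up to commutator terms controlled by Lemma~\ref{L9} and the a priori bound), and absorb; iterating this two more times climbs from $\partial_x^{\alpha+2}$ to $\partial_x^{\alpha+3}$ to $\partial_x^{\alpha+4}$ with the stated $\epsilon$-weights. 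The lower bound $C_1^{-1}\|\partial_x^\alpha N_i\|^2 \le (\text{weighted } N_e\text{-norm})$ is then immediate from $N_i = N_e - \epsilon^2(\cdots)$ and the triangle inequality, again using $H^1$-algebra and \eqref{priori} to bound the nonlinear and coefficient terms. The main obstacle is organizing the two-sided argument cleanly: one direction is a trivial substitution, but the other requires treating \eqref{rem-3} as a (degenerate, $\epsilon$-weighted) elliptic equation for $N_e$ and carefully tracking which of the two terms $\tfrac{n_e}\epsilon\partial_x^2 N_e$ and $\tfrac{\epsilon H^2}{4n_e}\partial_x^4 N_e$ dominates at each derivative level so that the $\epsilon$-powers in \eqref{eqaL1} come out exactly right; the numerous $H^2/4$ terms must each be checked to carry a surplus power of $\epsilon$ relative to its derivative order, which is the bookkeeping heart of the proof.
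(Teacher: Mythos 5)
Your proposal follows essentially the same route as the paper: the paper likewise treats \eqref{rem-3} as an $\epsilon$-weighted elliptic equation for $N_e$ with $N_i$ as forcing, pairing $\partial_x^{\alpha}$ of it with $\partial_x^{\alpha}N_e$, $\epsilon\partial_x^{\alpha+2}N_e$ and $\epsilon^{2}\partial_x^{\alpha+4}N_e$ to produce exactly the weighted norm in \eqref{eqaL1} on the left, controlling all remaining terms by \eqref{priori}, Sobolev embedding and Lemma \ref{L8}, and obtaining the reverse inequality by the direct substitution you describe. One small caution: in your schematic principal part the fourth-order term must carry the opposite sign relative to $\frac{n_e}{\epsilon}\partial_x^{2}N_e$ (it enters \eqref{rem-3} as $-\frac{H^{2}}{4}\frac{\partial_x^{4}N_e}{n_e}$), which is precisely why both contributions become positive after integrating by parts against $N_e$.
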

\begin{proof}
When $\alpha=0$, taking inner product of (1.17c) with $N_{e}$ and integration by parts, we have
\begin{equation}
\begin{split}\label{3.1}
\|N_{e}&\|^{2}+\epsilon\|\partial_{x}N_{e}\|^{2}+\frac{\epsilon^{2}H^{2}}{4}\int\frac{1}{n_{e}}(\partial_{x}^{2}N_{e})^{2}\\
=&\epsilon^{2}\int(\tilde{n_{e}}+\epsilon^{2}N_{e})(\partial_{x}^{2}N_{e})N_{e}
-\frac{\epsilon^{2}H^{2}}{2}\int(\partial_{x}\frac{1}{n_{e}})\partial_{x}^{2}N_{e}\partial_{x}N_{e}
-\frac{\epsilon^{2}H^{2}}{4}\int(\partial_{x}^{2}\frac{1}{n_{e}})\partial_{x}^{2}N_{e}N_{e}\\
&+2\epsilon^{2}\int\partial_{x}\tilde{n_{e}}(\partial_{x}N_{e})N_{e}
+\epsilon^{4}\int(\partial_{x}N_{e})^{2}N_{e}
+\epsilon^{2}\int\partial_{x}^{2}\tilde{n_{e}}N_{e}^{2}
+\epsilon^{2}\int \mathcal{R}_{3}^{1}N_{e}\\
&-\frac{H^{2}}{4}\Big\{-12\epsilon^{5}\int\frac{(\partial_{x}\tilde{n_{e}})^{3}}{n_{e}^{4}}(\partial_{x}N_{e})N_{e}
+14\epsilon^{4}\int\frac{\partial_{x}\tilde{n_{e}}\partial_{x}^{2}\tilde{n_{e}}}{n_{e}^{3}}(\partial_{x}N_{e})N_{e}\\
&-3\epsilon^{3}\int\frac{\partial_{x}^{3}\tilde{n_{e}}}{n_{e}^{2}}(\partial_{x}N_{e})N_{e}
-18\epsilon^{7}\int\frac{(\partial_{x}\tilde{n_{e}})^{2}}{n_{e}^{4}}(\partial_{x}N_{e})^{2}N_{e}
+7\epsilon^{4}\int\frac{(\partial_{x}\tilde{n_{e}})^{2}}{n_{e}^{3}}(\partial_{x}^{2}N_{e})N_{e}\\
&+7\epsilon^{6}\int\frac{\partial_{x}^{2}\tilde{n_{e}}}{n_{e}^{3}}(\partial_{x}N_{e})^{2}N_{e}
-4\epsilon^{3}\int\frac{\partial_{x}^{2}\tilde{n_{e}}}{n_{e}^{2}}(\partial_{x}^{2}N_{e})N_{e}
-12\epsilon^{9}\int\frac{\partial_{x}\tilde{n_{e}}}{n_{e}^{4}}(\partial_{x}N_{e})^{3}N_{e}\\
&+14\epsilon^{6}\int\frac{\partial_{x}\tilde{n_{e}}}{n_{e}^{3}}\partial_{x}N_{e}(\partial_{x}^{2}N_{e})N_{e}
-3\epsilon^{3}\int\frac{\partial_{x}\tilde{n_{e}}}{n_{e}^{2}}(\partial_{x}^{3}N_{e})N_{e}
-3\epsilon^{11}\int\frac{1}{n_{e}^{4}}(\partial_{x}N_{e})^{4}N_{e}\\
&+7\epsilon^{8}\int\frac{1}{n_{e}^{3}}(\partial_{x}N_{e})^{2}(\partial_{x}^{2}N_{e})N_{e}
-2\epsilon^{5}\int\frac{1}{n_{e}^{2}}(\partial_{x}^{2}N_{e})^{2}N_{e}
-3\epsilon^{5}\int\frac{1}{n_{e}^{2}}\partial_{x}N_{e}(\partial_{x}^{3}N_{e})N_{e}\\
&+\epsilon^{2}\int\frac{\mathcal{R}_{3}^{2}+\mathcal{R}_{3}^{3}}{n_{e}^{4}}N_{e}\Big\}
+\int{N_{e}N_{i}}\\
=&:\sum_{i=1}^{23}A_{i} \ .
\end{split}
\end{equation}
Since $\frac{1}{2}<n_{e}<\frac{3}{2}$ and $H$ is a fixed constant, there exists a fixed constant $C$ such that
\begin{equation*}
\begin{split}
\frac{\epsilon^{2}H^{2}}{4}\int\frac{1}{n_{e}}(\partial_{x}^{2}N_{e})^{2}
\geq C\epsilon^{2}\|\partial_{x}^{2}N_{e}\|^{2}.
\end{split}
\end{equation*}
Thus the LHS of \eqref{3.1} is equal or greater than $C(\|N_{e}\|^{2}+\epsilon\|\partial_{x}N_{e}\|^{2} +\epsilon^{2}\|\partial_{x}^{2}N_{e}\|^{2})$. Next, we estimate the RHS of \eqref{3.1}. For $A_{1}$, since $\tilde{n_{e}}$ is known and bounded in $L^{\infty}$, there exists some constant $C$ such that
\begin{equation*}
\begin{split}
A_{1}&=\epsilon^{2}\int(\tilde{n_{e}}+\epsilon^{2}N_{e})(\partial_{x}^{2}N_{e})N_{e}\\
&\leq C(1+\epsilon^{2}\|N_{e}\|_{L^{\infty}})(\epsilon\|N_{e}\|^{2}+\epsilon^{3}\|\partial_{x}^{2}N_{e}\|^{2})\\
&\leq C(1+\epsilon^{2}\|N_{e}\|_{H^{1}})(\epsilon\|N_{e}\|^{2}+\epsilon^{3}\|\partial_{x}^{2}N_{e}\|^{2})\\
&\leq C(1+\epsilon^{2}\tilde{C})(\epsilon\|N_{e}\|^{2}+\epsilon^{3}\|\partial_{x}^{2}N_{e}\|^{2})\\
&\leq C(\epsilon\|N_{e}\|^{2}+\epsilon^{3}\|\partial_{x}^{2}N_{e}\|^{2}),
\end{split}
\end{equation*}
where we have used H\"older's inequality, Sobolev embedding $H^1\hookrightarrow L^{\infty}$, the priori assumption \eqref{priori} and Cauchy inequality.

Note that
\begin{equation}
\begin{split}\label{one order}
\left|\partial_{x}\left(\frac{1}{n_{e}}\right)\right|
\leq C\left(\epsilon|\partial_{x}\tilde{n_{e}}| +\epsilon^{3}|\partial_{x}N_{e}|\right),
\end{split}
\end{equation}
and
\begin{equation}
\begin{split}\label{two orders}
\left|\partial_{x}^{2}\left(\frac{1}{n_{e}}\right)\right|
\leq C\big(\epsilon+\epsilon^{3}(|\partial_{x}N_{e}|+|\partial_{x}^{2}N_{e}|)+\epsilon^{6}|\partial_{x}N_{e}|^{2}\big).
\end{split}
\end{equation}
Since $\partial_{x}\tilde{n_{e}},\ \partial_{x}^{2}\tilde{n_{e}}$ are bounded in $L^{\infty}$, similar to $A_{1}$, we have
\begin{equation*}
\begin{split}
A_{2\sim16,18\sim20}\leq C_{1}(\epsilon\|N_{e}\|^{2}+\epsilon^{2}\|\partial_{x}N_{e}\|^{2}+\epsilon^{3}\|\partial_{x}^{2}N_{e}\|^{2}).
\end{split}
\end{equation*}
Now we estimate $A_{17}$.
By integration by parts, we obtain
\begin{equation*}
\begin{split}
A_{17}&=-\frac{3\epsilon^{3}H^{2}}{4}\int(\partial_{x}\frac{\partial_{x}\tilde{n_{e}}}{n_{e}^{2}})\partial_{x}^{2}N_{e}N_{e}
-\frac{3\epsilon^{3}H^{2}}{4}\int\frac{\partial_{x}\tilde{n_{e}}}{n_{e}^{2}}\partial_{x}^{2}N_{e}\partial_{x}N_{e}.
\end{split}
\end{equation*}
Similar to \eqref{one order}, we have
\begin{equation}
\begin{split}\label{g1}
\left|\partial_{x}\left(\frac{\partial_{x}\tilde{n_{e}}}{n_{e}^{2}}\right)\right|
\leq C(1+\epsilon^{3}|\partial_{x}N_{e}|).
\end{split}
\end{equation}
Similar to $A_{1}$, by applying H\"older's inequality, Sobolev embedding $H^1\hookrightarrow L^{\infty}$, the priori assumption \eqref{priori} and Cauchy inequality again, we have
\begin{equation*}
\begin{split}
A_{17}\leq C_{1}(\epsilon\|N_{e}\|^{2}+\epsilon^{2}\|\partial_{x}N_{e}\|^{2}+\epsilon^{3}\|\partial_{x}^{2}N_{e}\|^{2}).
\end{split}
\end{equation*}
The term $A_{21}$ can be similarly bounded by
\begin{equation*}
\begin{split}
A_{21}\leq C_{1}(\epsilon^{2}\|\partial_{x}N_{e}\|^{2}+\epsilon^{3}\|\partial_{x}^{2}N_{e}\|^{2}).
\end{split}
\end{equation*}
According to the form of $\mathcal{R}_{3}^{2} \ and  \ \mathcal{R}_{3}^{3}$ in (1.18), by applying Cauchy inequality, we have
\[
A_{22}\leq C_{1}\| N_{e}\|^{2}.
\]
By Young inequality, we have
\[
\int N_{e}N_{i}\leq\delta\|N_{e}\|^{2}+C_{\delta}\|N_{i}\|^{2},
\]
for arbitrary $\delta>0$.
Hence, there exists some $\epsilon_1>0$ such that for $0<\epsilon<\epsilon_1$,
\begin{equation}
\begin{split}\label{31}
\|N_{e}\|^{2}+\epsilon\|\partial_{x}N_{e}\|^{2}+\epsilon^{2}\|\partial_{x}^{2}N_{e}\|^{2}\leq C_{1}\|N_{i}\|^{2}.
\end{split}
\end{equation}
Taking inner product of (1.17c) with $\epsilon \partial_{x}^{2}N_{e}$ and $\epsilon^{2}\partial_{x}^{4}N_{e}$, applying H\"older inequality and integration by parts, we have similarly
\begin{equation}
\begin{split}\label{32}
\epsilon\|\partial_{x}N_{e}\|^{2}+\epsilon^{2}\|\partial_{x}^{2}N_{e}\|^{2}+\epsilon^{3}\|\partial_{x}^{3}N_{e}\|^{2}
\leq C_{1}\|N_{i}\|^{2},
\end{split}
\end{equation}
and
\begin{equation}
\begin{split}\label{33}
\epsilon^{2}\|\partial_{x}^{2}N_{e}\|^{2}+\epsilon^{3}\|\partial_{x}^{3}N_{e}\|^{2}+\epsilon^{4}\|\partial_{x}^{4}N_{e}\|^{2}
\leq C_{1}\|N_{i}\|^{2}.
\end{split}
\end{equation}
By the estimates \eqref{31},\ \eqref{32}\ and\ \eqref{33}, we obtain
\begin{equation}
\begin{split}\label{40}
\|N_{e}\|^{2}+\epsilon\|\partial_{x}N_{e}\|^{2}+\epsilon^{2}\|\partial_{x}^{2}N_{e}\|^{2}+\epsilon^{3}\|\partial_{x}^{3}N_{e}\|^{2}
+\epsilon^{4}\|\partial_{x}^{4}N_{e}\|^{2}
\leq C_{1}\|N_{i}\|^{2}.
\end{split}
\end{equation}
On the other hand, from the equation \eqref{rem-3}, there exist some $C$ such that
\begin{equation}
\begin{split}\label{34}
\|N_{i}\|^{2}\leq C_{1}(\|N_{e}\|^{2}+\epsilon\|\partial_{x}N_{e}\|^{2}
+\epsilon^{2}\|\partial_{x}^{2}N_{e}\|^{2}+\epsilon^{3}\|\partial_{x}^{3}N_{e}\|^{2}
+\epsilon^{4}\|\partial_{x}^{4}N_{e}\|^{2}).
\end{split}
\end{equation}
Putting \eqref{31}-\eqref{34} together, we deduce the inequality for $\alpha=0$.

For higher order inequalities, we differentiate \eqref{rem-3} with $\partial_{x}^{\alpha}$ and then take inner product with $\partial_{x}^{\alpha}N_{e}$, $\epsilon\partial_{x}^{\alpha+2}N_{e}$\  and\  $\epsilon^{2}\partial_{x}^{\alpha+4}N_{e}$ separately. The Lemma then follows by the same procedure of the case $\alpha=0$.
\end{proof}

Recall $|\!|\!|(N_{e},U)|\!|\!|_{\epsilon}$ in \eqref{|||}. We remark that only $\|N_{i}\|_{H^{2}}$ can be bounded in terms of $|\!|\!|(N_{e},U)|\!|\!|_{\epsilon}$ and no higher order derivatives of $N_{i}$ are allowed in Lemma \ref{L1}. In fact, we only need $0\leq\alpha\leq2$ in Lemma \ref{L1}.\\

\begin{lemma}\label{L2}
 Let $(N_{i},N_{e},U)$ be a solution to \eqref{rem}. There exist some constants $C$ and $C_{1}=C_{1}(\epsilon\tilde{C})$ such that
\begin{equation}\label{eqaL21}
\begin{split}
\|\epsilon\partial_{t}N_{i}\|^{2}\leq & C\big(\|N_{e}\|_{H^{1}}^{2}+\|U\|_{H^{1}}^{2} +\epsilon\|\partial_{x}^{2}N_{e}\|^{2} +\epsilon^{2}\|\partial_{x}^{3}N_{e}\|^{2}\\
&+\epsilon^{3}\|\partial_{x}^{4}N_{e}\|^{2} +\epsilon^{4}\|\partial_{x}^{5}N_{e}\|^{2}\big) +C\epsilon,
\end{split}
\end{equation}
and
\begin{equation}\label{eqaL22}
\begin{split}
\|\epsilon\partial_{tx}N_{i}\|^{2}\leq & C_{1}(\|N_{e}\|_{H^{2}}^{2} +\|U\|_{H^{2}}^{2} +\epsilon\|\partial_{x}^{3}N_{e}\|^{2} +\epsilon^{2}\|\partial_{x}^{4}N_{e}\|^{2}\\
&+\epsilon^{3}\|\partial_{x}^{5}N_{e}\|^{2} +\epsilon^{4}\|\partial_{x}^{6}N_{e}\|^{2}) +C\epsilon.
\end{split}
\end{equation}
In terms of $|\!|\!|(N_{e},U)|\!|\!|_{\epsilon}$, we can rewrite \eqref{eqaL21} and \eqref{eqaL22} as
\[
\|\epsilon\partial_{t}N_{i}\|_{H^{1}}^{2}\leq C_{1}|\!|\!|(N_{e},U)|\!|\!|_{\epsilon}^{2}+C\epsilon.
\]
\end{lemma}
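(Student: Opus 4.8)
The proof needs no new energy identity: one reads $\epsilon\partial_t N_i$ (and $\epsilon\partial_{tx}N_i$) directly off the remainder equation \eqref{rem-1} and controls the resulting expression by Lemma \ref{L1} and Lemma \ref{L8}. Solving \eqref{rem-1} for $\partial_t N_i$ and multiplying by $\epsilon$ gives the pointwise identity
\begin{equation*}
\epsilon\partial_t N_i=(1-u_i)\partial_x N_i-n_i\partial_x U-\epsilon\,\partial_x\tilde{n_i}\,U-\epsilon\,\partial_x\tilde{u_i}\,N_i-\epsilon^2\mathcal R_1 .
\end{equation*}
On $[0,\tau_\epsilon]$ one has $\tfrac12<n_i,n_e<\tfrac32$ and $|u_i|<\tfrac12$, while $\tilde{n_i},\tilde{u_i}$ and their $x$-derivatives are bounded in $L^\infty$ by fixed constants and $\|\mathcal R_1\|_{L^2}\le C$ by Lemma \ref{L8}; taking $L^2$ norms and using $\epsilon^4\le\epsilon$ yields
\begin{equation*}
\|\epsilon\partial_t N_i\|^2\le C\big(\|\partial_x N_i\|^2+\|\partial_x U\|^2+\epsilon^2\|U\|^2+\epsilon^2\|N_i\|^2\big)+C\epsilon .
\end{equation*}
Applying Lemma \ref{L1} with $\alpha=0$ and $\alpha=1$ to replace $\|N_i\|^2$ and $\|\partial_x N_i\|^2$ by the $\epsilon$-weighted sums $\|N_e\|^2+\epsilon\|\partial_x N_e\|^2+\cdots+\epsilon^4\|\partial_x^4N_e\|^2$ and $\|\partial_x N_e\|^2+\epsilon\|\partial_x^2N_e\|^2+\cdots+\epsilon^4\|\partial_x^5N_e\|^2$ respectively, and absorbing $\epsilon^2\|U\|^2\le\|U\|^2$, gives \eqref{eqaL21}.

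For \eqref{eqaL22} I would differentiate \eqref{rem-1} in $x$, multiply by $\epsilon$, and expand every product by the Leibniz rule, producing
\begin{equation*}
\epsilon\partial_{tx}N_i=(1-u_i)\partial_x^2N_i-\partial_x u_i\,\partial_x N_i-n_i\partial_x^2U-\partial_x n_i\,\partial_x U-\epsilon\,\partial_x(\partial_x\tilde{n_i}\,U)-\epsilon\,\partial_x(\partial_x\tilde{u_i}\,N_i)-\epsilon^2\partial_x\mathcal R_1 .
\end{equation*}
The only genuinely top-order term is $(1-u_i)\partial_x^2 N_i$, estimated by Lemma \ref{L1} with $\alpha=2$, which bounds $\|\partial_x^2N_i\|^2$ by $\|\partial_x^2N_e\|^2+\epsilon\|\partial_x^3N_e\|^2+\epsilon^2\|\partial_x^4N_e\|^2+\epsilon^3\|\partial_x^5N_e\|^2+\epsilon^4\|\partial_x^6N_e\|^2$. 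For the coefficients $\partial_x u_i=\epsilon\partial_x\tilde{u_i}+\epsilon^3\partial_x U$ and $\partial_x n_i=\epsilon\partial_x\tilde{n_i}+\epsilon^3\partial_x N_i$ I would use $H^1\hookrightarrow L^\infty$ together with the a priori bound \eqref{priori} to obtain $\|\partial_x u_i\|_{L^\infty},\|\partial_x n_i\|_{L^\infty}\le C_1(\epsilon\tilde{C})\,\epsilon$; this is the only place a $\tilde{C}$-dependent constant enters. The remaining terms cost at most first-order norms of $N_e,N_i,U$ times positive powers of $\epsilon$, while $\|\epsilon^2\partial_x\mathcal R_1\|^2\le C\epsilon^4\|\mathcal R_1\|_{H^1}^2\le C\epsilon$ by Lemma \ref{L8}; collecting everything and invoking Lemma \ref{L1} for $\alpha=0,1,2$ yields \eqref{eqaL22}.

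The rewritten estimate then follows by adding \eqref{eqaL21} and \eqref{eqaL22}: every summand on the right of \eqref{eqaL22} other than $C\epsilon$ is one of the non-negative terms defining $|\!|\!|(N_e,U)|\!|\!|_\epsilon^2$ in \eqref{|||}, so the right side of \eqref{eqaL22} is $\le C_1|\!|\!|(N_e,U)|\!|\!|_\epsilon^2+C\epsilon$, and \eqref{eqaL21} is dominated by it. I do not anticipate a genuine obstacle here; the one point that requires care is the bookkeeping of powers of $\epsilon$, since multiplying by $\epsilon$ trades the singular factor $\epsilon^{-1}$ in the transport terms of \eqref{rem-1} for one extra $x$-derivative of $N_i$, and it is precisely the $\epsilon$-weighting built into $|\!|\!|\cdot|\!|\!|_\epsilon$ — one extra power of $\epsilon$ per extra derivative of $N_e$, combined with Lemma \ref{L1}, which converts derivatives of $N_i$ into $\epsilon$-weighted derivatives of $N_e$ — that makes the estimate close with no net loss.
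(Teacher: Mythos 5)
Your proposal is correct and follows essentially the same route as the paper: solve \eqref{rem-1} for $\epsilon\partial_{t}N_{i}$ (respectively its $x$-derivative), take $L^{2}$ norms using the bounds $\tfrac12<n_{i},n_{e}<\tfrac32$, $|u_{i}|<\tfrac12$, the a priori assumption \eqref{priori} with Sobolev embedding for the cubic term, and Lemma \ref{L8} for $\mathcal R_{1}$, then convert $\|N_{i}\|,\|\partial_{x}N_{i}\|,\|\partial_{x}^{2}N_{i}\|$ into the $\epsilon$-weighted $N_{e}$-norms via Lemma \ref{L1} with $\alpha=0,1,2$. The only cosmetic difference is that you spell out the Leibniz expansion and bound $\partial_{x}u_{i},\partial_{x}n_{i}$ in $L^{\infty}$, whereas the paper estimates the corresponding product term $\epsilon^{6}\|\partial_{x}U\|_{L^{\infty}}^{2}\|\partial_{x}N_{i}\|^{2}$ directly; the mechanism is identical.
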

\begin{proof}
From \eqref{rem-1}, we have
\[
\epsilon\partial_{t}N_{i}=(1-u_{i})\partial_{t}N_{i}-n_{i}\partial_{x}U
-\epsilon\partial_{x}\tilde{n_{i}}U-\epsilon\partial_{x}\tilde{u_{i}}N_{i}-\epsilon^{2}\mathcal{R}_{1}.
\]
Since $\frac{1}{2}<n_{i}<\frac{3}{2}$ and $|u_{i}|<\frac{1}{2}$, taking $L^{2}$-norm yields
\begin{equation*}
\begin{split}
\|\epsilon\partial_tN_{i}\|^2\leq&\|(1-u_{i})\partial_xN_{i}\|^2 +\|n_{i}\partial_xU\|^2 +\epsilon^2\|\partial_x\tilde u_{i}N_{i}\|^2+ \epsilon^2\|\partial_x\tilde n_{i}U\|^2 +\epsilon^4\|\mathcal R_1\|^2\\
\leq & C(\|\partial_xN_{i}\|^2+\|\partial_xU\|^2) +C\epsilon^2(\epsilon^2+\|N_{i}\|^2+\|U\|^2).
\end{split}
\end{equation*}
Applying Lemma \ref{L1} with $\alpha=1$, we deduce \eqref{eqaL21}.
To prove \eqref{eqaL22}, we take $\partial_{x}$ of \eqref{rem-1} to obtain
\begin{equation*}
\begin{split}
\|\epsilon&\partial_{tx}N_{i}\|^{2}
\leq C(\|U\|_{H^{2}}^{2}+\|N_{i}\|_{H^{2}}^{2})+C\epsilon^{6}\int|\partial_{x}{N_{i}}|^{2}|\partial_{x}U|^{2}+C\epsilon^{4}.
\end{split}
\end{equation*}
We note that
\begin{equation*}
\begin{split}
C\epsilon^{6}\|\partial_{x}U\|_{L^{\infty}}^{2}\|\partial_{x}{N_{i}}\|^{2}
\leq C\epsilon^{6}\|U\|_{H^{2}}^{2}\|N_{i}\|_{H^{1}}^{2}
\leq C(\epsilon\tilde{C})\|U\|_{H^{2}}^{2}.
\end{split}
\end{equation*}
Applying Lemma \ref{L1} with $\alpha=2$, we deduce \eqref{eqaL22}.
The Lemma then follows from Lemma \ref{L1}.
\end{proof}

\begin{lemma}\label{L3}
 Let $(N_{i},N_{e},U)$ be a solution to \eqref{rem} and $\alpha\geq0$ be an integer. There exist some constants $C_{1}=C_{1}(\epsilon\tilde{C})$ and $\epsilon_{1}>0$ such that for every $0<\epsilon<\epsilon_{1}$,
\begin{equation}\label{eqaL3}
\begin{split}
\epsilon^{4}\|\partial_{t}\partial_{x}^{\alpha+4}N_{e}\|^{2}&+\epsilon^{3}\|\partial_{t}\partial_{x}^{\alpha+3}N_{e}\|^{2}
+\epsilon^{2}\|\partial_{t}\partial_{x}^{\alpha+2}N_{e}\|^{2}\\ \ \ \ \ \ \ \ \
&+\epsilon\|\partial_{t}\partial_{x}^{\alpha+1}N_{e}\|^{2}
+\|\partial_{t}\partial_{x}^{\alpha}N_{e}\|^{2}
\leq C\|\partial_{t}\partial_{x}^{\alpha}N_{i}\|^{2}+C_{1}.
\end{split}
\end{equation}
\end{lemma}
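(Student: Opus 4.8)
The plan is to imitate the proof of Lemma~\ref{L1}, with $N_e$ replaced by $\partial_t\partial_x^\alpha N_e$ and with the extra forcing terms produced by the differentiation carried along. First I would apply $\partial_t\partial_x^\alpha$ to the scaled Poisson relation \eqref{rem-3}. By the Leibniz rule, the principal part keeps exactly the structure of \eqref{rem-3}: it contains $\tfrac{n_e}{\epsilon}\partial_x^{2}\big(\partial_t\partial_x^\alpha N_e\big)$ and $-\tfrac{H^2}{4n_e}\partial_x^{4}\big(\partial_t\partial_x^\alpha N_e\big)$, together with lower order terms $2\partial_x\tilde n_e\,\partial_x(\partial_t\partial_x^\alpha N_e)+\partial_x^2\tilde n_e\,\partial_t\partial_x^\alpha N_e$ and their quantum counterparts; on the right-hand side one gets $\tfrac{\partial_t\partial_x^\alpha N_e-\partial_t\partial_x^\alpha N_i}{\epsilon^{2}}$. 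Everything left over, call it $\mathcal F$, consists of commutators in which at least one of the derivatives $\partial_t,\partial_x$ falls on a coefficient; those coefficients are built from the known profiles $\tilde n_e$ (each factor carrying a power $\epsilon$), from $N_e$ (each factor carrying a power $\epsilon^{3}$, since $n_e=1+\epsilon\tilde n_e+\epsilon^{3}N_e$), and from the remainder pieces $\mathcal R_3^{1},\mathcal R_3^{2},\mathcal R_3^{3}$. Note that $\mathcal F$ involves no $N_i$, so the only occurrence of $N_i$ is the term $\tfrac{\partial_t\partial_x^\alpha N_i}{\epsilon^2}$.

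Second I would estimate $\epsilon^2\|\mathcal F\|_{L^2}$. The terms built only from the known profiles and from $\mathcal R_3^{1},\mathcal R_3^{2}$ are bounded by $C_1$ using Lemma~\ref{L8} and the smoothness of the profiles. The dangerous terms are those in which $\partial_t$ hits a coefficient containing $N_e$, e.g. $\partial_t(1/n_e)\sim-(\epsilon\,\partial_t\tilde n_e+\epsilon^{3}\,\partial_t N_e)/n_e^{2}$, multiplied by a high spatial derivative of $N_e$ that carries no time derivative; here the point is a careful bookkeeping of powers of $\epsilon$. Using the a priori bound \eqref{priori}, the Sobolev embedding $H^1\hookrightarrow L^\infty$, and Lemma~\ref{L1} to bound the spatial $N_e$-factors, each such term is either absorbed into one of the weighted quantities $\epsilon^{j}\|\partial_t\partial_x^{\alpha+j}N_e\|^{2}$ ($0\le j\le4$) on the left of \eqref{eqaL3}, with an arbitrarily small constant $\delta$, or bounded by $C_1$. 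The contributions of $\partial_t\mathcal R_3^{3}$ are handled by \eqref{equ34}, which again produces $\|\partial_t N_e\|_{H^\alpha}$ times $O(1)$ quantities and is absorbed likewise.

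Third, following verbatim the three-step scheme of Lemma~\ref{L1}, I would multiply the differentiated equation by $\epsilon^{2}$ and take successive $L^2$ inner products with $\partial_t\partial_x^\alpha N_e$, with $\epsilon\,\partial_t\partial_x^{\alpha+2}N_e$, and with $\epsilon^{2}\,\partial_t\partial_x^{\alpha+4}N_e$. Exactly as in \eqref{3.1}, one integration by parts in the $\tfrac{n_e}{\epsilon}\partial_x^2$-term, two in the $\tfrac{H^2}{4n_e}\partial_x^4$-term, and rearrangement (moving $\langle\partial_t\partial_x^\alpha N_e,\text{test}\rangle$ to the left) produce, on the left, three consecutive members of the sum $\sum_{j=0}^{4}\epsilon^{j}\|\partial_t\partial_x^{\alpha+j}N_e\|^{2}$ with positive coefficients, since $\tfrac12<n_e<\tfrac32$ and $H$ is fixed; the cross terms where derivatives fall on coefficients are treated exactly as $A_2,\dots,A_{21}$ in Lemma~\ref{L1}. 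The single term involving $N_i$, namely the pairing of $\partial_t\partial_x^\alpha N_i$ with the ($\epsilon$-weighted) test function, I would \emph{not} integrate by parts: since the $N_e$-factor already carries a compensating power of $\epsilon$, a weighted Young inequality bounds it by $\delta\sum_{j}\epsilon^{j}\|\partial_t\partial_x^{\alpha+j}N_e\|^{2}+C\|\partial_t\partial_x^\alpha N_i\|^{2}$ — this is why only $\|\partial_t\partial_x^\alpha N_i\|^2$, and no higher derivative of $N_i$, shows up on the right of \eqref{eqaL3}. Adding the three inequalities, choosing $\delta$ small to absorb all left-over terms into the left side, and collecting constants gives \eqref{eqaL3}.

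The main obstacle is the second step: the commutator terms in which $\partial_t$ differentiates a coefficient built from $N_e$ contain $\partial_t N_e$, which is itself the unknown being estimated and is not a priori small, so one cannot simply bound them — one must exploit the explicit $\epsilon$-weights ($\epsilon^{3}$ in front of $N_e$ inside $n_e$, $\epsilon$ inside the $\tilde n_e$-products) together with the $\epsilon$-weighted a priori bound \eqref{priori}, so that these terms are \emph{absorbed} into the left-hand side rather than merely estimated. Everything else is a transcription of the computation already carried out in Lemma~\ref{L1}.
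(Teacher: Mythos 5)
Your proposal is correct and follows essentially the same route as the paper: differentiate \eqref{rem-3} in $t$ (and $x$), test against $\partial_t\partial_x^{\alpha}N_e$, $\epsilon\,\partial_t\partial_x^{\alpha+2}N_e$, $\epsilon^{2}\,\partial_t\partial_x^{\alpha+4}N_e$ (the paper equivalently applies $\partial_t$, $\partial_{tx}$, $\partial_t\partial_x^{2}$ and tests with the matching $\epsilon$-weighted derivatives), use the lower bound $\tfrac12<n_e<\tfrac32$ for the principal terms, absorb the $\partial_tN_e$-bearing coefficient terms via the $\epsilon$-weights, \eqref{priori}, Sobolev embedding and \eqref{equ34}, and treat the single $N_i$ term by Young's inequality so that only $\|\partial_t\partial_x^{\alpha}N_i\|^{2}$ appears. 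This matches the paper's proof of Lemma \ref{L3} (itself modeled on Lemma \ref{L1}) up to an integration-by-parts rearrangement.
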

\begin{proof}
The proof is similar to that of Lemma \ref{L1}. When $\alpha=0$, by first taking $\partial_{t}$ of \eqref{rem-3} and then taking inner product with $\partial_{t}N_{e}$ and integration by parts, we have
\begin{equation}
\begin{split}\label{m1}
\|\partial_{t}&N_{e}\|^{2}
+\epsilon\int n_{e}(\partial_{tx}N_{e})^{2}+\frac{\epsilon^{2}H^{2}}{4}\int\frac{1}{n_{e}}(\partial_{t}\partial_{x}^{2}N_{e})^{2}\\
=&-\epsilon\int\partial_{x}n_{e}\partial_{tx}N_{e}\partial_{t}N_{e}
+\epsilon\int\partial_{t}n_{e}\partial_{x}^{2}N_{e}\partial_{t}N_{e}
-\frac{\epsilon^{2}H^{2}}{2}\int(\partial_{x}\frac{1}{n_{e}})\partial_{t}\partial_{x}^{2}N_{e}\partial_{tx}N_{e}\\
&-\frac{\epsilon^{2}H^{2}}{4}\int(\partial_{x}^{2}\frac{1}{n_{e}})\partial_{t}\partial_{x}^{2}N_{e}\partial_{t}N_{e}
-\frac{\epsilon^{2}H^{2}}{4}\int(\partial_{t}\frac{1}{n_{e}})\partial_{x}^{4}N_{e}\partial_{t}N_{e}\\
&+2\epsilon^{2}\int\partial_{t}(\partial_{x}\tilde{n_{e}}\partial_{x}N_{e})\partial_{t}N_{e}
+\epsilon^{4}\int\partial_{t}\big((\partial_{x}N_{e})^{2}\big)\partial_{t}N_{e}
+\epsilon^{2}\int\partial_{t}(\partial_{x}^{2}\tilde{n_{e}}N_{e})\partial_{t}N_{e}\\
&+\epsilon^{2}\int\partial_{t}\mathcal{R}_{3}^{1}\partial_{t}N_{e}
-\frac{H^{2}}{4}\Big\{-12\epsilon^{5}\int\partial_{t}\big[\frac{(\partial_{x}\tilde{n_{e}})^{3}}{n_{e}^{4}}\partial_{x}N_{e}\big]\partial_{t}N_{e}\\
&+14\epsilon^{4}\int\partial_{t}\big[\frac{\partial_{x}\tilde{n_{e}}\partial_{x}^{2}\tilde{n_{e}}}{n_{e}^{3}}\partial_{x}N_{e}\big]\partial_{t}N_{e}
-3\epsilon^{3}\int\partial_{t}\big[\frac{\partial_{x}^{3}\tilde{n_{e}}}{n_{e}^{2}}\partial_{x}N_{e}\big]\partial_{t}N_{e}\\
&-18\epsilon^{7}\int\partial_{t}\big[\frac{(\partial_{x}\tilde{n_{e}})^{2}}{n_{e}^{4}}(\partial_{x}N_{e})^{2}\big]\partial_{t}N_{e}
+7\epsilon^{4}\int\partial_{t}\big[\frac{(\partial_{x}\tilde{n_{e}})^{2}}{n_{e}^{3}}\partial_{x}^{2}N_{e}\big]\partial_{t}N_{e}\\
&+7\epsilon^{6}\int\partial_{t}\big[\frac{\partial_{x}^{2}\tilde{n_{e}}}{n_{e}^{3}}(\partial_{x}N_{e})^{2}\big]\partial_{t}N_{e}
-4\epsilon^{3}\int\partial_{t}\big[\frac{\partial_{x}^{2}\tilde{n_{e}}}{n_{e}^{2}}\partial_{x}^{2}N_{e}\big]\partial_{t}N_{e}\\
&-12\epsilon^{9}\int\partial_{t}\big[\frac{\partial_{x}\tilde{n_{e}}}{n_{e}^{4}}(\partial_{x}N_{e})^{3}\big]\partial_{t}N_{e}
+14\epsilon^{6}\int\partial_{t}\big[\frac{\partial_{x}\tilde{n_{e}}}{n_{e}^{3}}\partial_{x}N_{e}\partial_{x}^{2}N_{e}\big]\partial_{t}N_{e}\\
&-3\epsilon^{3}\int\partial_{t}\big[\frac{\partial_{x}\tilde{n_{e}}}{n_{e}^{2}}\partial_{x}^{3}N_{e}\big]\partial_{t}N_{e}
-3\epsilon^{11}\int\partial_{t}\big[\frac{1}{n_{e}^{4}}(\partial_{x}N_{e})^{4}\big]\partial_{t}N_{e}\\
&+7\epsilon^{8}\int\partial_{t}\big[\frac{1}{n_{e}^{3}}(\partial_{x}N_{e})^{2}\partial_{x}^{2}N_{e}\big]\partial_{t}N_{e}
-2\epsilon^{5}\int\partial_{t}\big[\frac{1}{n_{e}^{2}}(\partial_{x}^{2}N_{e})^{2}\big]\partial_{t}N_{e}\\
&-3\epsilon^{5}\int\partial_{t}\big[\frac{1}{n_{e}^{2}}\partial_{x}N_{e}\partial_{x}^{3}N_{e}\big]\partial_{t}N_{e}
+\epsilon^{2}\int\partial_{t}\big[\frac{1}{n_{e}^{4}}(\mathcal{R}_{3}^{2}+\mathcal{R}_{3}^{3})\big]\partial_{t}N_{e}\Big\}\\
&+\int\partial_{t}N_{i}\partial_{t}N_{e}\\
=&:\sum_{i=1}^{25}B_{i} \ .
\end{split}
\end{equation}

\emph{Estimate of the LHS of \eqref{m1}.}
Since $\frac{1}{2}<n_{e}<\frac{3}{2}$ and $H$ is a fixed constant, there exists a fixed constant $C$ such that
\begin{equation*}
\begin{split}
\epsilon\int n_{e}(\partial_{tx}N_{e})^{2} +\frac{\epsilon^{2}H^{2}}{4}\int\frac{1}{n_{e}} (\partial_{t}\partial_{x}^{2}N_{e})^{2}
\geq C(\epsilon\|\partial_{tx}N_{e}\|^{2} +\epsilon^{2}\|\partial_{t} \partial_{x}^{2}N_{e}\|^{2}).
\end{split}
\end{equation*}
Thus the LHS of \eqref{m1} is equal or greater than $C(\|\partial_{t}N_{e}\|^{2} +\epsilon\|\partial_{tx}N_{e}\|^{2} +\epsilon^{2} \|\partial_{t}\partial_{x}^{2}N_{e}\|^{2})$. Next, we estimate the righthand side terms. For $B_{1}$, by applying H\"older's inequality, Cauchy inequality and Sobolev embedding $H^1\hookrightarrow L^{\infty}$, we have
\begin{equation*}
\begin{split}
B_{1}&=\epsilon^{2}\int(\partial_{x}\tilde{n_{e}} +\epsilon^{2}\partial_{x}N_{e})\partial_{tx}N_{e} \partial_{t}N_{e}\\
&\leq C\epsilon(1+\epsilon^{2} \|\partial_{x}N_{e}\|_{L^{\infty}}) (\epsilon\|\partial_{t}N_{e}\|^{2} +\epsilon^{2}\|\partial_{tx}N_{e}\|^{2})\\
&\leq C(\epsilon\tilde{C}) (\epsilon\|\partial_{t}N_{e}\|^{2} +\epsilon^{2}\|\partial_{tx}N_{e}\|^{2})\\
&\leq C_{1}(\epsilon\|\partial_{t}N_{e}\|^{2} +\epsilon^{2}\|\partial_{tx}N_{e}\|^{2}),
\end{split}
\end{equation*}
where we have used \eqref{priori}. Similarly,
\begin{equation*}
\begin{split}
B_{2}\leq C_{1}(\epsilon\|\partial_{t}N_{e}\|^{2} +\epsilon^{2}\|\partial_{tx}N_{e}\|^{2})+C_{1}.
\end{split}
\end{equation*}
By \eqref{priori}, Sobolev embedding theorem and Cauchy inequality, we have
\begin{equation*}
\begin{split}
B_{3}+B_{4}\leq C_{1}(\epsilon\|\partial_{t}N_{e}\|^{2}+\epsilon^{2}\|\partial_{tx}N_{e}\|^{2}
+\epsilon^{3}\|\partial_{t}\partial_{x}^{2}N_{e}\|^{2}),
\end{split}
\end{equation*}
where we have used \eqref{one order} and \eqref{two orders}.

\emph{Estimate of $B_{5}$.} Similar to \eqref{one order}, we note that
\begin{equation}
\begin{split}\label{one}
|\partial_{t}\frac{1}{n_{e}}|
\leq C(\epsilon|\partial_{t}\tilde{n_{e}}|+\epsilon^{3}|\partial_{t}N_{e}|).
\end{split}
\end{equation}
Therefore, we have
\begin{equation*}
\begin{split}
B_{5}&\leq\epsilon^{2} \|\partial_{x}^{4}N_{e}\|
(\epsilon\|\partial_{t}\tilde{n_{e}}\|_{L^{\infty}}+\epsilon^{3}\|\partial_{t}{N_{e}}\|_{L^{\infty}})\|\partial_{t}N_{e}\|\\
&\leq \epsilon^{2}\tilde{C}(\epsilon C+\epsilon^{3}\|\partial_{t}{N_{e}}\|_{H^{1}})\|\partial_{t}N_{e}\|\\
&\leq C_{1}(\epsilon\|\partial_{t}N_{e}\|^{2}+\epsilon^{2}\|\partial_{tx}N_{e}\|^{2})+C_{1}.
\end{split}
\end{equation*}

\emph{Estimate of $B_{6}$.} By direct computation, we have
\begin{equation*}
\begin{split}
\partial_{t}(\partial_{x}\tilde{n_{e}}\partial_{x}N_{e})
=\partial_{tx}\tilde{n_{e}}\partial_{x}N_{e}+\partial_{x}\tilde{n_{e}}\partial_{tx}N_{e},
\end{split}
\end{equation*}
which yields that
\begin{equation*}
\begin{split}
\|\partial_{t}(\partial_{x}\tilde{n_{e}}\partial_{x}N_{e})\|
\leq C(\|\partial_{x}N_{e}\|+\|\partial_{tx}N_{e}\|),
\end{split}
\end{equation*}
where $C$ is a fixed constant. By applying H\"older inequality and Young inequality, we have
\begin{equation*}
\begin{split}
B_{6}\leq C_{1}(\epsilon\|\partial_{t}N_{e}\|^{2}+\epsilon^{2}\|\partial_{tx}N_{e}\|^{2})+C_{1}.
\end{split}
\end{equation*}
$B_{8}$ is similar to $B_{6}$.

\emph{Estimate of $B_{7}$.} We note that
\begin{equation*}
\begin{split}
\|\partial_{t}[(\partial_{x}N_{e})^{2}]\|
\leq C(\|\partial_{x}N_{e}\|_{L^{\infty}}\|\partial_{tx}N_{e}\|).
\end{split}
\end{equation*}
Thus, we have
\begin{equation*}
\begin{split}
B_{7}\leq C_{1}(\epsilon\|\partial_{t}N_{e}\|^{2}+\epsilon^{2}\|\partial_{tx}N_{e}\|^{2}),
\end{split}
\end{equation*}
thanks to H\"older inequality, Cauchy inequality and Sobolev embedding $H^1\hookrightarrow L^{\infty}$ and \eqref{priori}.

\emph{Estimate of $B_{9}$.} Since $\mathcal{R}_{3}^{1}$ is known, thus by Cauchy inequality, we have
\begin{equation*}
\begin{split}
B_{9}\leq C_{1}\epsilon\|\partial_{t}N_{e}\|^{2}+C_{1}.
\end{split}
\end{equation*}

Estimate of $B_{20}$. By direct computation, we have
\begin{equation*}
\begin{split}
\partial_{t}\left[\frac{1}{n_{e}^{4}} (\partial_{x}N_{e})^{4}\right]
=\partial_{t}\left(\frac{1}{n_{e}^{4}}\right)(\partial_{x}N_{e})^{4}+\frac{4}{n_{e}^{4}}(\partial_{x}N_{e})^{3}\partial_{tx}N_{e}.
\end{split}
\end{equation*}
Similar to \eqref{one}, we have
\begin{equation}
\begin{split}\label{repeat}
\left|\partial_{t}\left(\frac{1}{n_{e}^{4}}\right)\right|
\leq C(\epsilon+\epsilon^{3}|\partial_{t}N_{e}|).
\end{split}
\end{equation}
Thus by applying H\"older inequality, Sobolev embedding $H^1\hookrightarrow L^{\infty}$ and \eqref{priori} again, we have
\begin{equation*}
\begin{split}
B_{20}&\leq C\epsilon^{11}\|\partial_{t}[\frac{1}{n_{e}^{4}}(\partial_{x}N_{e})^{4}]\|\|\partial_{t}N_{e}\|\\
&\leq C\epsilon^{11}\|\partial_{x}N_{e}\|_{L^{\infty}}^{4}(\epsilon+\epsilon^{3}\|\partial_{t}N_{e}\|)\|\partial_{t}N_{e}\|
+C\epsilon^{11}\|\partial_{x}N_{e}\|_{L^{\infty}}^{3}(\|\partial_{t}N_{e}\|\|\partial_{tx}N_{e}\|)\\
&\leq C_{1}(\epsilon\|\partial_{t}N_{e}\|^{2}+\epsilon^{2}\|\partial_{tx}N_{e}\|^{2})+C_{1}.
\end{split}
\end{equation*}
The estimates of $B_{10\sim13},B_{15} \ and \ B_{17}$ are similar to that for $B_{20}$.

\emph{Estimate of $B_{21}$.} By direct computation, we have
\begin{equation*}
\begin{split}
\partial_{t}\left[\frac{1}{n_{e}^{3}} (\partial_{x}N_{e})^{2}\partial_{x}^{2}N_{e}\right]
=\partial_{t}\left(\frac{1}{n_{e}^{3}}\right)(\partial_{x}N_{e})^{2}\partial_{x}^{2}N_{e}
+\frac{2}{n_{e}^{3}}\partial_{x}N_{e}\partial_{x}^{2}N_{e}\partial_{tx}N_{e}
+\frac{1}{n_{e}^{3}}(\partial_{x}N_{e})^{2}\partial_{t}\partial_{x}^{2}N_{e}.
\end{split}
\end{equation*}
Thus similarly, we have
\begin{equation*}
\begin{split}
B_{21}\leq &C\epsilon^{8}\|\partial_{t}\big[\frac{1}{n_{e}^{3}}(\partial_{x}N_{e})^{2}\partial_{x}^{2}N_{e}\big]\|\|\partial_{t}N_{e}\|\\
\leq &C\epsilon^{8}[\|\partial_{x}N_{e}\|_{L^{\infty}}^{2}\|\partial_{x}^{2}N_{e}\|_{L^{\infty}}
(\epsilon+\epsilon^{3}\|\partial_{t}N_{e}\|)
+\|\partial_{x}N_{e}\|_{L^{\infty}}\|\partial_{x}^{2}N_{e}\|_{L^{\infty}}\|\partial_{tx}N_{e}\|\\
&+\|\partial_{x}N_{e}\|_{L^{\infty}}^{2}\|\partial_{t}\partial_{x}^{2}N_{e}\|]\|\partial_{t}N_{e}\|\\
\leq &C_{1}\|N_{e}\|_{H^{2}}^{2}(1+\epsilon\|\partial_{x}^{3}N_{e}\|^{2})\epsilon\|\partial_{t}N_{e}\|^{2}\\
&+C_{1}(\|N_{e}\|_{H^{2}}^{2}+\epsilon\|\partial_{x}^{3}N_{e}\|^{2})(\epsilon\|\partial_{t}N_{e}\|^{2}+\epsilon^{2}\|\partial_{tx}N_{e}\|^{2})\\
&+C_{1}\|N_{e}\|_{H^{2}}^{2}((\epsilon\|\partial_{t}N_{e}\|^{2}+\epsilon^{3}\|\partial_{t}\partial_{x}^{2}N_{e}\|^{2}))\\
\leq &C_{1}(\epsilon\|\partial_{t}N_{e}\|^{2}+\epsilon^{2}\|\partial_{tx}N_{e}\|^{2}+\epsilon^{3}\|\partial_{t}\partial_{x}^{2}N_{e}\|^{2})+C_{1}.
\end{split}
\end{equation*}
The estimates of $B_{14},B_{16},B_{18}\ and \ B_{19}$ are similar to that for $B_{21}$.

\emph{Estimate of $B_{23}$.} By direct computation, we have
\begin{equation*}
\begin{split}
\partial_{t}\left[\frac{1}{n_{e}^{2}} \partial_{x}N_{e}\partial_{x}^{3}N_{e}\right]
&=\partial_{t}\left(\frac{1}{n_{e}^{2}}\right)\partial_{x}N_{e}\partial_{x}^{3}N_{e}
+\frac{1}{n_{e}^{2}}\partial_{tx}N_{e}\partial_{x}^{3}N_{e}
+\frac{1}{n_{e}^{2}}\partial_{x}N_{e}\partial_{t}\partial_{x}^{3}N_{e}\\
&=:G_{1}+G_{2}+G_{3}.
\end{split}
\end{equation*}
Thus $B_{23}$ is divided three terms
\begin{equation*}
\begin{split}
B_{23}=\frac{3\epsilon^{5}H^{2}}{4}\sum_{i=1}^{3}\int G_{i}\partial_{t}N_{e}=:B_{231}+B_{232}+B_{233}.
\end{split}
\end{equation*}
The first two terms $B_{231}$ and $B_{232}$ can be easily estimated by $C_{1}(\epsilon\|\partial_{t}N_{e}\|^{2} +\epsilon^{2}\|\partial_{tx}N_{e}\|^{2})$. For the last term $B_{233}$, we integrate by parts and use H\"older inequality, Cauchy inequality and \eqref{priori} again to obtain
\begin{equation*}
\begin{split}
B_{233}=&-\frac{3H^{2}}{4}\epsilon^{5}\int\left((\partial_{x}\frac{1}{n_{e}^{2}})\partial_{x}N_{e}
+\frac{1}{n_{e}^{2}}\partial_{x}^{2}N_{e}\right)\partial_{t}\partial_{x}^{2}N_{e}\partial_{t}N_{e}\\
&-\frac{3H^{2}}{4}\epsilon^{5}\int\frac{1}{n_{e}^{2}}\partial_{x}N_{e}\partial_{t}\partial_{x}^{2}N_{e}\partial_{tx}N_{e}\\
\leq &C_{1}(\epsilon\|\partial_{t}N_{e}\|^{2}+\epsilon^{2}\|\partial_{tx}N_{e}\|^{2}+\epsilon^{3}\|\partial_{t}\partial_{x}^{2}N_{e}\|^{2}),
\end{split}
\end{equation*}
where we also have used  \eqref{one order}.
Thus we have
\begin{equation*}
\begin{split}
B_{23}\leq C_{1}(\epsilon\|\partial_{t}N_{e}\|^{2}+\epsilon^{2}\|\partial_{tx}N_{e}\|^{2}
+\epsilon^{3}\|\partial_{t}\partial_{x}^{2}N_{e}\|^{2}).
\end{split}
\end{equation*}
$B_{22}$ is similar to $B_{23}$.

\emph{Estimate of $B_{24}$.} Since $\mathcal{R}_{3}^{2}$ is known, by
using \eqref{equ34} in Lemma \ref{L8}, we have
\[
B_{24}\leq C_{1}(1+\epsilon\|\partial_{t}N_{e}\|^{2}).
\]

\emph{Estimate of $B_{25}$.} Applying Young inequality, we have
\[
B_{25}=\int\partial_{t}N_{i}\partial_{t}N_{e}\leq\gamma\|\partial_{t}N_{e}\|^{2}+C_{\gamma}\|\partial_{t}N_{i}\|^{2},
\]
where for arbitrary small $\gamma>0$. Hence, we have shown that there exists some $\epsilon_1>0$ such that for $0<\epsilon<\epsilon_1$, we have
\begin{equation}\label{lemma31}
\begin{split}
\|\partial_{t}N_{e}\|^{2} +\epsilon\|\partial_{tx}N_{e}\|^{2} +\epsilon^{2}\|\partial_{t} \partial_{x}^{2}N_{e}\|^{2}\leq C\|\partial_{t}N_{i}\|^{2}+C_{1}.
\end{split}
\end{equation}
Similarly, taking $\partial_{tx}$ of \eqref{rem-3} and then taking inner product with $\epsilon\partial_{tx}N_{e}$, we have
\begin{equation}\label{lemma32}
\begin{split}
\epsilon\|\partial_{tx}N_{e}\|^{2}+\epsilon^{2}\|\partial_{t}\partial_{x}^{2}N_{e}\|^{2}
+\epsilon^{3}\|\partial_{t}\partial_{x}^{3}N_{e}\|^{2}
\leq C_{\alpha_{2}}\|\partial_{t}N_{i}\|^{2} +\epsilon\|\partial_{t}N_{e}\|^{2}+C_{1}.
\end{split}
\end{equation}
Taking $\partial_{t}\partial_{x}^{2}$ of \eqref{rem-3} and then taking inner product with $\epsilon^{2}\partial_{t}\partial_{x}^{2}N_{e}$, we have
\begin{equation}\label{lemma33}
\begin{split}
\epsilon^{2}\|\partial_{t}\partial_{x}^{2}N_{e}\|^{2}+\epsilon^{3}\|\partial_{t}\partial_{x}^{3}N_{e}\|^{2}
+\epsilon^{4}\|\partial_{t}\partial_{x}^{4}N_{e}\|^{2}
\leq &C_{\alpha_{3}}\|\partial_{t}N_{i}\|^{2}+\epsilon^{2}\|\partial_{tx}N_{e}\|^{2}\\
&+\epsilon\|\partial_{t}N_{e}\|^{2}+C_{1}.
\end{split}
\end{equation}
Putting \eqref{lemma31}, \eqref{lemma32} and \eqref{lemma33} together, let $C=\max\{C_{\alpha_{1}},C_{\alpha_{2}},C_{\alpha_{3}}\}$, we obtain
\begin{equation*}
\begin{split}
\|\partial_{t}N_{e}\|^{2}&+\epsilon\|\partial_{tx}N_{e}\|^{2}+\epsilon^{2}\|\partial_{t}\partial_{x}^{2}N_{e}\|^{2}
+\epsilon^{3}\|\partial_{t}\partial_{x}^{3}N_{e}\|^{2}
+\epsilon^{4}\|\partial_{t}\partial_{x}^{4}N_{e}\|^{2}
\leq C\|\partial_{t}N_{i}\|^{2}+C_{1}.
\end{split}
\end{equation*}
Thus we have proven \eqref{eqaL3} for $\alpha=0$. The case of $\alpha\geq1$ can be proved similarly.
\end{proof}

\subsection{Zeroth, first and second order estimates}\label{2.2}
The zeroth, first and second order estimates can be summarized in the following
\begin{proposition}\label{P1-P2}
Let $(N_{i},N_{e},U)$ be a solution to \eqref{rem} and $\gamma=0,1,2$, then
\begin{equation}\label{equ10}
\begin{split}
\frac{1}{2}\frac{d}{dt}&\|\partial_{x}^{\gamma}U\|^{2} +\frac{1}{2}\frac{d}{dt} \left[\int\frac{n_{e}}{n_{i}}(\partial_{x}^{\gamma} N_{e})^{2} +\epsilon\int\frac{n_{e}^{2}}{n_{i}} (\partial_{x}^{\gamma+1}N_{e})^{2} +\frac{\epsilon^{2}H^{2}}{4} \int\frac{1}{n_{i}}(\partial_{x}^{\gamma+2}N_{e})^{2}\right]\\
&+\frac{1}{2}\frac{\epsilon H^{2}}{4}\frac{d}{dt}\left[\int \frac{(\partial_{x}^{\gamma+1}N_{e})^{2}}{n_{e}n_{i}} +\epsilon\int\frac{1}{n_{i}} (\partial_{x}^{\gamma+2}N_{e})^{2} +\frac{\epsilon^{2}H^{2}}{4}\int\frac{1}{n_{e}^{2} n_{i}}(\partial_{x}^{\gamma+3}N_{e})^{2}\right]\\
\leq & C_{1}(1+\epsilon|\!|\!|(N_{e},U)|\!|\!|_{\epsilon}^{4})(1+|\!|\!|(N_{e},U)|\!|\!|_{\epsilon}^{2}).
\end{split}
\end{equation}
\end{proposition}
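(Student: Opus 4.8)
The plan is a weighted energy estimate on the $\partial_x^\gamma$--differentiated remainder system, $\gamma=0,1,2$, arranged so that every $O(\epsilon^{-1})$ contribution either cancels against another or is absorbed into an exact time derivative of the functional on the left of \eqref{equ10}. First I would take the $L^2$ inner product of $\partial_x^\gamma$ applied to the $U$--equation \eqref{rem-2} with $\partial_x^\gamma U$. Separately, from \eqref{rem-3} one has
\begin{equation*}
N_e-N_i=\epsilon\,n_e\,\partial_x^2N_e-\frac{\epsilon^2H^2}{4n_e}\partial_x^4N_e+\epsilon^2\mathcal F,
\end{equation*}
with $\mathcal F$ a lower--order expression; inserting this in \eqref{rem-1} yields an evolution equation for $N_e$,
\begin{equation*}
\partial_tN_e+\frac{n_i}{\epsilon}\partial_xU=\frac{1-u_i}{\epsilon}\partial_xN_e+\epsilon\,\partial_t\big(n_e\partial_x^2N_e\big)-\frac{\epsilon^2H^2}{4}\partial_t\Big(\frac1{n_e}\partial_x^4N_e\Big)+\mathcal E,
\end{equation*}
where $\mathcal E$ gathers terms which, after the integrations by parts used below, obey the bound on the right of \eqref{equ10}. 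I would then test $\partial_x^\gamma$ of this equation against the weight $\tfrac{n_e}{n_i}\partial_x^\gamma N_e$; the choice of weight is forced by the requirement that the singular coupling $\tfrac1\epsilon\int n_e\,\partial_x^{\gamma+1}U\,\partial_x^\gamma N_e$ produced here match, up to sign, the singular coupling $\tfrac1\epsilon\int n_e\,\partial_x^{\gamma+1}N_e\,\partial_x^\gamma U$ coming from \eqref{rem-2}.

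Adding the two resulting identities, these couplings combine into $\tfrac1\epsilon\int n_e\,\partial_x(\partial_x^\gamma N_e\,\partial_x^\gamma U)=-\tfrac1\epsilon\int\partial_x n_e\,\partial_x^\gamma N_e\,\partial_x^\gamma U$, which is $O(1)$ because $\partial_x n_e/\epsilon=\partial_x\tilde n_e+\epsilon^2\partial_xN_e$ is bounded under \eqref{priori}; likewise the singular transport terms integrate by parts to $O(1)$ quantities since $\partial_x u_i/\epsilon=\partial_x\tilde u_i+\epsilon^2\partial_xU$ and $\partial_x\big(\tfrac{n_e(1-u_i)}{n_i}\big)=O(\epsilon)$. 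The two $\partial_t$--terms in the $N_e$--equation are the source of the extra entries of the functional in \eqref{equ10}: integrating $\epsilon\int\tfrac{n_e^2}{n_i}\,\partial_t\partial_x^{\gamma+2}N_e\,\partial_x^\gamma N_e$ by parts twice in $x$ yields $\tfrac\epsilon2\tfrac{d}{dt}\int\tfrac{n_e^2}{n_i}(\partial_x^{\gamma+1}N_e)^2$, and the $\partial_x^4$--term similarly yields $\tfrac{\epsilon^2H^2}{8}\tfrac{d}{dt}\int\tfrac1{n_i}(\partial_x^{\gamma+2}N_e)^2$; the commutator errors contain norms of $\partial_tN_e$, which are controlled by Lemma \ref{L3}.

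The dispersive coupling $\tfrac{H^2}{4}\int\tfrac1{n_e}\partial_x^{\gamma+3}N_e\,\partial_x^\gamma U$ in the $U$--energy is the delicate term. Integrating once by parts reduces it to $-\tfrac{H^2}4\int\tfrac1{n_e}\partial_x^{\gamma+1}U\,\partial_x^{\gamma+2}N_e$ up to terms containing $\partial_x(\tfrac1{n_e})=O(\epsilon)$, which are acceptable; I would then substitute $\partial_x^{\gamma+1}U$ from the $N_e$--equation above,
\begin{equation*}
\frac{n_i}\epsilon\partial_x^{\gamma+1}U=-\partial_t\partial_x^\gamma N_e+\frac{1-u_i}\epsilon\partial_x^{\gamma+1}N_e+\epsilon\,\partial_t\partial_x^\gamma\big(n_e\partial_x^2N_e\big)-\frac{\epsilon^2H^2}4\partial_t\partial_x^\gamma\Big(\frac1{n_e}\partial_x^4N_e\Big)+\cdots,
\end{equation*}
and integrate by parts once more. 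The three $\partial_t$--terms generate, in turn, $-\tfrac{\epsilon H^2}8\tfrac{d}{dt}\int\tfrac{(\partial_x^{\gamma+1}N_e)^2}{n_en_i}$, $-\tfrac{\epsilon^2H^2}8\tfrac{d}{dt}\int\tfrac1{n_i}(\partial_x^{\gamma+2}N_e)^2$ and $-\tfrac{\epsilon^3H^4}{32}\tfrac{d}{dt}\int\tfrac1{n_e^2n_i}(\partial_x^{\gamma+3}N_e)^2$, i.e.\ precisely the $\tfrac{\epsilon H^2}4$--block in \eqref{equ10}, while the term $\tfrac{1-u_i}\epsilon\partial_x^{\gamma+1}N_e$ contributes $\tfrac{H^2}8\int\partial_x\big(\tfrac{1-u_i}{n_en_i}\big)(\partial_x^{\gamma+1}N_e)^2=O(\epsilon)\|\partial_x^{\gamma+1}N_e\|^2$.

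Everything left over --- the commutators from the $\partial_x^\gamma$'s (Lemma \ref{L9}), the genuinely quadratic interactions, the cubic and quartic terms in $\partial_xN_e$ with their large powers of $\epsilon$, the terms in $N_i$ and $\partial_tN_i$, and the $\mathcal R$--terms --- I would bound by $C_1(1+\epsilon|\!|\!|(N_e,U)|\!|\!|_\epsilon^4)(1+|\!|\!|(N_e,U)|\!|\!|_\epsilon^2)$ using H\"older's inequality, the embedding $H^1\hookrightarrow L^\infty$, Young's inequality, the pointwise bounds $\tfrac12<n_e,n_i<\tfrac32$ and $|u_i|<\tfrac12$, the priori assumption \eqref{priori}, Lemma \ref{L1} to trade norms of $N_i$ for norms of $N_e$, Lemma \ref{L2} for $\|\epsilon\partial_tN_i\|_{H^1}$, Lemma \ref{L3} for norms of $\partial_tN_e$, and Lemma \ref{L8} for the $\mathcal R$--terms. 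The key book-keeping fact is that a term built from $k$ factors of $\partial_x^jN_e$ ($j\ge1$) together with $\epsilon^m$ closes exactly when $m$ is large enough that placing $k-1$ of the factors in $L^\infty$ (by Sobolev) and one in $L^2$ leaves at least the $\epsilon$--weight that derivative count carries in $|\!|\!|\cdot|\!|\!|_\epsilon$ --- which is how the powers of $\epsilon$ in \eqref{rem-1}--\eqref{rem-3} are designed. I expect the main obstacle to be precisely this last point: verifying, term by term and for each $\gamma\le2$, that after the above integrations by parts no uncompensated negative power of $\epsilon$ survives; the conceptual content --- the pairwise cancellation of the $O(\epsilon^{-1})$ terms and the exact identification of the weighted functional in \eqref{equ10} --- is comparatively short once the substitution $N_i\mapsto N_e-\epsilon n_e\partial_x^2N_e+\cdots$ is in place.
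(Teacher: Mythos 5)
Your proposal is sound and rests on exactly the mechanism the paper uses: the paper's own treatment of Proposition \ref{P1-P2} is simply the remark that it follows by the procedure of Proposition \ref{P3}, and your scheme reproduces that procedure — pairwise cancellation of the $O(\epsilon^{-1})$ couplings, and generation of the two weighted blocks (with the correct coefficients, e.g. $\tfrac{\epsilon}{2}$, $\tfrac{\epsilon^{2}H^{2}}{8}$, $\tfrac{\epsilon H^{2}}{8}$, $\tfrac{\epsilon^{2}H^{2}}{8}$, $\tfrac{\epsilon^{3}H^{4}}{32}$) from the $\partial_t$-terms that the quasi-Poisson relation \eqref{rem-3} injects, with Lemmas \ref{L1}--\ref{L3}, \ref{L8} and \ref{L9} absorbing the remainder. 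The only real difference is organizational: the paper never forms an $N_e$-evolution equation — it tests only the momentum equation against $\partial_x^{\gamma}U$, integrates the combined pressure-plus-dispersive coupling by parts to expose $\partial_x^{\gamma+1}U$, substitutes for it from the continuity equation \eqref{rem-1} (which brings in $\epsilon\partial_t\partial_x^{\gamma}N_i$), and then uses $\partial_t$ of \eqref{rem-3} to convert $\partial_t\partial_x^{\gamma}N_i$ into $N_e$-quantities (this is the $K_{12}$ step, Lemma \ref{Lem-u2}) — whereas you eliminate $N_i$ first and run a symmetric two-equation test (momentum against $\partial_x^{\gamma}U$, the $N_i$-eliminated continuity equation against $\tfrac{n_e}{n_i}\partial_x^{\gamma}N_e$), which is the same algebra packaged so that the first bracket of \eqref{equ10} appears directly as a weighted $N_e$-energy. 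One bookkeeping caveat, not a gap: writing $\tfrac{1-u_i}{\epsilon}\partial_xN_e$ in place of $\tfrac{1-u_i}{\epsilon}\partial_xN_i$ pushes into your $\mathcal E$ an $O(1)$ term containing $\partial_x^{3}N_e$ and an $O(\epsilon)$ term containing $\partial_x^{5}N_e$; for $\gamma=2$ these close only after repeated integration by parts exploiting that $x$-derivatives of the coefficients are $O(\epsilon)$, so that each pairing is matched by the $\epsilon$-weights in $|\!|\!|(N_e,U)|\!|\!|_{\epsilon}$ — precisely the term-by-term verification you yourself identify as the bulk of the work.
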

This proposition can be proved after long tedious calculations, which can be done by the same procedure that used in the proof of Proposition \ref{P3}. Hence we omit the details here for simplicity.

\subsection{Third order estimates}\label{2.3}
\begin{proposition}\label{P3}
 Let $(N_{i},N_{e},U)$ be a solution to \eqref{rem}£¬ then
\begin{equation}\label{equ111}
\begin{split}
\frac{\epsilon}{2}\frac{d}{dt}&\|\partial_{x}^{3}U\|^{2}+\frac{\epsilon}{2}\frac{d}{dt}\left[\int\frac{n_{e}}{n_{i}}(\partial_{x}^{3}N_{e})^{2}
+\epsilon\int\frac{n_{e}^{2}}{n_{i}} (\partial_{x}^{4}N_{e})^{2} +\frac{\epsilon^{2}H^{2}}{4} \int\frac{1}{n_{i}}(\partial_{x}^{5}N_{e})^{2} \right]\\
&+\frac{\epsilon}{2}\frac{\epsilon H^{2}}{4}\frac{d}{dt}\left[\int\frac{1}{n_{e}n_{i}}(\partial_{x}^{4}N_{e})^{2}
+\epsilon\int\frac{1}{n_{i}}(\partial_{x}^{5}N_{e})^{2}
+\frac{\epsilon^{2}H^{2}}{4}\int\frac{1}{n_{e}^{2}n_{i}}(\partial_{x}^{6}N_{e})^{2}\right]\\
\leq & C_{1}(1+\epsilon^{2}|\!|\!|(N_{e},U)|\!|\!|_{\epsilon}^{6})(1+|\!|\!|(N_{e},U)|\!|\!|_{\epsilon}^{2}).
\end{split}
\end{equation}
\end{proposition}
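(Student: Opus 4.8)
The plan is to obtain \eqref{equ111} by a weighted energy estimate at the third derivative level, treating the coupled system \eqref{rem-1}--\eqref{rem-3} as a quasilinear perturbation of the linearized (quantum) KdV dynamics. First I would apply $\partial_x^3$ to \eqref{rem-2} and take the $L^2$ inner product with $\epsilon\,\partial_x^3 U$, and apply $\partial_x^3$ to (a suitable rearrangement of) \eqref{rem-3} and pair it against appropriate weighted multiples of $\partial_x^3 N_e$, $\epsilon\,\partial_x^4 N_e$, and $\epsilon^2\,\partial_x^5 N_e$ (mirroring the three inner products used in Lemma \ref{L1} and Lemma \ref{L3}, but now one derivative higher, which is why the extra $\epsilon$ prefactor appears and why the energy in \eqref{equ111} carries the weights $n_e/n_i$, $\epsilon\,n_e^2/n_i$, $\tfrac{\epsilon^2H^2}{4}\,1/n_i$, and the secondary block multiplied by $\tfrac{\epsilon H^2}{4}$). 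The key algebraic point is that the singular transport terms $-\tfrac{1-u_i}{\epsilon}\partial_x(\cdot)$ in \eqref{rem-1}--\eqref{rem-2}, the singular coupling $\tfrac{n_i}{\epsilon}\partial_x U$ and $\tfrac{n_e}{\epsilon}\partial_x N_e$, and the $\tfrac{1}{n_e}\partial_x^3 N_e$ dispersive term must be arranged to cancel in the combined energy identity up to lower-order commutators: the $1/\epsilon$ transport terms produce, after integration by parts, only $\partial_x u_i = O(\epsilon)$ factors that are harmless; the cross terms $\tfrac{n_i}{\epsilon}\partial_x U$ paired with $N_e$ and $\tfrac{n_e}{\epsilon}\partial_x N_e$ paired with $U$ cancel against each other once the density weights $n_e/n_i$ are inserted exactly as in \eqref{equ111}; and the dispersive term, after integrating by parts twice, generates the positive-definite quadratic form $\tfrac{\epsilon^2 H^2}{4}\int \tfrac{1}{n_i}(\partial_x^{5}N_e)^2$ plus time-derivatives of the weights. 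This is precisely why the left-hand side of \eqref{equ111} is a total time derivative of a coercive (for $\epsilon$ small, since $\tfrac12<n_e,n_i<\tfrac32$) energy.

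Next I would collect all the remaining terms on the right-hand side. These are of three types: (i) terms where the time derivative falls on a coefficient $n_e/n_i$, $1/n_e$, etc.; by the analogue of \eqref{one order}--\eqref{two orders} and \eqref{one} these contribute factors of $\epsilon\,\partial_t\tilde n_e$, $\epsilon^3\partial_t N_e$, and one invokes Lemma \ref{L2} and Lemma \ref{L3} to bound $\|\epsilon\partial_t N_i\|_{H^1}$ and hence $\partial_t N_e$ in all the norms appearing in $|\!|\!|(N_e,U)|\!|\!|_\epsilon$; (ii) commutator terms $[\partial_x^3, \text{coefficient}]\,(\cdot)$ arising from the variable coefficients $n_e,n_i,u_i,\tilde n_e,\tilde u_i$ multiplying $\partial_x U$ or $\partial_x N_e$; these are controlled by Lemma \ref{L9}, with the top-order pieces of the coefficients ($\partial_x^3 n_e \sim \epsilon^3\partial_x^3 N_e$, etc.) carrying enough powers of $\epsilon$ that, after Sobolev embedding $H^1\hookrightarrow L^\infty$ and the a priori bound \eqref{priori}, they land inside $C_1(1+\epsilon^2|\!|\!|(N_e,U)|\!|\!|_\epsilon^6)(1+|\!|\!|(N_e,U)|\!|\!|_\epsilon^2)$; (iii) the genuinely nonlinear quantum terms in \eqref{rem-2}--\eqref{rem-3} — the cubic $\tfrac{\epsilon^6}{n_e^3}(\partial_x N_e)^3$, the quartic $\tfrac{3\epsilon^9}{n_e^4}(\partial_x N_e)^4$, the products $(\partial_x N_e)^2\partial_x^2 N_e$, $\partial_x N_e\,\partial_x^3 N_e$, $(\partial_x^2 N_e)^2$, and so on — each of which, differentiated three times, produces a top-order factor times lower-order factors estimated in $L^\infty$; the bookkeeping of powers of $\epsilon$ against powers of $|\!|\!|(N_e,U)|\!|\!|_\epsilon$ is exactly what yields the stated quartic/sextic polynomial on the right. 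The error terms $\mathcal R_1,\mathcal R_2^{1,2,3},\mathcal R_3^{1,2}$ are bounded by constants via Lemma \ref{L8}, and $\mathcal R_2^4,\mathcal R_3^3,\partial_t\mathcal R_2^4,\partial_t\mathcal R_3^3$ contribute the $C\|N_e\|_{H^\alpha}$ and $C\|\partial_t N_e\|_{H^\alpha}$ terms, again absorbed into the triple norm.

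The main obstacle is term type (iii) at top order combined with the apparent loss of derivatives: pairing $\partial_x^3$ of \eqref{rem-3} against $\epsilon^2\partial_x^5 N_e$ brings in $\partial_x^6 N_e$ and even formally $\partial_x^7 N_e$ from the dispersive piece $\tfrac1{n_e}\partial_x^4 N_e$, which is the reason the triple norm \eqref{|||} must carry the $\epsilon^4\|\partial_x^6 N_e\|^2$ weight and no more — one has to check that the highest-order quantum nonlinearities, e.g.\ $\tfrac{\epsilon^3}{n_e^2}\partial_x N_e\,\partial_x^3 N_e$ and $\tfrac{7\epsilon^6}{n_e^3}(\partial_x N_e)^2\partial_x^2 N_e$ after three $x$-derivatives, never actually require $\|\partial_x^7 N_e\|_{L^2}$ with a non-absorbable $\epsilon$-power, and that every genuinely top-order term either integrates by parts into the coercive left side or comes with a surplus power of $\epsilon$. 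I would handle this by the same device already used in Lemma \ref{L1} and Lemma \ref{L3}: always integrate by parts to move a derivative off the highest factor, pay the resulting coefficient-derivative (which is $O(\epsilon)$ or better by \eqref{one order}--\eqref{two orders}), estimate all but one factor in $L^\infty$ via $H^1\hookrightarrow L^\infty$ and \eqref{priori}, and keep the single top-order factor in $L^2$ weighted by exactly the $\epsilon$-power it carries in \eqref{|||}. Once every right-hand-side term is shown to be $\le C_1(1+\epsilon^2|\!|\!|(N_e,U)|\!|\!|_\epsilon^6)(1+|\!|\!|(N_e,U)|\!|\!|_\epsilon^2)$, \eqref{equ111} follows; the remaining (truly lengthy but routine) work is to go through the $\mathcal R$-list term by term, which is the part I would present in compressed form, treating only a few representative terms in detail and noting that the others are analogous, exactly as the paper does for Lemma \ref{L1} and Lemma \ref{L3}.
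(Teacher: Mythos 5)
Your toolkit is the right one (weighted third‑order energy, the commutator estimate of Lemma \ref{L9}, Lemmas \ref{L1}--\ref{L3} and \ref{L8}, and the ``$L^\infty$ on all but one factor'' bookkeeping), and your first step --- $\partial_x^3$ of \eqref{rem-2} tested against $\epsilon\partial_x^3U$ --- is exactly how the paper's proof begins. But the mechanism you propose for producing the bracketed part of the left-hand side of \eqref{equ111} does not work as stated: \eqref{rem-3} is a time-independent (elliptic) constraint, so pairing $\partial_x^3$ of \eqref{rem-3} against weighted multiples of $\partial_x^3N_e$, $\epsilon\partial_x^4N_e$, $\epsilon^2\partial_x^5N_e$ can only yield norm-equivalence statements of the type of Lemma \ref{L1}; it cannot generate any $\frac{d}{dt}$ of the functionals $\int\frac{n_e}{n_i}(\partial_x^3N_e)^2$, $\epsilon\int\frac{n_e^2}{n_i}(\partial_x^4N_e)^2$, etc., because no time derivative of $N_e$ appears anywhere in that equation. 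Likewise, the cross-term cancellation you invoke (``$\frac{n_i}{\epsilon}\partial_xU$ paired with $N_e$ against $\frac{n_e}{\epsilon}\partial_xN_e$ paired with $U$'') is not a closed energy step here, since \eqref{rem-1} evolves $N_i$, not $N_e$: testing it against a weighted $\partial_x^3N_e$ leaves the term $\epsilon\int\partial_t\partial_x^3N_i\cdot\frac{n_e}{n_i}\partial_x^3N_e$, which is not a total time derivative of anything until the constraint is differentiated in time and substituted.

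That double substitution is precisely the missing central idea, and it is what the paper's proof is built around. After integrating the $O(1)$ force terms ($F_1+F_2$, coming from $\frac{n_e}{\epsilon}\partial_xN_e$ and $\frac{H^2}{4}\frac{1}{n_e}\partial_x^3N_e$) by parts onto $\partial_x^4U$, one eliminates $\partial_x^4U$ via \eqref{rem-1}; this produces $K_{12}=-\epsilon\int\bigl(\frac{n_e}{n_i}\partial_x^3N_e-\frac{\epsilon H^2}{4}\frac{1}{n_en_i}\partial_x^5N_e\bigr)\partial_t\partial_x^3N_i$, and then one applies $\partial_t\partial_x^3$ to \eqref{rem-3} and substitutes for $\partial_t\partial_x^3N_i$. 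It is the leading pieces $\partial_t\partial_x^3N_e$, $-\epsilon\partial_t\partial_x^3(n_e\partial_x^2N_e)$ and $\frac{\epsilon^2H^2}{4}\partial_t\partial_x^3(\partial_x^4N_e/n_e)$ of that substitution which, after integration by parts (with $\partial_t$ of the weights controlled via \eqref{use 23} and Lemmas \ref{L2}--\ref{L3}), assemble into the exact time derivatives on the left of \eqref{equ111}; the companion term $K_{11}$ similarly requires substituting $\partial_x^4N_i$ from \eqref{rem-3}. Without this chain the top-order contributions of $F_1+F_2$ cannot be absorbed at all: for instance $\int n_e\,\partial_x^4N_e\,\partial_x^3U$ and $\epsilon\int\frac{1}{n_e}\partial_x^6N_e\,\partial_x^3U$ are each of size $\epsilon^{-3/2}|\!|\!|(N_e,U)|\!|\!|_{\epsilon}^{2}$ if estimated directly against the triple norm \eqref{|||}. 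So the gap is not in the (admittedly lengthy) term-by-term bookkeeping, which you describe correctly, but in the structural step that creates the coercive time-derivative energy in $N_e$; your plan as written would at best reprove Lemma \ref{L1} one derivative higher.
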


\begin{proof}
We take $\partial_{x}^{3}$ of \eqref{rem-2} and then take inner product of $\epsilon\partial_{x}^{3}U$. We obtain
\begin{equation}\label{equ11}
\begin{split}
\frac{\epsilon}{2}\frac{d}{dt}&\|\partial_{x}^{3}U\|^{2}-\int\partial_{x}^{3}\big((1-u_{i})\partial_{x}U\big)\partial_{x}^{3}U
+\epsilon\int\partial_{x}^{3}\big(\partial_{x}\tilde u_{i}U\big)\partial_{x}^{3}U\\
=&-\int\partial_{x}^{3}\big(n_{e}\partial_{x}N_{e}\big)\partial_{x}^{3}U
+\frac{\epsilon H^{2}}{4}\int\partial_{x}^{3}\big(\frac{\partial_{x}^{3}N_{e}}{n_{e}}\big)\partial_{x}^{3}U
-\epsilon\int\partial_{x}^{3}\big(\partial_{x}\tilde n_{e}N_{e}\big)\partial_{x}^{3}U\\
&+\frac{H^{2}}{4}\Big\{3\epsilon^{3}\int\partial_{x}^{3}\big(\frac{(\partial_{x}\tilde{n_{e}})^{2}}{n_{e}^{3}}\partial_{x}N_{e}\big)\partial_{x}^{3}U
-2\epsilon^{2}\int\partial_{x}^{3}\big(\frac{\partial_{x}^{2}\tilde{n_{e}}}{n_{e}^{2}}\partial_{x}N_{e}\big)\partial_{x}^{3}U\\
&+3\epsilon^{5}\int\partial_{x}^{3}\big(\frac{\partial_{x}\tilde{n_{e}}}{n_{e}^{3}}(\partial_{x}N_{e})^{2}\big)\partial_{x}^{3}U
-2\epsilon^{2}\int\partial_{x}^{3}\big(\frac{\partial_{x}\tilde{n_{e}}}{n_{e}^{2}}\partial_{x}^{2}N_{e}\big)\partial_{x}^{3}U\\
&+\epsilon^{7}\int\partial_{x}^{3}\big(\frac{(\partial_{x}N_{e})^{3}}{n_{e}^{3}}\big)\partial_{x}^{3}U
-2\epsilon^{4}\int\partial_{x}^{3}\big(\frac{\partial_{x}N_{e}\partial_{x}^{2}N_{e}}{n_{e}^{2}}\big)\partial_{x}^{3}U\\
&+\epsilon^{2}\int\partial_{x}^{3}\big(\frac{\mathcal{R}_{2}^{3}+\mathcal{R}_{2}^{4}}{n_{e}^{3}}\big)\partial_{x}^{3}U\Big\}
+\epsilon^{2}\int(\partial_{x}^{3}\mathcal{R}_{2}^{1,2})\partial_{x}^{3}U\\
=&:\sum_{i=1}^{11}F_{i} \ .
\end{split}
\end{equation}

\emph{Estimate of the LHS of \eqref{equ11}.} First, we estimate the second term on the LHS of \eqref{equ11}. Using commutator notation \eqref{w} to rewrite it as
\begin{equation*}
\begin{split}
-\int\partial_{x}^{3}\big((1-u_{i}) \partial_{x}U\big)\partial_{x}^{3}U
&=-\int\big([\partial_{x}^{3},1-u_{i}]\partial_{x}U +(1-u_{i})\partial_{x}^{4}U\big)\partial_{x}^{3}U =:M_{1}+M_{2}
\end{split}
\end{equation*}
We first estimate $M_1$. By commutator estimate of Lemma \ref{L9}, we have
\begin{equation*}
\begin{split}
\|[\partial_{x}^{3},1-u_{i}]\partial_{x}U\|
\leq\|\partial_{x}(1-u_{i})\|_{L^{\infty}}\|\partial_{x}^{3}U\|
+\|\partial_{x}^{3}(1-u_{i})\|\|\partial_{x}U\|_{L^{\infty}}.
\end{split}
\end{equation*}
Thus by H\"older inequality, Cauchy inequality and Sobolev embedding theorem $H^1\hookrightarrow L^{\infty}$, we have
\begin{equation}
\begin{split}\label{G2}
| M_{1}|
\leq &\|[\partial_{x}^{3},1-u_{i}]\partial_{x}U\|\|\partial_{x}^{3}U\|\\
\leq &C(1+\epsilon^{2}\|\partial_{x}U\|_{L^{\infty}}^{2})(\epsilon\|\partial_{x}^{3}U\|^{2})
+C\epsilon(1+\epsilon^{2}\|\partial_{x}^{3}U\|^{2})(\|\partial_{x}U\|_{L^{\infty}}^{2}+\epsilon\|\partial_{x}^{3}U\|^{2})\\
\leq &C_{1}\big(1+\epsilon^{2}(\|U\|_{H^{2}}^{2}+\epsilon\|\partial_{x}^{3}U\|^{2})\big)(\|U\|_{H^{2}}^{2}+\epsilon\|\partial_{x}^{3}U\|^{2})\\
\leq &C_{1}(1+\epsilon^{2}|\!|\!|(N_{e},U)|\!|\!|_{\epsilon}^{2})|\!|\!|(N_{e},U)|\!|\!|_{\epsilon}^{2},
\end{split}
\end{equation}
where $|\!|\!|(N_{e},U)|\!|\!|_{\epsilon}^{2}$ is given in \eqref{|||}. Next, we estimate $M_{2}$. By integration by parts, we have
\begin{equation}
\begin{split}\label{G3}
|M_{2}|&=|\frac{1}{2}\int\partial_{x}(1-u_{i})(\partial_{x}^{3}U)^{2}|\\
&\leq C(1+\epsilon^{2}\|\partial_{x}U\|_{L^{\infty}})(\epsilon\|\partial_{x}^{3}U\|^{2})\\
&\leq C(1+\epsilon^{2}\|U\|_{H^{2}})(\epsilon\|\partial_{x}^{3}U\|^{2}),
\end{split}
\end{equation}
where we have used Sobolev embedding theorem $H^1\hookrightarrow L^{\infty}$. In light of \eqref{G2} and \eqref{G3}, we find the second term on the LHS of \eqref{equ11} can be bounded by $C(1+\epsilon^{2}|\!|\!|(N_{e}, U)|\!|\!|_{\epsilon}^{2})|\!|\!|(N_{e}, U)|\!|\!|_{\epsilon}^{2}$. The third term on the LHS of \eqref{equ11} is bilinear in the unknowns and can be bounded by
\begin{equation*}
\begin{split}
\epsilon\int\partial_{x}^{3}(\partial_{x}\tilde u_{i}U)\partial_{x}^{3}U
\leq C(\|U\|_{H^{2}}^{2}+\epsilon\|\partial_{x}^{3}U\|^{2})
\leq C|\!|\!|(N_{e},U)|\!|\!|_{\epsilon}^{2}.
\end{split}
\end{equation*}
Next, we estimate of the RHS of \eqref{equ11}. We first estimate the terms $F_{i}$ for $3\leq i\leq11$.

\emph{Estimate of $F_{3}$.} Since $F_{3}$ is bilinear in the unknowns, it can be bounded by
\begin{equation*}
\begin{split}
F_{3}\le C(\|N_{e}\|_{H^{2}}^{2}+\epsilon\|\partial_{x}^{3}N_{e}\|^{2}+\epsilon\|\partial_{x}^{3}U\|^{2}),
\end{split}
\end{equation*}
where we have used Cauchy inequality.

\emph{Estimate of $F_{8}$.} Using commutator notation \eqref{w}, we write
\begin{equation*}
\begin{split}
F_{8}=\frac{\epsilon^{7}H^{2}}{4}\int\Big\{[\partial_{x}^{3},\frac{1}{n_{e}^{3}}](\partial_{x}N_{e})^{3}
+\frac{1}{n_{e}^{3}}\partial_{x}^{3}\left((\partial_{x}N_{e})^{3}\right)\Big\}\partial_{x}^{3}U
=:F_{81}+F_{82}.
\end{split}
\end{equation*}
By commutator estimates \eqref{w11} in Lemma \ref{L9}, we have
\begin{equation*}
\begin{split}
\|[\partial_{x}^{3},\frac{1}{n_{e}^{3}}](\partial_{x}N_{e})^{3}\|
\leq\|\partial_{x}(\frac{1}{n_{e}^{3}})\|_{L^{\infty}}\|\partial_{x}^{2}\left((\partial_{x}^{3}N_{e})^{3}\right)\|
+\|\partial_{x}^{3}(\frac{1}{n_{e}^{3}})\|\|\partial_{x}N_{e}\|_{L^{\infty}}^{3}.
\end{split}
\end{equation*}
By direct computation and Sobolev embedding theorem, we note that
\begin{equation}
\begin{split}\label{use 1}
\|\partial_{x}(\frac{1}{n_{e}^{3}})\|_{L^{\infty}}
\leq C(\epsilon+\epsilon^{3}\|\partial_{x}N_{e}\|_{L^{\infty}})
\leq C(\epsilon+\epsilon^{3}\|N_{e}\|_{H^{2}}),
\end{split}
\end{equation}
and
\begin{equation}
\begin{split}\label{use 2}
|\partial_{x}^{3}(\frac{1}{n_{e}^{3}})|
\leq &C(\epsilon+\epsilon^{3}(|\partial_{x}N_{e}|+|\partial_{x}^{2}N_{e}|+|\partial_{x}^{3}N_{e}|)\\
&+\epsilon^{6}(|\partial_{x}N_{e}|^{2} +|\partial_{x}N_{e}||\partial_{x}^{2}N_{e}|) +\epsilon^{9}|\partial_{x}N_{e}|^{3}),
\end{split}
\end{equation}
which yields that
\begin{equation}
\begin{split}\label{use 21}
\|\partial_{x}^{3}(\frac{1}{n_{e}^{3}})\|
\leq &C\big(\epsilon+\epsilon^{3}(\|\partial_{x}N_{e}\|+\|\partial_{x}^{2}N_{e}\|+\|\partial_{x}^{3}N_{e}\|)\\
&+\epsilon^{6}(\|\partial_{x}N_{e}\|_{L^{\infty}}\|\partial_{x}N_{e}\|+\|\partial_{x}N_{e}\|_{L^{\infty}}\|\partial_{x}^{2}N_{e}\|)\\
&+\epsilon^{9}\|\partial_{x}N_{e}\|_{L^{\infty}}^{2}\|\partial_{x}N_{e}\|\big).
\end{split}
\end{equation}
By direct computation, we have
\begin{equation}
\begin{split}\label{use 9}
\|\partial_{x}^{2}\left[(\partial_{x}N_{e})^{3}\right]\|
\leq C(\|\partial_{x}^{2}N_{e}\|_{L^{\infty}}^{2}\|\partial_{x}N_{e}\|+\|\partial_{x}N_{e}\|_{L^{\infty}}^{2}\|\partial_{x}^{3}N_{e}\|).
\end{split}
\end{equation}
Therefore, by \eqref{use 1},\ \eqref{use 21}\ and \eqref{use 9}, and using H\"older inequality and Sobolev embedding $H^1\hookrightarrow L^{\infty}$, we can obtain
\begin{equation}
\begin{split}\label{s1}
F_{81}
\leq C_{1}(1+\epsilon^{2}|\!|\!|(N_{e},U)|\!|\!|_{\epsilon}^{6})(1+|\!|\!|(N_{e},U)|\!|\!|_{\epsilon}^{2}).
\end{split}
\end{equation}
On the other hand, by direct computation, we have
\begin{equation}
\begin{split}\label{use 10}
\|\partial_{x}^{3}\left[(\partial_{x}N_{e})^{3}\right]\|
\leq &C(\|\partial_{x}^{2}N_{e}\|_{L^{\infty}}^{2} \|\partial_{x}^{2}N_{e}\| \\
&+\|\partial_{x}N_{e}\|_{L^{\infty}} \|\partial_{x}^{2}N_{e}\|_{L^{\infty}} \|\partial_{x}^{3}N_{e}\| +\|\partial_{x}N_{e}\|_{L^{\infty}}^{2} \|\partial_{x}^{4}N_{e}\|).
\end{split}
\end{equation}
By applying H\"older inequality and Sobolev embedding theorem again, we have
\begin{equation}
\begin{split}\label{s2}
F_{82}
\leq C_{1}(1+\epsilon^{2}|\!|\!|(N_{e},U)|\!|\!|_{\epsilon}^{2})|\!|\!|(N_{e},U)|\!|\!|_{\epsilon}^{2}.
\end{split}
\end{equation}
Adding the estimates \eqref{s1} and \eqref{s2}, we have
\begin{equation*}
\begin{split}
F_{8}\leq C_{1}(1+\epsilon^{2}|\!|\!|(N_{e},U)|\!|\!|_{\epsilon}^{5})|\!|\!|(N_{e},U)|\!|\!|_{\epsilon}^{2}.
\end{split}
\end{equation*}
The estimates of $F_{4}\sim F_{6}$ are similar to $F_{8}$ and can be bounded by
\begin{equation*}
\begin{split}
F_{4\sim6}\leq C_{1}(1+\epsilon^{2}|\!|\!|(N_{e},U)|\!|\!|_{\epsilon}^{4})|\!|\!|(N_{e},U)|\!|\!|_{\epsilon}^{2}.
\end{split}
\end{equation*}

\emph{Estimate of $F_{9}$.} Using commutator notation \eqref{w}, we have
\begin{equation*}
\begin{split}
F_{9}=\frac{\epsilon^{2}H^{2}}{2}\int
\Big\{[\partial_{x}^{3},\frac{1}{n_{e}^{2}}]\partial_{x}N_{e}\partial_{x}^{2}N_{e}
+\frac{1}{n_{e}^{2}}\partial_{x}^{3}(\partial_{x}N_{e}\partial_{x}^{2}N_{e})\Big\}\partial_{x}^{3}U
=:F_{91}+F_{92}.
\end{split}
\end{equation*}
By commutator estimates \eqref{w11} of Lemma \ref{L9}, we have
\begin{equation*}
\begin{split}
\|[\partial_{x}^{3},\frac{1}{n_{e}^{2}}]\partial_{x}N_{e}\partial_{x}^{2}N_{e}\|
\leq\|\partial_{x}(\frac{1}{n_{e}^{2}})\|_{L^{\infty}}\|\partial_{x}^{2}\left(\partial_{x}N_{e}\partial_{x}^{2}N_{e}\right)\|
+\|\partial_{x}^{3}(\frac{1}{n_{e}^{2}})\|\|\partial_{x}N_{e}\partial_{x}^{2}N_{e}\|_{L^{\infty}}.
\end{split}
\end{equation*}
By direct computation, we note that $\|\partial_{x}(\frac{1}{n_{e}^{2}})\|_{L^{\infty}}$, $|\partial_{x}^{3}(\frac{1}{n_{e}^{2}})|$ and $\|\partial_{x}^{3}(\frac{1}{n_{e}^{2}})\|$ have similar estimates to \eqref{use 1}, \eqref{use 2} and \eqref{use 21}. Hence we have
\begin{equation}
\begin{split}\label{use 9'}
\|\partial_{x}^{2}(\partial_{x}N_{e}\partial_{x}^{2}N_{e})\|
\leq C(\|\partial_{x}N_{e}\|_{L^{\infty}}\|\partial_{x}^{4}N_{e}\|+\|\partial_{x}^{2}N_{e}\|_{L^{\infty}}\|\partial_{x}^{3}N_{e}\|).
\end{split}
\end{equation}
Therefore, by H\"older inequality and Sobolev embedding theorem, we obtain
\begin{equation}
\begin{split}\label{s3}
F_{91}
\leq C_{1}(1+\epsilon^{2}|\!|\!|(N_{e},U)|\!|\!|_{\epsilon}^{4})|\!|\!|(N_{e},U)|\!|\!|_{\epsilon}^{2},
\end{split}
\end{equation}
where we have used \eqref{use 1}, \eqref{use 2} and \eqref{use 21}. On the other hand,
\begin{equation*}
\begin{split}\label{use 10'}
\|\partial_{x}^{3}(\partial_{x}N_{e} \partial_{x}^{2}N_{e})\| \leq C(\|\partial_{x}N_{e}\|_{L^{\infty}} \|\partial_{x}^{5}N_{e}\| +\|\partial_{x}^{2}N_{e}\|_{L^{\infty}} \|\partial_{x}^{4}N_{e}\| +\|\partial_{x}^{3}N_{e}\|_{L^{\infty}} \|\partial_{x}^{3}N_{e}\|).
\end{split}
\end{equation*}
Therefore, by applying H\"older inequality again, we have
\begin{equation}
\begin{split}\label{s4}
F_{92}
\leq C_{1}(1+\epsilon^{2}|\!|\!|(N_{e},U)|\!|\!|_{\epsilon}^{2})|\!|\!|(N_{e},U)|\!|\!|_{\epsilon}^{2}.
\end{split}
\end{equation}
Adding the estimates \eqref{s3} and \eqref{s4}, we have
\begin{equation*}
\begin{split}
F_{9}\leq C_{1}(1+\epsilon^{2}|\!|\!|(N_{e},U)|\!|\!|_{\epsilon}^{4})|\!|\!|(N_{e},U)|\!|\!|_{\epsilon}^{2}.
\end{split}
\end{equation*}
$F_{7}$ is similar to $F_{9}$.
From equation \eqref{equ28} and \eqref{equ29} in Lemma \ref{L8}, we can obtain
\begin{equation*}
\begin{split}
&F_{10}\leq C_{1}(1+\epsilon^{2}|\!|\!|(N_{e},U)|\!|\!|_{\epsilon}^{2})|\!|\!|(N_{e},U)|\!|\!|_{\epsilon}^{2},\\
&F_{11}\leq C_{1}(1+\epsilon|\!|\!|(N_{e},U)|\!|\!|).
\end{split}
\end{equation*}

\emph{Estimate of $F_{1}+F_{2}$.} By direct computation, we have

\begin{equation*}
\begin{split}
F_{1}+F_{2}=&\int\left\{\partial_{x}^{2}(n_{e}\partial_{x}N_{e})-\frac{\epsilon H^{2}}{4}\partial_{x}^{2}\left(\frac{\partial_{x}^{3}N_{e}}{n_{e}}\right)\right\}\partial_{x}^{4}U\\
=&\int\left(n_{e}\partial_{x}^{3}N_{e}-\frac{\epsilon H^{2}}{4}\frac{\partial_{x}^{5}N_{e}}{n_{e}}\right)\partial_{x}^{4}U
+\int\left(\sum_{\alpha=1}^{2}C_{2}^{\alpha}\partial_{x}^{\alpha}n_{e}\partial_{x}^{3-\alpha}N_{e}\right)\partial_{x}^{4}U\\
&-\frac{\epsilon H^{2}}{4}\int\left(\sum_{\beta=1}^{2}C_{2}^{\beta} \partial_{x}^{\beta}\left(\frac{1}{n_{e}}\right)\partial_{x}^{5-\beta}N_{e}\right)\partial_{x}^{4}U\\
=&:\sum_{i=1}^{3}K_{i}.
\end{split}
\end{equation*}

\emph{Estimates of $K_{2}$ and $K_{3}$.} By integration by parts, we have
\begin{equation*}
\begin{split}
K_{2}=-\int\big(\sum_{\alpha=1}^{2}C_{2}^{\alpha}\partial_{x}^{\alpha+1}n_{e}\partial_{x}^{3-\alpha}N_{e}
+\sum_{\alpha=1}^{2}C_{2}^{\alpha}\partial_{x}^{\alpha}n_{e}\partial_{x}^{4-\alpha}N_{e}\big)\partial_{x}^{3}U.
\end{split}
\end{equation*}

\begin{equation*}
\begin{split}
K_{3}=\frac{\epsilon H^{2}}{4}\int\big(\sum_{\beta=1}^{2}C_{2}^{\beta}\partial_{x}^{\beta+1}(\frac{1}{n_{e}})\partial_{x}^{5-\beta}N_{e}
+\sum_{\beta=1}^{2}C_{2}^{\beta}\partial_{x}^{\beta}\frac{1}{n_{e}}\partial_{x}^{6-\beta}N_{e}\big)\partial_{x}^{3}U.
\end{split}
\end{equation*}
For $K_{2}$, we have
\begin{equation*}
\begin{split}
K_{2}\leq C_{1}(1+\epsilon^{2}|\!|\!|(N_{e},U)|\!|\!|_{\epsilon}^{2})|\!|\!|(N_{e},U)|\!|\!|_{\epsilon}^{2}.
\end{split}
\end{equation*}
Combining \eqref{one order},\ \eqref{two orders}\ and \eqref{use 2}, we obtain
\begin{equation*}
\begin{split}
K_{3}\leq C_{1}(1+\epsilon^{2}|\!|\!|(N_{e}, U)|\!|\!|_{\epsilon}^{3})|\!|\!|(N_{e}, U)|\!|\!|_{\epsilon}^{2},
\end{split}
\end{equation*}
where we have used H\"older inequality and Sobolev embedding theorem.

\emph{Estimates of $K_{1}$.} By $\eqref{rem-1}$, we have
\begin{equation*}
\begin{split}
\partial_{x}^{4}U =&\frac{1}{n_{i}}\Big\{\partial_{x}^{3}((1-u_{i})\partial_{x}N_{i})-\epsilon\partial_{t}\partial_{x}^{3}N_{i}\\
&-\sum_{\beta=1}^{3}C_{3}^{\beta}\partial_{x}^{\beta}n_{i}\partial_{x}^{4-\beta}U
-\epsilon\partial_{x}^{3}(\partial_{x}\tilde{n_{i}}U)-\epsilon\partial_{x}^{3}(\partial_{x}\tilde{u_{i}}N_{i})
-\epsilon^{2}\partial_{x}^{3}\mathcal{R}_{1}\Big\}\\
=&:\sum_{i=1}^{6}E_{i} \ .
\end{split}
\end{equation*}
Accordingly, $K_{1}$ is decomposed into
\begin{equation}\label{biaohao}
\begin{split}
K_{1}=\sum_{i=1}^{6}\int(n_{e}\partial_{x}^{3}N_{e}-\frac{\epsilon H^{2}}{4}\frac{1}{n_{e}}\partial_{x}^{5}N_{e})E_{i}=:\sum_{i=1}^{6}K_{1i}.
\end{split}
\end{equation}
We first estimate the terms $K_{1i}$ for $3\leq i\leq6$ and leave $K_{11}$ and $K_{12}$ in the next two subsections. By Lemma \ref{Lem-u1} and Lemma \ref{Lem-u2} in the next two subsections, we have
\begin{equation*}
\begin{split}
K_{11}\leq C_{1}(1+\epsilon^{2}|\!|\!|(N_{e},U)|\!|\!|_{\epsilon}^{6})(1+|\!|\!|(N_{e},U)|\!|\!|_{\epsilon}^{2}),\\
\end{split}
\end{equation*}
\begin{equation*}
\begin{split}
K_{12}\leq &-\frac\epsilon2\frac{d}{dt}\left[\int\frac{n_{e}}{n_{i}}(\partial_{x}^{3}N_{e})^{2}
+\epsilon\int\frac{n_{e}^{2}}{n_{i}}(\partial_{x}^{4}N_{e})^{2}
+\frac{\epsilon^{2}H^{2}}{4}\int\frac{1}{n_{i}}(\partial_{x}^{5}N_{e})^{2}\right]\\
&-\frac\epsilon2\frac{\epsilon H^{2}}{4}\frac{d}{dt}\left[\int\frac{1}{n_{e}n_{i}}(\partial_{x}^{4}N_{e})^{2}
+\epsilon\int\frac{1}{n_{i}}(\partial_{x}^{5}N_{e})^{2}
+\frac{\epsilon^{2}H^{2}}{4}\int\frac{1}{(n_{e})^{2}n_{i}}(\partial_{x}^{6}N_{e})^{2}\right]\\
&+C_1(1+\epsilon^2|\!|\!|(N_{e},U)|\!|\!|^{6}_{\epsilon})(1+|\!|\!|(N_{e},U)|\!|\!|^2_{\epsilon}).
\end{split}
\end{equation*}

\emph{Estimate of $K_{13}$.} It can be decomposed that
\begin{equation*}
\begin{split}
K_{13}=\int\left(\frac{n_{e}}{n_{i}}\partial_{x}^{3}N_{e}-\frac{\epsilon H^{2}}{4}\frac{1}{n_{e}n_{i}}\partial_{x}^{5}N_{e}\right)
\sum_{\beta=1}^{3}C_{3}^{\beta}\partial_{x}^{\beta}n_{i}\partial_{x}^{4-\beta}U.
\end{split}
\end{equation*}
When $\beta=1,2$, $K_{13}$ can be easily bounded by $C_{1}(1+\epsilon^{2}|\!|\!|(N_{e},U)|\!|\!|_{\epsilon}^{2})|\!|\!|(N_{e},U)|\!|\!|_{\epsilon}^{2}$ by H\"older inequality, Cauchy inequality and Lemma \ref{L1}.
When $\beta=3$, by integration by parts, we have
\begin{equation*}
\begin{split}
K_{13}=&-\int\big(\partial_{x}(\frac{n_{e}}{n_{i}})\partial_{x}^{3}N_{e}
-\frac{\epsilon H^{2}}{4}\partial_{x}(\frac{1}{n_{e}n_{i}})\partial_{x}^{5}N_{e}\big)
\partial_{x}^{2}n_{i}\partial_{x}U\\
&-\int\big(\frac{n_{e}}{n_{i}}\partial_{x}^{4}N_{e}
-\frac{\epsilon H^{2}}{4}\frac{1}{n_{e}n_{i}}\partial_{x}^{6}N_{e}\big)
\partial_{x}^{2}n_{i}\partial_{x}U\\
&-\int\big(\frac{n_{e}}{n_{i}}\partial_{x}^{3}N_{e}
-\frac{\epsilon H^{2}}{4}\frac{1}{n_{e}n_{i}}\partial_{x}^{5}N_{e}\big)
\partial_{x}^{2}n_{i}\partial_{x}^{2}U\\
=&:K_{131}+K_{132}+K_{133}.
\end{split}
\end{equation*}

By direct computation, we have
\begin{equation}
\begin{split}\label{use 11}
\left|\partial_{x}\left(\frac{1}{n_{e}n_{i}}\right)\right|, \ \  \left|\partial_{x}\left(\frac{n_{e}}{n_{i}}\right)\right|\leq C\left(\epsilon+\epsilon^{3}(|\partial_{x}N_{e}| +|\partial_{x}N_{i}|)\right).
\end{split}
\end{equation}

Therefore, by H\"older inequality, Sobolev embedding $H^1\hookrightarrow L^{\infty}$ and Lemma \ref{L1}, we have
\begin{equation*}
\begin{split}
K_{131}\leq C_{1}&\left(1+\epsilon^{2}(\|\partial_{x}N_{e}\|_{L^{\infty}}^{2}+\|\partial_{x}N_{i}\|_{L^{\infty}}^{2}
+\|\partial_{x}U\|_{L^{\infty}}^{2})\right) \left(\epsilon\|\partial_{x}^{3}N_{e}\|^{2} +\|\partial_{x}^{2}N_{i}\|^{2} +\epsilon^{3}\|\partial_{x}^{5}N_{e}\|^{2}\right)\\
\leq C_{1}&\big(1+\epsilon^{2}(\|N_{e}\|_{H^{2}}^{2}+\|N_{i}\|_{H^{2}}^{2}
+\|U\|_{H^{2}}^{2})\big)(\epsilon\|\partial_{x}^{3}N_{e}\|^{2}+\|\partial_{x}^{2}N_{i}\|^{2}+\epsilon^{3}\|\partial_{x}^{5}N_{e}\|^{2})\\
\leq C_{1}&(1+\epsilon^{2}|\!|\!|(N_{e},U)|\!|\!|_{\epsilon}^{2})|\!|\!|(N_{e},U)|\!|\!|_{\epsilon}^{2}.
\end{split}
\end{equation*}
On the other hand, by H\"older inequality and Lemma \ref{L1}, $K_{132}\ and\ K_{133}$ can be bounded by $C_{1}(1+\epsilon^{2}|\!|\!|(N_{e}, U)|\!|\!|_{\epsilon}^{2})|\!|\!|(N_{e},U)|\!|\!|_{\epsilon}^{2}$. Thus $K_{13}$ is bounded by $C_{1}(1+\epsilon^{2}|\!|\!|(N_{e}, U)|\!|\!|_{\epsilon}^{2})|\!|\!|(N_{e}, U)|\!|\!|_{\epsilon}^{2}$.

\emph{Estimate of $K_{14}$.} By H\"older inequality and Lemma \ref{L1}, $K_{14}$ can be bounded easily by $C_{1}|\!|\!|(N_{e},U)|\!|\!|_{\epsilon}^{2}$.

\emph{Estimate of $K_{15}$.} By applying integration by parts and \eqref{use 11}, we have
\begin{equation*}
\begin{split}
K_{15}=&-\epsilon\int\big(\partial_{x}(\frac{n_{e}}{n_{i}})\partial_{x}^{3}N_{e}
-\frac{\epsilon H^{2}}{4}\partial_{x}(\frac{1}{n_{e}n_{i}})\partial_{x}^{5}N_{e}\big)
\partial_{x}^{2}(\partial_{x}\tilde{u_{i}}N_{i})\\
&-\epsilon\int\big(\frac{n_{e}}{n_{i}}\partial_{x}^{4}N_{e}
-\frac{\epsilon H^{2}}{4}\frac{1}{n_{e}n_{i}}\partial_{x}^{6}N_{e}\big)
\partial_{x}^{2}(\partial_{x}\tilde{u_{i}}N_{i})\\
\leq &C_{1}(1+\epsilon^{2}|\!|\!|(N_{e}, U)|\!|\!|_{\epsilon}^{2})|\!|\!|(N_{e}, U)|\!|\!|_{\epsilon}^{2},
\end{split}
\end{equation*}
where we have used H\"older inequality and the Lemma \ref{L1}.

\emph{Estimate of $K_{16}$.} Since $\mathcal{R}_{1}$ is known, thus we have
\begin{equation*}
\begin{split}
K_{16}\leq C_{1}(\epsilon\|\partial_{x}^{3}N_{e}\|+\epsilon^{3}\|\partial_{x}^{5}N_{e}\|).
\end{split}
\end{equation*}

Summarizing all the estimates, we complete the proof of Proposition \ref{P3}.
\end{proof}

\subsection{Estimate of $K_{11}$}\label{2.4}
Next we estimate $K_{11}$ in \eqref{biaohao}.
\begin{lemma}[\textbf{\emph{Estimate of $K_{11}$}}]\label{Lem-u1}
Let $(N_{i},N_{e},U)$ be a solution to \eqref{rem}£¬ then
\begin{equation*}
\begin{split}
K_{11}
\leq C_{1}(1+\epsilon^{2}|\!|\!|(N_{e},U)|\!|\!|_{\epsilon}^{6})(1+|\!|\!|(N_{e},U)|\!|\!|_{\epsilon}^{2}).
\end{split}
\end{equation*}
\end{lemma}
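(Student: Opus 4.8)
Recall from \eqref{biaohao} that
\[
K_{11}=\int\Big(n_{e}\partial_{x}^{3}N_{e}-\frac{\epsilon H^{2}}{4}\frac{1}{n_{e}}\partial_{x}^{5}N_{e}\Big)\frac{1}{n_{i}}\partial_{x}^{3}\big((1-u_{i})\partial_{x}N_{i}\big).
\]
The plan is to reduce all of its terms to quantities already controlled by $|\!|\!|(N_{e},U)|\!|\!|_{\epsilon}$ via Lemmas \ref{L1}--\ref{L3}, Lemma \ref{L8}, the commutator estimate of Lemma \ref{L9}, Sobolev embedding $H^{1}\hookrightarrow L^{\infty}$ and the a priori bound \eqref{priori}. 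Expanding $\partial_{x}^{3}\big((1-u_{i})\partial_{x}N_{i}\big)$ by the Leibniz rule, the single genuinely dangerous contribution is the term in which all three derivatives hit $\partial_{x}N_{i}$, namely $\int\big(n_{e}\partial_{x}^{3}N_{e}-\frac{\epsilon H^{2}}{4}\frac{1}{n_{e}}\partial_{x}^{5}N_{e}\big)\frac{1-u_{i}}{n_{i}}\,\partial_{x}^{4}N_{i}$: here $\partial_{x}^{4}N_{i}$ lies beyond the range of Lemma \ref{L1} (only $\|\partial_{x}^{\alpha}N_{i}\|$ with $\alpha\le2$ is bounded by $|\!|\!|(N_{e},U)|\!|\!|_{\epsilon}$), while $\partial_{x}^{5}N_{e}$ enters the triple norm only with weight $\epsilon^{3}$. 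Every other Leibniz term carries at least one derivative of $1-u_{i}=1-\epsilon\tilde{u_{i}}-\epsilon^{3}U$, hence a factor $O(\epsilon)$, and involves $\partial_{x}^{j}N_{i}$ with $j\le3$; after applying $\partial_{x}^{3}$ to \eqref{rem-3} when $j=3$ and Lemma \ref{L1} when $j\le2$ (and Lemma \ref{L8} for the $\mathcal{R}_{1}$ piece), all of these are strictly subordinate to the dangerous term and are handled as below.

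For the dangerous term I would integrate by parts once in $x$, moving one derivative off $\partial_{x}^{4}N_{i}$ onto the prefactor; this replaces $\partial_{x}^{4}N_{i}$ by $\partial_{x}^{3}N_{i}$ while raising the top $N_{e}$-derivative in the prefactor from $\partial_{x}^{5}$ to $\partial_{x}^{6}$. Then I would substitute $\partial_{x}^{3}N_{i}$ using $\partial_{x}^{3}$ applied to \eqref{rem-3}, which, after solving that relation for $N_{i}$, yields
\[
\partial_{x}^{3}N_{i}=\partial_{x}^{3}N_{e}-\epsilon\,\partial_{x}^{3}\big(n_{e}\partial_{x}^{2}N_{e}\big)+\frac{\epsilon^{2}H^{2}}{4}\partial_{x}^{3}\Big(\frac{\partial_{x}^{4}N_{e}}{n_{e}}\Big)+\epsilon^{2}\,(\cdots),
\]
where the remainder $(\cdots)$ involves at most $\partial_{x}^{6}N_{e}$ with coefficients of size $O(\epsilon)$. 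After this substitution the integrand is a polynomial in $N_{e}$ and its derivatives up to $\partial_{x}^{7}N_{e}$ in which every derivative above $\partial_{x}^{3}N_{e}$ carries an explicit power of $\epsilon$, coming from the prefactor or from the substitution. I would then symmetrize each product $\partial_{x}^{a}N_{e}\,\partial_{x}^{b}N_{e}$ by a further integration by parts into terms of the form $\int(\partial_{x}^{\ell}\,\text{coeff})\,(\partial_{x}^{c}N_{e})^{2}$, possibly producing along the way terms $\int(\text{coeff})\,(\partial_{x}^{c}N_{e})^{2}$ with $c\le5$ that already carry enough powers of $\epsilon$. Since every coefficient here is a constant plus an $O(\epsilon)$ correction built from $n_{e},n_{i},u_{i}$, each of its $x$-derivatives is $O(\epsilon)$ by Sobolev embedding and \eqref{priori}, and the weights $\epsilon^{2}\|\partial_{x}^{4}N_{e}\|^{2}$, $\epsilon^{3}\|\partial_{x}^{5}N_{e}\|^{2}$, $\epsilon^{4}\|\partial_{x}^{6}N_{e}\|^{2}$ built into $|\!|\!|(N_{e},U)|\!|\!|_{\epsilon}$ in \eqref{|||} are exactly what is needed to absorb every surviving term. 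The cubic-and-higher coefficients coming from powers of $1/n_{e}$ contribute extra factors $\|\partial_{x}N_{e}\|_{L^{\infty}}^{k}\le C\|N_{e}\|_{H^{2}}^{k}\le C|\!|\!|(N_{e},U)|\!|\!|_{\epsilon}^{k}$, which together with the $\epsilon$-bookkeeping give precisely the stated bound $C_{1}(1+\epsilon^{2}|\!|\!|(N_{e},U)|\!|\!|_{\epsilon}^{6})(1+|\!|\!|(N_{e},U)|\!|\!|_{\epsilon}^{2})$.

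The main obstacle — the only step that is not mechanical bookkeeping — is exactly the interaction between the uncontrolled $\partial_{x}^{4}N_{i}$ and the $\epsilon^{3}$-weighted $\partial_{x}^{5}N_{e}$: a direct Cauchy--Schwarz estimate costs a factor $\epsilon^{-3/2}$ and fails, so one must carry out the integration-by-parts/\eqref{rem-3}-substitution scheme with exactly the right number of integrations by parts, so that the highest-order terms come out either as total $x$-derivatives of squares (hence controlled by an $O(\epsilon)$ coefficient times a weighted square in the triple norm) or as balanced products whose two highest derivatives jointly fit the triple norm. One must also be careful that each $\partial_{x}$ applied to \eqref{rem-3} raises the maximal $N_{e}$-derivative, so the substitution of $\partial_{x}^{3}N_{i}$ may be used only once and must be immediately followed by the symmetrizing integration by parts; a fourth differentiation of \eqref{rem-3} would produce $\partial_{x}^{8}N_{e}$, which is not controlled by $|\!|\!|(N_{e},U)|\!|\!|_{\epsilon}$. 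Apart from this, the computation is a lengthy repetition of the estimates already made for $F_{8}$, $F_{9}$ and $K_{13}$ in the proof of Proposition \ref{P3}.
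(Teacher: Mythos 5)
Your proposal is correct and follows essentially the same route as the paper: expand $\partial_{x}^{3}\big((1-u_{i})\partial_{x}N_{i}\big)$ by Leibniz, dispose of the lower-order terms via Lemma \ref{L1}, Sobolev embedding and the a priori bound, and handle the dangerous term $(1-u_{i})\partial_{x}^{4}N_{i}$ by substituting the relation \eqref{rem-3} and then symmetrizing with integrations by parts and commutator estimates so that the $\epsilon$-weights in \eqref{|||} absorb the top derivatives. The only differences are organizational — the paper substitutes $\partial_{x}^{4}$ of \eqref{rem-3} first and integrates by parts afterwards (so your remark that a fourth differentiation is inadmissible is unnecessary: the resulting $\partial_{x}^{8}N_{e}$ is likewise removed by parts), and the term you attribute to $\mathcal{R}_{1}$ is actually the $\mathcal{R}_{3}^{1,2,3}$ contributions covered by Lemma \ref{L8}.
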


\begin{proof}
Recall that in \eqref{biaohao},
\begin{equation*}
\begin{split}
K_{11}&=\int(\frac{n_{e}}{n_{i}}\partial_{x}^{3}N_{e}-\frac{\epsilon H^{2}}{4}\frac{1}{n_{e}n_{i}}\partial_{x}^{5}N_{e})\partial_{x}^{3}\big((1-u_{i})\partial_{x}N_{i})\big)\\
&=\int(\frac{n_{e}}{n_{i}}\partial_{x}^{3}N_{e}-\frac{\epsilon H^{2}}{4}\frac{1}{n_{e}n_{i}}\partial_{x}^{5}N_{e})
\sum_{\gamma=0}^{3}C_{3}^{\gamma}\partial_{x}^{3-\gamma}(1-u_{i})\partial_{x}^{\gamma+1}N_{i}.
\end{split}
\end{equation*}
When $\gamma=0,1$, by H\"older inequality, Sobolev embedding $H^1\hookrightarrow L^{\infty}$ and Lemma \ref{L1},
\begin{equation*}
\begin{split}
K_{11}|_{\gamma=0,1}&\leq C_{1}(1+\epsilon^{2}(\epsilon\|\partial_{x}^{3}U\|^{2}))
(\|N_{i}\|_{H^{2}}^{2}+\epsilon\|\partial_{x}^{3}N_{e}\|^{2}+\epsilon^{3}\|\partial_{x}^{5}N_{e}\|^{2})\\
&\leq C_{1}(1+\epsilon^{2}|\!|\!|(N_{e},U)|\!|\!|_{\epsilon}^{2})(1+|\!|\!|(N_{e},U)|\!|\!|_{\epsilon}^{2}).
\end{split}
\end{equation*}

By integration by parts for $\gamma=2$ and \eqref{use 11}, we have
\begin{equation*}
\begin{split}
K_{11}|_{\gamma=2}=&3\int(\partial_{x}(\frac{n_{e}}{n_{i}})\partial_{x}^{3}N_{e}-\frac{\epsilon H^{2}}{4}\partial_{x}(\frac{1}{n_{e}n_{i}})\partial_{x}^{5}N_{e})\partial_{x}u_{i}\partial_{x}^{2}N_{i}\\
&+3\int(\frac{n_{e}}{n_{i}}\partial_{x}^{4}N_{e}-\frac{\epsilon H^{2}}{4}\frac{1}{n_{e}n_{i}}\partial_{x}^{6}N_{e})\partial_{x}u_{i}\partial_{x}^{2}N_{i}\\
&+3\int(\frac{n_{e}}{n_{i}}\partial_{x}^{3}N_{e}-\frac{\epsilon H^{2}}{4}\frac{1}{n_{e}n_{i}}\partial_{x}^{5}N_{e})\partial_{x}^{2}u_{i}\partial_{x}^{2}N_{i}\\
\leq &C(1+\epsilon^{2}|\!|\!|(N_{e}, U)|\!|\!|_{\epsilon}^{2})|\!|\!|(N_{e}, U)|\!|\!|_{\epsilon}^{2},
\end{split}
\end{equation*}
where we have used H\"older inequality and Lemma \ref{L1}.

In the following we estimate $K_{11}$ for $\gamma=3$, by \eqref{rem-3}, we have
\begin{equation*}
\begin{split}
\partial_{x}^{4}N_{i}=&\partial_{x}^{4}N_{e}-\epsilon\partial_{x}^{4}(n_{e}\partial_{x}^{2}N_{e})
-2\epsilon^{2}\partial_{x}^{4}\big(\partial_{x}\tilde{n_{e}}\partial_{x}N_{e}\big)
-\epsilon^{4}\partial_{x}^{4}\big((\partial_{x}N_{e})^{2}\big)
-\epsilon^{2}\partial_{x}^{4}\big(\partial_{x}^{2}\tilde{n_{e}}N_{e}\big)\\
&-\epsilon^{2}\partial_{x}^{4}R_{3}^{1}
+\frac{H^{2}}{4}\Big[-12\epsilon^{5}\partial_{x}^{4}\big(\frac{(\partial_{x}\tilde{n_{e}})^{3}}{n_{e}^{4}}\partial_{x}N_{e}\big)
+14\epsilon^{4}\partial_{x}^{4}\big(\frac{\partial_{x}\tilde{n_{e}}\partial_{x}^{2}\tilde{n_{e}}}{n_{e}^{3}}\partial_{x}N_{e}\big)\\
&-3\epsilon^{3}\partial_{x}^{4}\big(\frac{\partial_{x}^{3}\tilde{n_{e}}}{n_{e}^{2}}\partial_{x}N_{e}\big)
-18\epsilon^{7}\partial_{x}^{4}\big(\frac{(\partial_{x}\tilde{n_{e}})^{2}}{n_{e}^{4}}(\partial_{x}N_{e})^{2}\big)
+7\epsilon^{4}\partial_{x}^{4}\big(\frac{(\partial_{x}\tilde{n_{e}})^{2}}{n_{e}^{3}}\partial_{x}^{2}N_{e}\big)\\
&+7\epsilon^{6}\partial_{x}^{4}\big(\frac{\partial_{x}^{2}\tilde{n_{e}}}{n_{e}^{3}}(\partial_{x}N_{e})^{2}\big)
-4\epsilon^{3}\partial_{x}^{4}\big(\frac{\partial_{x}^{2}\tilde{n_{e}}}{n_{e}^{2}}\partial_{x}^{2}N_{e}\big)
-12\epsilon^{9}\partial_{x}^{4}\big(\frac{\partial_{x}\tilde{n_{e}}}{n_{e}^{4}}(\partial_{x}N_{e})^{3}\big)\\
&+14\epsilon^{6}\partial_{x}^{4}\big(\frac{\partial_{x}\tilde{n_{e}}}{n_{e}^{3}}\partial_{x}N_{e}\partial_{x}^{2}N_{e}\big)
-3\epsilon^{3}\partial_{x}^{4}\big(\frac{\partial_{x}\tilde{n_{e}}}{n_{e}^{2}}\partial_{x}^{3}N_{e}\big)
-3\epsilon^{11}\partial_{x}^{4}\big(\frac{1}{n_{e}^{4}}(\partial_{x}N_{e})^{4}\big)\\
&+7\epsilon^{8}\partial_{x}^{4}\big(\frac{1}{n_{e}^{3}}(\partial_{x}N_{e})^{2}\partial_{x}^{2}N_{e}\big)
-2\epsilon^{5}\partial_{x}^{4}\big(\frac{1}{n_{e}^{2}}(\partial_{x}^{2}N_{e})^{2}\big)
-3\epsilon^{5}\partial_{x}^{4}\big(\frac{1}{n_{e}^{2}}\partial_{x}N_{e}\partial_{x}^{3}N_{e}\big)\\
&+\epsilon^{2}\partial_{x}^{4}\big(\frac{\partial_{x}^{4}N_{e}}{n_{e}}\big)
+\epsilon^{2}\partial_{x}^{4}\big(\frac{R_{3}^{2}+R_{3}^{3}}{n_{e}^{4}}\big)\Big]\\
=:&\sum_{i=1}^{22}F_{i}\ .
\end{split}
\end{equation*}
Accordingly, $K_{11}|_{\gamma=3}$ is decomposed into
\begin{equation*}
\begin{split}
K_{11}|_{\gamma=3}=\sum_{i=1}^{22}\int\big(\frac{n_{e}}{n_{i}}\partial_{x}^{3}N_{e}
-\frac{\epsilon H^{2}}{4}\frac{1}{n_{e}n_{i}}\partial_{x}^{5}N_{e}\big)(1-u_{i})F_{i}
=:\sum_{i=1}^{22}J_{i}\ .
\end{split}
\end{equation*}

\emph{Estimate of $J_{1}$.} By integration by parts, we have
\begin{equation*}
\begin{split}
J_{1}=-\frac{1}{2}\int(\partial_{x}^{3}N_{e})^{2}\partial_{x}\big(\frac{n_{e}(1-u_{i})}{n_{i}}\big)
+\frac{\epsilon H^{2}}{8}\int(\partial_{x}^{4}N_{e})^{2}\partial_{x}(\frac{1-u_{i}}{n_{e}n_{i}}).
\end{split}
\end{equation*}
By direct computation, we have
\begin{equation}
\begin{split}\label{use 12}
\left|\partial_{x}\frac{1-u_{i}}{n_{e}n_{i}}\right|,\ \  \left|\partial_{x}\frac{n_{e}(1-u_{i})}{n_{i}}\right|\leq C\left(\epsilon+\epsilon^{3}(|\partial_{x}N_{e}| +|\partial_{x}N_{i}|+|\partial_{x}U|)\right).
\end{split}
\end{equation}
Hence by H\"older inequality, Sobolev embedding $H^1\hookrightarrow L^{\infty}$ and Lemma \ref{L1}, we have
\begin{equation*}
\begin{split}
J_{1}\leq &C(1+\epsilon^{2}(\|N_{e}\|_{H^{2}}^{2} +\|N_{i}\|_{H^{2}}^{2}+\|U\|_{H^{2}}^{2})) (\epsilon\|\partial_{x}^{3}N_{e}\|^{2} +\epsilon^{2}\|\partial_{x}^{4}N_{e}\|^{2})\\
\leq &C((1+\epsilon^{2}|\!|\!|(N_{e}, U)|\!|\!|_{\epsilon}^{2})|\!|\!|(N_{e}, U)|\!|\!|_{\epsilon}^{2}).
\end{split}
\end{equation*}

\emph{Estimate of $J_{2}$.} By integration by parts, we have
\begin{equation*}
\begin{split}
J_{2}=&-\epsilon\int\Big[\big(\partial_{x}\frac{n_{e}(1-u_{i})}{n_{i}}\big)\partial_{x}^{3}N_{e}
-\frac{\epsilon H^{2}}{4}\big(\partial_{x}\frac{1-u_{i}}{n_{e}n_{i}}\big)
\partial_{x}^{5}N_{e}\Big]\sum_{\alpha=0}^{3}C_{\alpha}^{3}\partial_{x}^{\alpha}n_{e}\partial_{x}^{5-\alpha}N_{e}\\
&-\epsilon\int\Big[\frac{n_{e}(1-u_{i})}{n_{i}}\partial_{x}^{4}N_{e}
-\frac{\epsilon H^{2}}{4}\frac{1-u_{i}}{n_{e}n_{i}}
\partial_{x}^{6}N_{e}\Big]\sum_{\beta=0}^{3}C_{\beta}^{3}\partial_{x}^{\beta}n_{e}\partial_{x}^{5-\beta}N_{e}\\
=&:J_{21}+J_{22}.
\end{split}
\end{equation*}
Using the equation \eqref{use 12}, and by H\"older inequality and Sobolev embedding theorem,
\begin{equation*}
\begin{split}
J_{21}
\leq C((1+\epsilon^{2}|\!|\!|(N_{e},U)|\!|\!|_{\epsilon}^{2})|\!|\!|(N_{e},U)|\!|\!|_{\epsilon}^{2}).
\end{split}
\end{equation*}
When $\beta=0,1,2$, $J_{22}$ can be easily estimated by $C(1+\epsilon^{2}|\!|\!|(N_{e},U)|\!|\!|_{\epsilon}^{2})|\!|\!|(N_{e},U)|\!|\!|_{\epsilon}^{2}$. When $\beta=3$, by integration by parts, we have
\begin{equation*}
\begin{split}
J_{22}=&\frac{\epsilon}{2}\int\Big[\big(\partial_{x}\frac{n_{e}^{2}(1-u_{i})}{n_{i}}\big)(\partial_{x}^{4}N_{e})^{2}
-\frac{\epsilon H^{2}}{4}\big(\partial_{x}\frac{1-u_{i}}{n_{i}}\big)
(\partial_{x}^{5}N_{e})^{2}\Big].
\end{split}
\end{equation*}
Similar to \eqref{use 12}, we have
\begin{equation}
\begin{split}\label{use131}
\left|\partial_{x}\frac{n_{e}^{2}(1-u_{i})}{n_{i}}\right|\leq & C\left(\epsilon+\epsilon^{3}(|\partial_{x}N_{e}|+|\partial_{x}N_{i}|+|\partial_{x}U|)\right),\\
\left|\partial_{x}\frac{1-u_{i}}{n_{i}}\right|\leq & C\left(\epsilon+\epsilon^{3}(|\partial_{x}N_{i}|+|\partial_{x}U|)\right).
\end{split}
\end{equation}
Therefore, $J_{22}$ can be estimated by $C(1+\epsilon^{2}|\!|\!|(N_{e}, U)|\!|\!|_{\epsilon}^{2})|\!|\!|(N_{e}, U)|\!|\!|_{\epsilon}^{2}$.
As a result, $J_{2}$ can be estimated by $C(1+\epsilon^{2}|\!|\!|(N_{e}, U)|\!|\!|_{\epsilon}^{2})|\!|\!|(N_{e}, U)|\!|\!|_{\epsilon}^{2}$.
$J_{3}\sim J_{5}$ can be also estimated by
$C(1+\epsilon^{2}|\!|\!|(N_{e}, U)|\!|\!|_{\epsilon}^{2})|\!|\!|(N_{e}, U)|\!|\!|_{\epsilon}^{2}$.

\emph{Estimate of $J_{17}$.} Using commutator notation \eqref{w}, we have
\begin{equation*}
\begin{split}
J_{17}=&-\frac{3\epsilon^{11}H^{2}}{4}\int\big(\frac{n_{e}(1-u_{i})}{n_{i}}\partial_{x}^{3}N_{e}-\frac{\epsilon H^{2}}{4}\frac{1-u_{i}}{n_{e}n_{i}}\partial_{x}^{5}N_{e}\big)\\
&\times\left\{[\partial_{x}^{4},\frac{1}{n_{e}^{4}}](\partial_{x}N_{e})^{4}
+\frac{1}{n_{e}^{4}}\partial_{x}^{4}(\partial_{x}N_{e})^{4}\right\}=:J_{171}+J_{172}.
\end{split}
\end{equation*}
By commutator estimates \eqref{w11}, we have
\begin{equation*}
\begin{split}
\left\|\left[\partial_{x}^{4}, \frac{1}{n_{e}^{4}}\right](\partial_{x}N_{e})^{4} \right\|
\leq \left\|\partial_{x}\left(\frac{1}{n_{e}^{4}}\right)\right\|_{L^{\infty}}\|\partial_{x}^{3}(\partial_{x}N_{e})^{4}\|
+\left\|\partial_{x}^4\left(\frac{1}{n_{e}^{4}}\right)\right\|\|\partial_{x}N_{e}\|_{L^{\infty}}^{4}.
\end{split}
\end{equation*}
By direct computation, we have
\begin{equation}
\begin{split}\label{use211}
\left\|\partial_{x}^{4}\left(\frac{1}{n_{e}^{4}}\right)\right\|
\leq &C\Big(1+\epsilon^{3}(\|\partial_{x}N_{e}\| +\|\partial_{x}^{2}N_{e}\|+\|\partial_{x}^{3}N_{e}\| +\|\partial_{x}^{4}N_{e}\|)\\
&+\epsilon^{6}(\|\partial_{x}N_{e}\|_{L^{\infty}}\|\partial_{x}N_{e}\|_{H^2} +\|\partial_{x}^{2}N_{e}\|_{L^{\infty}}\|\partial_{x}^{2}N_{e}\|)\\
&+\epsilon^{9}(\|\partial_{x}N_{e}\|_{L^{\infty}}^{2}\|\partial_{x}^{2}N_{e}\|
+\|\partial_{x}N_{e}\|_{L^{\infty}}^{2}\|\partial_{x}N_{e}\|)\\
&+\epsilon^{12}\|\partial_{x}N_{e}\|\|\partial_{x}N_{e}\|_{L^{\infty}}^{3}\Big),
\end{split}
\end{equation}
and
\begin{equation}
\begin{split}\label{use 17}
\|\partial_{x}^{3}(\partial_{x}N_{e})^{4}\|
\leq &C(\|\partial_{x}N_{e}\|_{L^{\infty}}^{2}\|\partial_{x}^{2}N_{e}\|_{L^{\infty}}\|\partial_{x}^{3}N_{e}\|\\
&+\|\partial_{x}N_{e}\|_{L^{\infty}}\|\partial_{x}^{2}N_{e}\|_{L^{\infty}}^{2}\|\partial_{x}^{2}N_{e}\|
+\|\partial_{x}N_{e}\|_{L^{\infty}}^{3}\|\partial_{x}^{4}N_{e}\|).
\end{split}
\end{equation}
Therefore, using \eqref{use 1}, \eqref{use211} and \eqref{use 17}, by H\"older inequality and Sobolev embedding theorem, we obtain
\begin{equation}
\begin{split}\label{s5}
J_{171}
\leq C_{1}(1+\epsilon^{2}|\!|\!|(N_{e},U)|\!|\!|_{\epsilon}^{8})|\!|\!|(N_{e},U)|\!|\!|_{\epsilon}^{2}.
\end{split}
\end{equation}
On the other hand, by direct computation, we have
\begin{equation}
\begin{split}\label{use 18}
\|\partial_{x}^{4}(\partial_{x}N_{e})^{4}\|
\leq &C\big(\|\partial_{x}^{2}N_{e}\|_{L^{\infty}}^{3}\|\partial_{x}^{2}N_{e}\|
+\|\partial_{x}N_{e}\|_{L^{\infty}}\|\partial_{x}^{2}N_{e}\|_{L^{\infty}}^{2}\|\partial_{x}^{3}N_{e}\|\\
&+\|\partial_{x}N_{e}\|\|\partial_{x}^{3}N_{e}\|_{L^{\infty}}^{2}
+\|\partial_{x}N_{e}\|_{L^{\infty}}^{2}\|\partial_{x}^{2}N_{e}\|_{L^{\infty}}\|\partial_{x}^{4}N_{e}\|\\
&+\|\partial_{x}N_{e}\|_{L^{\infty}}^{3}\|\partial_{x}^{5}N_{e}\|\big).
\end{split}
\end{equation}
Therefore, by applying H\"older inequality again, we have
\begin{equation}
\begin{split}\label{s6}
J_{172}
\leq C_{1}(1+\epsilon^{2}|\!|\!|(N_{e},U)|\!|\!|_{\epsilon}^{4})|\!|\!|(N_{e},U)|\!|\!|_{\epsilon}^{2}.
\end{split}
\end{equation}
Adding the estimates \eqref{s5} and \eqref{s6}, we have
\begin{equation*}
\begin{split}
J_{17}\leq C_{1}(1+\epsilon^{2}|\!|\!|(N_{e},U)|\!|\!|_{\epsilon}^{8})|\!|\!|(N_{e},U)|\!|\!|_{\epsilon}^{2}.
\end{split}
\end{equation*}
$J_{7}\sim J_{10},\ J_{12},\ J_{14}$ are similar to $J_{17}$.

\emph{Estimate of $J_{19}$.} Using commutator notation \eqref{w}, we have
\begin{equation*}
\begin{split}
J_{19}=&\frac{1}{2}\epsilon^{5}H^{2}\int\left(\frac{n_{e}(1-u_{i})}{n_{i}}\partial_{x}^{3}N_{e}-\frac{\epsilon H^{2}}{4}\frac{1-u_{i}}{n_{e}n_{i}}\partial_{x}^{5}N_{e}\right)\\
&\times\left\{[\partial_{x}^{4},\frac{1}{n_{e}^{2}}](\partial_{x}^{2}N_{e})^{2}
+\frac{1}{n_{e}^{2}}\partial_{x}^{4}\big((\partial_{x}^{2}N_{e})^{2}\big)\right\} =:J_{191}+J_{192}.
\end{split}
\end{equation*}
By commutator estimates \eqref{w11}, we have
\begin{equation*}
\begin{split}
\|[\partial_{x}^{4},\frac{1}{n_{e}^{2}}](\partial_{x}^{2}N_{e})^{2}\|
\leq \|\partial_{x}\frac{1}{n_{e}^{2}}\|_{L^{\infty}}\|\partial_{x}^{3}(\partial_{x}^{2}N_{e})^{2}\|
+\|\partial_{x}^{4}\frac{1}{n_{e}^{2}}\|\|\partial_{x}^{2}N_{e}\|_{L^{\infty}}^{2}.
\end{split}
\end{equation*}
By direct computation, we have
\begin{equation}
\begin{split}\label{use222}
\|\partial_{x}^{3}(\partial_{x}^{2}N_{e})^{2}\|
\leq C(\|\partial_{x}^{3}N_{e}\|_{L^{\infty}}\|\partial_{x}^{4}N_{e}\|
+\|\partial_{x}^{2}N_{e}\|_{L^{\infty}}\|\partial_{x}^{5}N_{e}\|).
\end{split}
\end{equation}
Therefore, using \eqref{use 1}, \eqref{use211} and \eqref{use222}, by H\"older inequality and Sobolev embedding theorem, we can obtain
\begin{equation}
\begin{split}\label{s7}
J_{191}
\leq C_{1}(1+\epsilon^{2}|\!|\!|(N_{e},U)|\!|\!|_{\epsilon}^{6})|\!|\!|(N_{e},U)|\!|\!|_{\epsilon}^{2}.
\end{split}
\end{equation}
On the other hand, by direct computation, we have
\begin{equation}
\begin{split}\label{use 22}
\|\partial_{x}^{4}(\partial_{x}^{2}N_{e})^{2}\|
\leq C(\|\partial_{x}^{4}N_{e}\|_{L^{\infty}}\|\partial_{x}^{4}N_{e}\|
+\|\partial_{x}^{3}N_{e}\|_{L^{\infty}}\|\partial_{x}^{5}N_{e}\|+\|\partial_{x}^{2}N_{e}\|_{L^{\infty}}\|\partial_{x}^{6}N_{e}\|).
\end{split}
\end{equation}
Therefore, by applying H\"older inequality again, we have
\begin{equation}
\begin{split}\label{s8}
J_{192}
\leq C_{1}(1+\epsilon^{2}|\!|\!|(N_{e},U)|\!|\!|_{\epsilon}^{2})|\!|\!|(N_{e},U)|\!|\!|_{\epsilon}^{2}.
\end{split}
\end{equation}
Adding the estimates \eqref{s7} and \eqref{s8}, we have
\begin{equation*}
\begin{split}
J_{19}\leq C_{1}(1+\epsilon^{2}|\!|\!|(N_{e},U)|\!|\!|_{\epsilon}^{6})|\!|\!|(N_{e},U)|\!|\!|_{\epsilon}^{2}.
\end{split}
\end{equation*}
$J_{11},\ J_{13},\ J_{15},\ J_{18}$ are similar to $J_{19}$.

\emph{Estimate of $J_{21}$.}
\begin{equation*}
\begin{split}
J_{21}=\frac{\epsilon^{2} H^{2}}{4}\int(\frac{n_{e}}{n_{i}}\partial_{x}^{3}N_{e}
-\frac{\epsilon H^{2}}{4}\frac{1}{n_{e}n_{i}}\partial_{x}^{5}N_{e})(1-u_{i})\partial_{x}^{4}(\frac{\partial_{x}^{4}N_{e}}{n_{e}})
=:J_{211}+J_{212}\ .
\end{split}
\end{equation*}
By integration of parts twice and commutator notation \eqref{w}, we have

\begin{equation*}
\begin{split}
J_{211}=&\frac{\epsilon^{2} H^{2}}{4}\int\left(\frac{n_{e}(1-u_{i})}{n_{i}}\partial_{x}^{5}N_{e}
+2\partial_{x}(\frac{n_{e}(1-u_{i})}{n_{i}})\partial_{x}^{4}N_{e}
+\partial_{x}^{2}(\frac{n_{e}(1-u_{i})}{n_{i}})\partial_{x}^{3}N_{e}\right)\\
&\times\big([\partial_{x}^{2},\frac{1}{n_{e}}]\partial_{x}^{4}N_{e}+\frac{1}{n_{e}}\partial_{x}^{6}N_{e}\big).
\end{split}
\end{equation*}
By commutator estimates \eqref{w11} of Lemma \ref{L9}, we have
\begin{equation*}
\begin{split}
\|[\partial_{x}^{2},\frac{1}{n_{e}}]\partial_{x}^{4}N_{e}\|
\leq \|\partial_{x}\frac{1}{n_{e}}\|_{L^{\infty}}\|\partial_{x}^{5}N_{e}\|
+\|\partial_{x}^{2}\frac{1}{n_{e}}\|\|\partial_{x}^{4}N_{e}\|_{L^{\infty}}.
\end{split}
\end{equation*}
By direct computation, we have
\begin{equation}
\begin{split}\label{use223}
\|\partial_{x}^{2}(\frac{n_{e}(1-u_{i})}{n_{i}})\|
\leq &C\big(\epsilon+\epsilon^{3}(\|\partial_{x}N_{e}\|+\|\partial_{x}N_{i}\|+\|\partial_{x}U\|)\\
&+\epsilon^{6}(\|\partial_{x}N_{e}\|_{L^{\infty}}\|\partial_{x}N_{i}\|+\|\partial_{x}N_{i}\|_{L^{\infty}}\|\partial_{x}U\|
+\|\partial_{x}N_{e}\|_{L^{\infty}}\|\partial_{x}U\|\\
&+\|\partial_{x}N_{e}\|_{L^{\infty}}\|\partial_{x}N_{e}\|+\|\partial_{x}N_{i}\|\|\partial_{x}N_{i}\|_{L^{\infty}}
+\|\partial_{x}U\|_{L^{\infty}}\|\partial_{x}U\|)\big).
\end{split}
\end{equation}
Therefore, by applying H\"older inequality, Sobolev embedding theorem and Lemma \ref{L1},
\begin{equation*}
\begin{split}
J_{211}
\leq C_{1}(1+\epsilon^{2}|\!|\!|(N_{e},U)|\!|\!|_{\epsilon}^{4})|\!|\!|(N_{e},U)|\!|\!|_{\epsilon}^{2}.
\end{split}
\end{equation*}
By integration by parts and commutator notation \eqref{w}, we have
\begin{equation*}
\begin{split}
J_{212}
&=\frac{\epsilon^{3} H^{4}}{16}\int(\frac{1-u_{i}}{n_{e}n_{i}}
\partial_{x}^{6}N_{e}+\partial_{x}(\frac{1-u_{i}}{n_{e}n_{i}})\partial_{x}^{5}N_{e})
([\partial_{x}^{3},\frac{1}{n_{e}}]\partial_{x}^{4}N_{e}+\frac{1}{n_{e}}\partial_{x}^{7}N_{e})\\
&=:J_{2121}+J_{2122}.
\end{split}
\end{equation*}
Similar to $J_{211}$, using \eqref{use 1}\ and\ \eqref{use 21}, Sobolev embedding, Cauchy inequality and Lemma \ref{L1}, we have
\begin{equation}
\begin{split}\label{s9}
J_{2121}
\leq C_{1}(1+\epsilon^{2}|\!|\!|(N_{e},U)|\!|\!|_{\epsilon}^{4})|\!|\!|(N_{e},U)|\!|\!|_{\epsilon}^{2}.
\end{split}
\end{equation}
By integration by parts, we have
\begin{equation*}
\begin{split}
J_{2122}=&-\frac{\epsilon^{3} H^{4}}{16}\int\left(\frac{1}{2}\partial_{x}(\frac{1-u_{i}}{n_{e}^{2}n_{i}})
+\frac{1}{n_{e}}\partial_{x}(\frac{1}{n_{e}n_{i}})\right)(\partial_{x}^{6}N_{e})^{2}\\
&+\int\partial_{x}\left(\frac{1}{n_{e}}\partial_{x}(\frac{1-u_{i}}{n_{e}n_{i}})\right)\partial_{x}^{5}N_{e}\partial_{x}^{6}N_{e}.
\end{split}
\end{equation*}
Note that
\begin{equation}
\begin{split}\label{use224}
\|\partial_{x}\left(\frac{1}{n_{e}}\partial_{x}(\frac{1-u_{i}}{n_{e}n_{i}})\right)\|_{L^{\infty}}
\leq &C(\epsilon+\epsilon^{3}(\|\partial_{x}N_{e}\|_{L^{\infty}}+\|\partial_{x}N_{i}\|_{L^{\infty}}+\|\partial_{x}U\|_{L^{\infty}})\\
&+\epsilon^{6}(\|\partial_{x}N_{e}\|_{L^{\infty}}^{2}+\|\partial_{x}N_{i}\|_{L^{\infty}}^{2}+\|\partial_{x}U\|_{L^{\infty}}^{2}\\
&+\|\partial_{x}N_{e}\partial_{x}N_{i}\|_{L^{\infty}}
+\|\partial_{x}N_{e}\partial_{x}U\|_{L^{\infty}}+\|\partial_{x}N_{i}\partial_{x}U\|_{L^{\infty}})).
\end{split}
\end{equation}
Thus, by \eqref{use 11}, \eqref{use131}, H\"older inequality, Sobolev embedding theorem and Lemma \ref{L1},
\begin{equation}
\begin{split}\label{s10}
J_{2122}
\leq C(1+\epsilon^{2}|\!|\!|(N_{e},U)|\!|\!|_{\epsilon}^{2})|\!|\!|(N_{e},U)|\!|\!|_{\epsilon}^{2}.
\end{split}
\end{equation}
Adding the estimates \eqref{s9} and \eqref{s10}, we have
\begin{equation*}
\begin{split}
J_{21}\leq C_{1}(1+\epsilon^{2}|\!|\!|(N_{e},U)|\!|\!|_{\epsilon}^{4})|\!|\!|(N_{e},U)|\!|\!|_{\epsilon}^{2}.
\end{split}
\end{equation*}
$J_{16}$ and $J_{20}$ are similar to $J_{21}$ and can be bounded by $C_{1}(1+\epsilon^{2}|\!|\!|(N_{e},U)|\!|\!|_{\epsilon}^{4})|\!|\!|(N_{e},U)|\!|\!|_{\epsilon}^{2}$¡£

\emph{Estimate of $J_{22}$.} By using \eqref{equ28}\ and \ \eqref{equ29} in Lemma \ref{L8}, similarly we have
\begin{equation*}
\begin{split}
J_{22}
\leq C_{1}(1+\epsilon^{2}|\!|\!|(N_{e},U)|\!|\!|_{\epsilon}^{4})|\!|\!|(N_{e},U)|\!|\!|_{\epsilon}^{2}.
\end{split}
\end{equation*}
The proof of Lemma \eqref{Lem-u1} is then complete.
\end{proof}

\subsection{Estimate of $K_{12}$.}\label{2.5}Next, we estimate $K_{12}$ in \eqref{biaohao}.
\begin{lemma}[\textbf{\emph{Estimate of $K_{12}$}}]\label{Lem-u2} Let $(N_{i},N_{e},U)$ be a solution to \eqref{rem}, then there holds
\begin{equation*}
\begin{split}
K_{12}\leq &-\frac\epsilon2\frac{d}{dt}\left[\int\frac{n_{e}}{n_{i}}(\partial_{x}^{3}N_{e})^{2}
+\epsilon\int\frac{n_{e}^{2}}{n_{i}}(\partial_{x}^{4}N_{e})^{2}
+\frac{\epsilon^{2}H^{2}}{4}\int\frac{1}{n_{i}}(\partial_{x}^{5}N_{e})^{2}\right]\\
&-\frac\epsilon2\frac{\epsilon H^{2}}{4}\frac{d}{dt}\left[\int\frac{1}{n_{e}n_{i}}(\partial_{x}^{4}N_{e})^{2}
+\epsilon\int\frac{1}{n_{i}}(\partial_{x}^{5}N_{e})^{2}
+\frac{\epsilon^{2}H^{2}}{4}\int\frac{1}{(n_{e})^{2}n_{i}}(\partial_{x}^{6}N_{e})^{2}\right]\\
&+C_1(1+\epsilon^2|\!|\!|(N_{e},U)|\!|\!|^{6}_{\epsilon})(1+|\!|\!|(N_{e},U)|\!|\!|^2_{\epsilon}),
\end{split}
\end{equation*}
\end{lemma}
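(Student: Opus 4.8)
The strategy follows the pattern of the proof of Proposition~\ref{P3} and Lemma~\ref{Lem-u1}: $K_{12}$ is the piece of $K_1$ coming from the term $E_2=-\tfrac{\epsilon}{n_i}\partial_t\partial_x^3N_i$ in the expansion of $\partial_x^4U$, so that
\[
K_{12}=-\epsilon\int\frac1{n_i}\Big(n_e\partial_x^3N_e-\frac{\epsilon H^2}{4}\frac1{n_e}\partial_x^5N_e\Big)\partial_t\partial_x^3N_i ,
\]
and the point is that, once $\partial_x^3N_i$ is eliminated in favour of $N_e$ and one integrates by parts in $x$, the leading part of $K_{12}$ is exactly minus the time derivative of the weighted energy that is placed on the left of Proposition~\ref{P3}. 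First I would apply $\partial_x^3$ to the constraint \eqref{rem-3} and solve for $\partial_x^3N_i$; this is the three-derivative analogue of the expansion $\partial_x^4N_i=\sum_{i=1}^{22}F_i$ used in Lemma~\ref{Lem-u1}, with leading part
\[
\partial_x^3N_i=\partial_x^3N_e-\epsilon\,\partial_x^3(n_e\partial_x^2N_e)+\frac{\epsilon^2H^2}{4}\partial_x^3\Big(\frac{\partial_x^4N_e}{n_e}\Big)+\epsilon^2\mathcal E ,
\]
where $\mathcal E$ gathers all the remaining terms (each of which carries an extra power of $\epsilon$ and/or is nonlinear), whose $H^\alpha$-norms are controlled by $C_1\big(1+|\!|\!|(N_e,U)|\!|\!|_\epsilon^{k}\big)$ via Lemma~\ref{L1}, Lemma~\ref{L8}, the algebra property of $H^1$, and the a priori bound \eqref{priori}.

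Differentiating the last identity in $t$ and inserting it into $K_{12}$, I would isolate the six ``leading $\times$ leading'' products, obtained by pairing the two terms of $n_e\partial_x^3N_e-\tfrac{\epsilon H^2}{4}\tfrac1{n_e}\partial_x^5N_e$ with $\partial_t\partial_x^3N_e$, $-\epsilon n_e\partial_t\partial_x^5N_e$ and $\tfrac{\epsilon^2H^2}{4}\tfrac1{n_e}\partial_t\partial_x^7N_e$. Each of these six has the form $c\,\epsilon^{j}\!\int f\,\partial_x^aN_e\,\partial_t\partial_x^bN_e$, and after integrating by parts in $x$ the right number of times so that both spatial orders become a common $m$, it equals $\tfrac{c}{2}\epsilon^{j'}\!\int f\,\partial_t(\partial_x^mN_e)^2=\tfrac{c}{2}\epsilon^{j'}\tfrac{d}{dt}\!\int f(\partial_x^mN_e)^2-\tfrac{c}{2}\epsilon^{j'}\!\int(\partial_tf)(\partial_x^mN_e)^2$ plus Leibniz/commutator remainders. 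Collecting the six exact time derivatives then reproduces precisely
\[
\begin{aligned}
&-\frac{\epsilon}{2}\frac{d}{dt}\Big[\int\frac{n_e}{n_i}(\partial_x^3N_e)^2+\epsilon\int\frac{n_e^2}{n_i}(\partial_x^4N_e)^2+\frac{\epsilon^2H^2}{4}\int\frac1{n_i}(\partial_x^5N_e)^2\Big]\\
&\qquad-\frac{\epsilon}{2}\frac{\epsilon H^2}{4}\frac{d}{dt}\Big[\int\frac1{n_en_i}(\partial_x^4N_e)^2+\epsilon\int\frac1{n_i}(\partial_x^5N_e)^2+\frac{\epsilon^2H^2}{4}\int\frac1{n_e^2n_i}(\partial_x^6N_e)^2\Big],
\end{aligned}
\]
the weight $\tfrac1{n_i}(\partial_x^5N_e)^2$ collecting contributions from both the $\partial_x^3N_e\cdot\partial_t\partial_x^7N_e$ and the $\partial_x^5N_e\cdot\partial_t\partial_x^5N_e$ products (and the highest mixed derivative $\partial_t\partial_x^6N_e$ surviving only inside the total derivative $\tfrac{d}{dt}\!\int\tfrac1{n_e^2n_i}(\partial_x^6N_e)^2$).

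It then remains to absorb everything else into $C_1(1+\epsilon^2|\!|\!|(N_e,U)|\!|\!|_\epsilon^6)(1+|\!|\!|(N_e,U)|\!|\!|_\epsilon^2)$: the ``$\partial_tf$'' remainders (using $|\partial_t(n_e/n_i)|\le C(\epsilon+\epsilon^3(|\partial_tN_e|+|\partial_tN_i|))$ and its relatives together with \eqref{priori}), the sub-leading parts of the six products (where an $x$-derivative falls on $n_e,n_i,u_i$ instead of on $N_e$), the commutator terms, and the whole $\epsilon^2\mathcal E$ contribution; note that for these one does \emph{not} extract any time derivative, one integrates by parts in $x$ just enough to bring every mixed derivative down to order $\le5$ and then bounds directly. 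This is done exactly as in Lemma~\ref{Lem-u1}, by H\"older's and Cauchy's inequalities, the commutator estimate (Lemma~\ref{L9}), the embedding $H^1\hookrightarrow L^\infty$, and the trade-off lemmas: Lemma~\ref{L1} turns $\|\partial_x^\alpha N_i\|$ ($\alpha\le2$) into $|\!|\!|(N_e,U)|\!|\!|_\epsilon$, Lemma~\ref{L2} bounds $\|\epsilon\partial_tN_i\|_{H^1}$, and Lemma~\ref{L3} (fed by Lemma~\ref{L2}) bounds the mixed derivatives $\partial_t\partial_x^kN_e$ for $k\le5$. The main obstacle, and the only delicate point, is the bookkeeping of powers of $\epsilon$: each factor $\partial_t\partial_x^kN_e$ is controlled in $L^2$ only at the price of a negative power of $\epsilon$, so one must verify term by term that the $\epsilon^{j}$ in front — together with the choice of $\alpha$ in Lemma~\ref{L3} giving the strongest available bound on the particular mixed derivative occurring — is exactly enough; there is no slack, but the count does close. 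Summing all the estimates yields the Lemma.
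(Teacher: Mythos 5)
Your proposal is correct and follows essentially the same route as the paper: you substitute $\partial_t\partial_x^3N_i$ from the constraint \eqref{rem-3}, extract the six exact time derivatives from the pairings of $\frac{n_e}{n_i}\partial_x^3N_e-\frac{\epsilon H^2}{4}\frac{1}{n_en_i}\partial_x^5N_e$ with the three leading terms $\partial_t\partial_x^3N_e$, $-\epsilon\partial_t\partial_x^3(n_e\partial_x^2N_e)$, $\frac{\epsilon^2H^2}{4}\partial_t\partial_x^3(\partial_x^4N_e/n_e)$, and absorb the $\partial_t$-of-weight and remainder terms via Lemmas \ref{L1}--\ref{L3}, \ref{L8}, \ref{L9}, Sobolev embedding and \eqref{priori} — exactly the paper's treatment of $I_1,I_2,I_3$ and $I_4\sim I_{22}$. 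The only difference is presentational (you bundle the non-leading terms into one remainder $\mathcal E$ rather than listing all $D_i$), and your $\epsilon$-bookkeeping observation about the mixed derivatives $\partial_t\partial_x^kN_e$, $k\le5$, matches how the paper closes those estimates.
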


\begin{proof}[Proof.]Recall that in \eqref{biaohao}
\begin{equation*}
\begin{split}
K_{12}=-\epsilon\int\left(\frac{n_{e}}{n_{i}}\partial_{x}^{3}N_{e}
-\frac{\epsilon H^{2}}{4}\frac{1}{n_{e}n_{i}}\partial_{x}^{5}N_{e}\right)\partial_{t}\partial_{x}^{3}N_{i}.
\end{split}
\end{equation*}
By the $\eqref{rem-3}$, we have
\begin{equation*}
\begin{split}
\partial_{t}&\partial_{x}^{3}N_{i}=\partial_{t}\partial_{x}^{3}N_{e}-\epsilon\partial_{t}\partial_{x}^{3}\big(n_{e}\partial_{x}^{2}N_{i}\big)
+\frac{\epsilon^{2}H^{2}}{4}\partial_{t}\partial_{x}^{3}\big(\frac{\partial_{x}^{4}N_{e}}{n_{e}}\big)\\
&-2\epsilon^{2}\partial_{t}\partial_{x}^{3}\big(\partial_{x}\tilde{n_{e}}\partial_{x}N_{e}\big)
-\epsilon^{4}\partial_{t}\partial_{x}^{3}\big((\partial_{x}N_{e})^{2}\big)
-\epsilon^{2}\partial_{t}\partial_{x}^{3}\big(\partial_{x}^{2}\tilde{n_{e}}N_{e}\big)\\
&-\epsilon^{2}\partial_{t}\partial_{x}^{3}R_{(3)}^{1}
+\frac{H^{2}}{4}\Big\{-12\epsilon^{5}\partial_{t}\partial_{x}^{3}\big(\frac{(\partial_{x}\tilde{n_{e}})^{3}}{n_{e}^{4}}\partial_{x}N_{e}\big)
+14\epsilon^{4}\partial_{t}\partial_{x}^{3}\big(\frac{\partial_{x}\tilde{n_{e}}\partial_{x}^{2}\tilde{n_{e}}}{n_{e}^{3}}\partial_{x}N_{e}\big)\\
&-3\epsilon^{3}\partial_{t}\partial_{x}^{3}\big(\frac{\partial_{x}^{3}\tilde{n_{e}}}{n_{e}^{2}}\partial_{x}N_{e}\big)
-18\epsilon^{7}\partial_{t}\partial_{x}^{3}\big(\frac{(\partial_{x}\tilde{n_{e}})^{2}}{n_{e}^{4}}(\partial_{x}N_{e})^{2}\big)
+7\epsilon^{4}\partial_{t}\partial_{x}^{3}\big(\frac{(\partial_{x}\tilde{n_{e}})^{2}}{n_{e}^{3}}\partial_{x}^{2}N_{e}\big)\\
&+7\epsilon^{6}\partial_{t}\partial_{x}^{3}\big(\frac{\partial_{x}^{2}\tilde{n_{e}}}{n_{e}^{3}}(\partial_{x}N_{e})^{2}\big)
-4\epsilon^{3}\partial_{t}\partial_{x}^{3}\big(\frac{\partial_{x}^{2}\tilde{n_{e}}}{n_{e}^{2}}\partial_{x}^{2}N_{e}\big)
-12\epsilon^{9}\partial_{t}\partial_{x}^{3}\big(\frac{\partial_{x}\tilde{n_{e}}}{n_{e}^{4}}(\partial_{x}N_{e})^{3}\big)\\
&+14\epsilon^{6}\partial_{t}\partial_{x}^{3}\big(\frac{\partial_{x}\tilde{n_{e}}}{n_{e}^{2}}\partial_{x}N_{e}\partial_{x}^{2}N_{e}\big)
-3\epsilon^{3}\partial_{t}\partial_{x}^{3}\big(\frac{\partial_{x}\tilde{n_{e}}}{n_{e}^{2}}\partial_{x}^{3}N_{e}\big)
-3\epsilon^{11}\partial_{t}\partial_{x}^{3}\big(\frac{1}{n_{e}^{4}}(\partial_{x}N_{e})^{4}\big)\\
&+7\epsilon^{8}\partial_{t}\partial_{x}^{3}\big(\frac{1}{n_{e}^{3}}(\partial_{x}N_{e})^{2}\partial_{x}^{2}N_{e}\big)
-2\epsilon^{5}\partial_{t}\partial_{x}^{3}\big(\frac{1}{n_{e}^{2}}(\partial_{x}^{2}N_{e})^{2}\big)
-3\epsilon^{5}\partial_{t}\partial_{x}^{3}\big(\frac{1}{n_{e}^{2}}\partial_{x}N_{e}\partial_{x}^{3}N_{e}\big)\\
&+\epsilon^{2}\partial_{t}\partial_{x}^{3}\big(\frac{R_{(3)}^{2}+R_{(3)}^{3}}{n_{e}^{4}}\big)\Big\}\\
=:&\sum_{i=1}^{22}D_{i}\ .
\end{split}
\end{equation*}
Accordingly, $K_{12}$ is decomposed into
\begin{equation*}
\begin{split}
K_{12}=-\sum_{i=1}^{22}\epsilon\int(\frac{n_{e}}{n_{i}}\partial_{x}^{3}N_{e}
-\frac{\epsilon H^{2}}{4}\frac{1}{n_{e}n_{i}}\partial_{x}^{5}N_{e})D_{i}
=:\sum_{i=1}^{22}I_{i}\ .
\end{split}
\end{equation*}

\emph{Estimate of $I_{1}$.}
\begin{equation*}
\begin{split}
I_{1}=-\epsilon\int(\frac{n_{e}}{n_{i}}\partial_{x}^{3}N_{e}
-\frac{\epsilon H^{2}}{4}\frac{1}{n_{e}n_{i}}\partial_{x}^{5}N_{e})\partial_{t}\partial_{x}^{3}N_{e}=:I_{11}+I_{12}.
\end{split}
\end{equation*}
By integration by parts, we have
\begin{equation*}
\begin{split}
I_{11}&=-\epsilon\int\frac{n_{e}}{n_{i}}\partial_{x}^{3}N_{e}\partial_{t}\partial_{x}^{3}N_{e}\\
&=-\frac{\epsilon}{2}\frac{d}{dt}\int\frac{n_{e}}{n_{i}}(\partial_{x}^{3}N_{e})^{2}
+\frac{\epsilon}{2}\int(\partial_{t}\frac{n_{e}}{n_{i}})(\partial_{x}^{3}N_{e})^{2}.
\end{split}
\end{equation*}
By direct computation, we have
\begin{equation}
\begin{split}\label{use 23}
\|\partial_{t}\frac{n_{e}}{n_{i}}\|_{L^{\infty}}
\leq C\big(\epsilon+\epsilon^{3}(\|\partial_{t}N_{e}\|_{L^{\infty}}+\|\partial_{t}N_{i}\|_{L^{\infty}})\big).
\end{split}
\end{equation}
Thus by Sobolev embedding $H^1\hookrightarrow L^{\infty}$ and Lemma \ref{L2}-\ref{L3}, we have
\begin{equation*}
\begin{split}
\frac{\epsilon}{2}\int(\partial_{t}\frac{n_{e}}{n_{i}})(\partial_{x}^{3}N_{e})^{2}
&\leq C\|\partial_{t}\frac{n_{e}}{n_{i}}\|_{L^{\infty}}(\epsilon\|\partial_{x}^{3}N_{e}\|^{2})\\
&\leq C_{1}(1+\epsilon^{2}(\|\epsilon\partial_{tx}N_{e}\|^{2}+\|\epsilon\partial_{tx}N_{i}\|^{2}))(\epsilon\|\partial_{x}^{3}N_{e}\|^{2})\\
&\leq C_{1}(1+\epsilon^{2}|\!|\!|(N_{e},U)|\!|\!|_{\epsilon}^{2})|\!|\!|(N_{e},U)|\!|\!|_{\epsilon}^{2}.
\end{split}
\end{equation*}
Applying integration by parts again twice, we have
\begin{equation*}
\begin{split}
I_{12}
=&-\frac{H^{2}}{4}\frac{\epsilon^{2}}{2}\frac{d}{dt}\int\frac{1}{n_{e}n_{i}}(\partial_{x}^{4}N_{e})^{2}
+\frac{H^{2}}{4}\frac{\epsilon^{2}}{2}\int\partial_{t}(\frac{1}{n_{e}n_{i}})(\partial_{x}^{4}N_{e})^{2}\\
&-\frac{\epsilon^{2} H^{2}}{4}\int\partial_{x}(\frac{1}{n_{e}n_{i}})\partial_{x}^{4}N_{e}\partial_{t}\partial_{x}^{3}N_{e}\\
=&:I_{121}+I_{122}+I_{123}\ .
\end{split}
\end{equation*}
Note that the estimate of $\|\partial_{t}(\frac{1}{n_{e}n_{i}})\|_{L^{\infty}}$ is similar to that for \eqref{use 23}, thus similarly $I_{122}$ can be estimated by $C_{1}(1+\epsilon^{2}|\!|\!|(N_{e},U)|\!|\!|_{\epsilon}^{2})|\!|\!|(N_{e},U)|\!|\!|_{\epsilon}^{2}$. By \eqref{use 11}, Sobolev embedding theorem and Cauchy inequality, we have
\begin{equation*}
\begin{split}
I_{123}&\leq C\big(1+\epsilon^{2}(\|\partial_{x}N_{e}\|_{L^{\infty}}^{2}
+\|\partial_{x}N_{i}\|_{L^{\infty}}^{2})\big)(\epsilon^{2}\|\partial_{x}^{4}N_{e}\|^{2}
+\epsilon^{2}\|\epsilon\partial_{t}\partial_{x}^{3}N_{e}\|^{2})\\
&\leq C\big(1+\epsilon^{2}(\|N_{e}\|_{H^{2}}^{2}+\|N_{i}\|_{H^{2}}^{2})\big)
(\epsilon^{2}\|\partial_{x}^{4}N_{e}\|^{2}+\epsilon^{2}\|\epsilon\partial_{t}\partial_{x}^{3}N_{e}\|^{2})\\
&\leq C_{1}(1+\epsilon^{2}|\!|\!|(N_{e},U)|\!|\!|_{\epsilon}^{2})|\!|\!|(N_{e},U)|\!|\!|_{\epsilon}^{2},
\end{split}
\end{equation*}
where we have used Lemma \ref{L1}-\ref{L3}.
Therefore, we obtain
\begin{equation}
\begin{split}\label{w1}
I_{1}\leq&-\frac{\epsilon}{2}\frac{d}{dt}\int\frac{n_{e}}{n_{i}}(\partial_{x}^{3}N_{e})^{2}
-\frac{H^{2}}{4}\frac{\epsilon^{2}}{2}\frac{d}{dt}\int\frac{1}{n_{i}n_{e}}(\partial_{x}^{4}N_{e})^{2}\\
&+C_{1}(1+\epsilon^{2}|\!|\!|(N_{e},U)|\!|\!|_{\epsilon}^{2})|\!|\!|(N_{e},U)|\!|\!|_{\epsilon}^{2}.
\end{split}
\end{equation}

\emph{Estimate of $I_{2}$.} Recall that
\begin{equation*}
\begin{split}
I_{2}=\epsilon^{2}\int(\frac{n_{e}}{n_{i}}\partial_{x}^{3}N_{e}
-\frac{\epsilon H^{2}}{4}\frac{1}{n_{e}n_{i}}\partial_{x}^{5}N_{e})\partial_{t}\partial_{x}^{3}(n_{e}\partial_{x}^{2}N_{e})
=:I_{21}+I_{22}\ .
\end{split}
\end{equation*}

\emph{Estimate of $I_{21}$.} By integration by parts, we have
\begin{equation*}
\begin{split}
I_{21}&=\epsilon^{2}\int\frac{n_{e}}{n_{i}}\partial_{x}^{3}N_{e}\partial_{t}\partial_{x}^{3}(n_{e}\partial_{x}^{2}N_{e})\\
&=-\epsilon^{2}\int\frac{n_{e}}{n_{i}}\partial_{x}^{4}N_{e}\partial_{t}\partial_{x}^{2}(n_{e}\partial_{x}^{2}N_{e})
-\epsilon^{2}\int(\partial_{x}\frac{n_{e}}{n_{i}})\partial_{x}^{3}N_{e}\partial_{t}\partial_{x}^{2}(n_{e}\partial_{x}^{2}N_{e})\\
&=:I_{211}+I_{212}.
\end{split}
\end{equation*}

\emph{Estimate of $I_{211}$.} By direct computation, we have
\begin{equation*}
\begin{split}
I_{211}=&-\epsilon^{2}\int\frac{n_{e}^{2}}{n_{i}}\partial_{x}^{4}N_{e}\partial_{t}\partial_{x}^{4}N_{e}
-\epsilon^{2}\int\frac{n_{e}}{n_{i}}(\partial_{x}^{4}N_{e})^{2}\partial_{t}n_{e}\\
&-\epsilon^{2}\int\frac{n_{e}}{n_{i}}\partial_{x}^{4}N_{e}\partial_{t}
\big(2\partial_{x}n_{e}\partial_{x}^{3}N_{e}+\partial_{x}^{2}n_{e}\partial_{x}^{2}N_{e}\big)\\
=&:I_{2111}+I_{2112}+I_{2113}.
\end{split}
\end{equation*}
Note that the estimate of $\|\partial_{t}({n_{e}^{2}}/{n_{i}})\|_{L^{\infty}}$ is similar to \eqref{use 23}, thus by integration by parts,
\begin{equation*}
\begin{split}
I_{2111}&=-\frac{\epsilon^{2}}{2}\frac{d}{dt}\int\frac{n_{e}^{2}}{n_{i}}(\partial_{x}^{4}N_{e})^{2}
+\frac{\epsilon^{2}}{2}\int\partial_{t}(\frac{n_{e}^{2}}{n_{i}})(\partial_{x}^{4}N_{e})^{2}\\
&\leq-\frac{\epsilon^{2}}{2}\frac{d}{dt}\int\frac{n_{e}^{2}}{n_{i}}(\partial_{x}^{4}N_{e})^{2}
+C_{1}(1+\epsilon^{2}|\!|\!|(N_{e},U)|\!|\!|_{\epsilon}^{2})|\!|\!|(N_{e},U)|\!|\!|_{\epsilon}^{2}.
\end{split}
\end{equation*}
By H\"older inequality, Cauchy inequality, Sobolev embedding theorem and Lemma \ref{L1}-\ref{L3}, we have
\begin{equation*}
\begin{split}
I_{2112}+I_{2113}
\leq C_{1}(1+\epsilon^{2}|\!|\!|(N_{e},U)|\!|\!|_{\epsilon}^{2})|\!|\!|(N_{e},U)|\!|\!|_{\epsilon}^{2}.
\end{split}
\end{equation*}
By \eqref{use 11} and direct computation, we have
\begin{equation*}
\begin{split}
I_{212}\leq C_{1}(1+\epsilon^{2}|\!|\!|(N_{e}, U)|\!|\!|_{\epsilon}^{2})(1+|\!|\!|(N_{e}, U)|\!|\!|_{\epsilon}^{2}).
\end{split}
\end{equation*}

\emph{Estimate of $I_{22}$.} By direct computation, we have
\begin{equation*}
\begin{split}
I_{22}=&-\frac{\epsilon^{3}H^{2}}{4}\int\frac{1}{n_{e}n_{i}}\partial_{x}^{5}N_{e}\partial_{t}\partial_{x}^{3}(n_{e}\partial_{x}^{2}N_{e})\\
=&-\frac{\epsilon^{3}H^{2}}{4}\int\frac{1}{n_{e}n_{i}}\partial_{x}^{5}N_{e}\partial_{t}
\big(n_{e}\partial_{x}^{5}N_{e}+\sum_{\beta=1}^{3}C_{3}^{\beta}\partial_{x}^{\beta}n_{e}\partial_{x}^{5-\beta}N_{e}\big)\\
=&-\frac{\epsilon^{3}H^{2}}{4}\int\frac{1}{n_{i}}\partial_{x}^{5}N_{e}\partial_{t}\partial_{x}^{5}N_{e}
-\frac{\epsilon^{3}H^{2}}{4}\int\frac{1}{n_{e}n_{i}}\partial_{t}n_{e}(\partial_{x}^{5}N_{e})^{2}\\
&-\frac{\epsilon^{3}H^{2}}{4}\int\frac{\partial_{x}^{5}N_{e}}{n_{e}n_{i}}\partial_{t}
\big(\sum_{\beta=1}^{3}C_{3}^{\beta}\partial_{x}^{\beta}n_{e}\partial_{x}^{5-\beta}N_{e}\big)\\
=&:I_{221}+I_{222}+I_{223}.
\end{split}
\end{equation*}
By integration by parts in $t$, we have
\begin{equation*}
\begin{split}
I_{221}=-\frac{H^{2}}{4}\frac{\epsilon^{2}}{2}\frac{d}{dt}\int\frac{1}{n_{i}}(\partial_{x}^{5}N_{e})^{2}
+\frac{H^{2}}{4}\frac{\epsilon^{2}}{2}\int(\partial_{x}^{5}N_{e})^{2}\partial_{t}\frac{1}{n_{i}}.
\end{split}
\end{equation*}
Note that
\begin{equation}
\begin{split}\label{use 5}
\|\partial_{t}\frac{1}{n_{i}}\|_{L^{\infty}}\leq C(\epsilon+\epsilon^{3}\|\partial_{t}{N_{i}}\|_{L^{\infty}}).
\end{split}
\end{equation}
Therefore, by Sobolev embedding theorem and Lemma \ref{L2}, we have
\begin{equation*}
\begin{split}
\frac{H^{2}}{4}\frac{\epsilon^{2}}{2}\int(\partial_{x}^{5}N_{e})^{2}\partial_{t}\frac{1}{n_{i}}
&\leq C_{1}(1+\epsilon^{2}\|\epsilon\partial_{t}N_{i}\|_{H^{1}}^{2})(\epsilon^{3}\|\partial_{x}^{5}N_{e}\|^{2})\\
&\leq C_{1}(1+\epsilon^{2}|\!|\!|(N_{e},U)|\!|\!|_{\epsilon}^{2})|\!|\!|(N_{e},U)|\!|\!|_{\epsilon}^{2}.
\end{split}
\end{equation*}
Similarly, by Sobolev embedding theorem and Lemma \ref{L2}-\ref{L3}, we have
\begin{equation*}
\begin{split}
I_{222}\leq C_{1}(1+\epsilon^{2}\|\epsilon\partial_{t}N_{e}\|_{H^{1}}^{2})(\epsilon^{3}\|\partial_{x}^{5}N_{e}\|^{2})
\leq C_{1}(1+\epsilon^{2}|\!|\!|(N_{e},U)|\!|\!|_{\epsilon}^{2})|\!|\!|(N_{e},U)|\!|\!|_{\epsilon}^{2}.
\end{split}
\end{equation*}
By Cauchy inequality, Sobolev embedding theorem and Lemma \ref{L1}-\ref{L3}, we have
\begin{equation*}
\begin{split}
I_{223}\leq C_{1}(1+\epsilon^{2}|\!|\!|(N_{e},U)|\!|\!|_{\epsilon}^{2})|\!|\!|(N_{e},U)|\!|\!|_{\epsilon}^{2}.
\end{split}
\end{equation*}
Therefore, we have
\begin{equation}
\begin{split}\label{w2}
I_{2}\leq&-\frac{\epsilon^{2}}{2}\frac{d}{dt}\int\frac{n_{e}^{2}}{n_{i}}(\partial_{x}^{4}N_{e})^{2}
-\frac{H^{2}}{4}\frac{\epsilon^{2}}{2}\frac{d}{dt}\int\frac{1}{n_{i}}(\partial_{x}^{5}N_{e})^{2}\\
&+C_{1}(1+\epsilon^{2}|\!|\!|(N_{e},U)|\!|\!|_{\epsilon}^{2})(1+|\!|\!|(N_{e},U)|\!|\!|_{\epsilon}^{2}).
\end{split}
\end{equation}

\emph{Estimate of $I_{3}$.}
\begin{equation*}
\begin{split}
I_{3}=-\frac{\epsilon^{3}H^{2}}{4}\int(\frac{n_{e}}{n_{i}}\partial_{x}^{3}N_{e}
-\frac{\epsilon H^{2}}{4}\frac{1}{n_{e}n_{i}}\partial_{x}^{5}N_{e})\partial_{t}\partial_{x}^{3}(\frac{\partial_{x}^{4}N_{e}}{n_{e}})
=:I_{31}+I_{32}.
\end{split}
\end{equation*}

\emph{Estimate of $I_{31}$.}
By integration by parts twice,
\begin{equation*}
\begin{split}
I_{31}=&-\frac{\epsilon^{3}H^{2}}{4}\int\frac{n_{e}}{n_{i}}\partial_{x}^{5}N_{e}\partial_{tx}(\frac{\partial_{x}^{4}N_{e}}{n_{e}})
-\frac{\epsilon^{3}H^{2}}{2}\int\partial_{x}^{4}N_{e}\partial_{x}(\frac{n_{e}}{n_{i}})\partial_{tx}(\frac{\partial_{x}^{4}N_{e}}{n_{e}})\\
&-\frac{\epsilon^{3}H^{2}}{4}\int\partial_{x}^{3}N_{e}\partial_{x}^{2}(\frac{n_{e}}{n_{i}})\partial_{tx}(\frac{\partial_{x}^{4}N_{e}}{n_{e}})\\
=&:I_{311}+I_{312}+I_{313}.
\end{split}
\end{equation*}
By direct computation, we have
\begin{equation*}
\begin{split}
I_{311}=&-\frac{\epsilon^{3}H^{2}}{4}\int\frac{1}{n_{i}}\partial_{x}^{5}N_{e}\partial_{t}\partial_{x}^{5}N_{e}\\
&-\frac{\epsilon^{3}H^{2}}{4}\int\frac{n_{e}}{n_{i}}\partial_{x}^{5}N_{e}\big((\partial_{t}\frac{1}{n_{e}})\partial_{x}^{5}N_{e}
+(\partial_{t}\frac{1}{n_{e}})\partial_{t}\partial_{x}^{4}N_{e}+(\partial_{tx}\frac{1}{n_{e}})\partial_{x}^{4}N_{e}\big)\\
=&:I_{3111}+I_{3112}.
\end{split}
\end{equation*}
By integration by parts in $t$, Sobolev embedding, \eqref{use 5} and Lemma \ref{L2}, we have
\begin{equation*}
\begin{split}
I_{3111}=&-\frac{1}{2}\frac{\epsilon^{3}H^{2}}{4}\frac{d}{dt}\int\frac{1}{n_{i}}(\partial_{x}^{5}N_{e})^{2}
+\frac{\epsilon^{3}H^{2}}{4}\int\partial_{t}(\frac{1}{n_{i}})(\partial_{x}^{5}N_{e})^{2}\\
\leq &-\frac{1}{2}\frac{\epsilon^{3}H^{2}}{4}\frac{d}{dt}\int\frac{1}{n_{i}}(\partial_{x}^{5}N_{e})^{2}
+C_{1}(1+\epsilon^{2}|\!|\!|(N_{e},U)|\!|\!|_{\epsilon}^{2})|\!|\!|(N_{e},U)|\!|\!|_{\epsilon}^{2}.
\end{split}
\end{equation*}
Note that
\begin{equation}
\begin{split}\label{use 4}
\|\partial_{t}\frac{1}{n_{e}}\|_{L^{\infty}}
\leq C(\epsilon+\epsilon^{3}\|\partial_{t}N_{e}\|_{L^{\infty}}),
\end{split}
\end{equation}
and
\begin{equation}
\begin{split}\label{use 27}
\|\partial_{tx}\frac{1}{n_{e}}\|_{L^{\infty}}
\leq C\big(\epsilon+\epsilon^{3}(\|\partial_{t}N_{e}\|_{L^{\infty}}+\|\partial_{x}N_{e}\|_{L^{\infty}})
+\epsilon^{6}\|\partial_{t}N_{e}\|_{L^{\infty}}\|\partial_{x}N_{e}\|_{L^{\infty}}\big).
\end{split}
\end{equation}
By Sobolev embedding $H^1\hookrightarrow L^{\infty}$, Cauchy inequality and Lemma \ref{L2}-\ref{L3}, we have
\begin{equation*}
\begin{split}
I_{3112}\leq
C_{1}(1+\epsilon^{2}|\!|\!|(N_{e},U)|\!|\!|_{\epsilon}^{2})(1+|\!|\!|(N_{e},U)|\!|\!|_{\epsilon}^{2}).
\end{split}
\end{equation*}
Therefore, we obtain
\begin{equation*}
\begin{split}
I_{311}\leq-\frac{1}{2}\frac{\epsilon^{3}H^{2}}{4}\frac{d}{dt}\int\frac{1}{n_{i}}(\partial_{x}^{5}N_{e})^{2}
+C_{1}(1+\epsilon^{2}|\!|\!|(N_{e},U)|\!|\!|_{\epsilon}^{2})(1+|\!|\!|(N_{e},U)|\!|\!|_{\epsilon}^{2}).
\end{split}
\end{equation*}
By direct computation, we have
\begin{equation*}
\begin{split}
I_{312}&=-\frac{\epsilon^{3}H^{2}}{2}\int\partial_{x}^{4}N_{e}\partial_{x}(\frac{n_{e}}{n_{i}})\partial_{tx}(\frac{\partial_{x}^{4}N_{e}}{n_{e}})\\
&=-\frac{\epsilon^{3}H^{2}}{2}\int\partial_{x}^{4}N_{e}\partial_{x}(\frac{n_{e}}{n_{i}})
\big(\frac{1}{n_{e}}\partial_{t}\partial_{x}^{5}N_{e}+\partial_{t}\frac{1}{n_{e}}\partial_{x}^{5}N_{e}
+\partial_{x}\frac{1}{n_{e}}\partial_{t}\partial_{x}^{4}N_{e}
+\partial_{tx}\frac{1}{n_{e}}\partial_{x}^{4}N_{e}\big).
\end{split}
\end{equation*}
By Sobolev embedding, Cauchy inequality and Lemma \ref{L2}-\ref{L3}, we have
\begin{equation*}
\begin{split}
I_{312}\leq C_{1}(1+\epsilon^{2}|\!|\!|(N_{e},U)|\!|\!|_{\epsilon}^{2})(1+|\!|\!|(N_{e},U)|\!|\!|_{\epsilon}^{2}),
\end{split}
\end{equation*}
where we have used \eqref{use 11},\ \eqref{use 4}\ and\ \eqref{use 27}.
$I_{313}$ is similar to $I_{312}$, thus we have
\begin{equation}
\begin{split}\label{a2}
I_{31}\leq-\frac{1}{2}\frac{\epsilon^{3}H^{2}}{4}\frac{d}{dt}\int\frac{1}{n_{i}}(\partial_{x}^{5}N_{e})^{2}
+C_{1}(1+\epsilon^{2}|\!|\!|(N_{e},U)|\!|\!|_{\epsilon}^{2})(1+|\!|\!|(N_{e},U)|\!|\!|_{\epsilon}^{2}).
\end{split}
\end{equation}

\emph{Estimate of $I_{32}$.} By integration by parts, we have
\begin{equation*}
\begin{split}
I_{32}&=\frac{\epsilon^{3}H^{2}}{4}\frac{\epsilon H^{2}}{4}\int\frac{1}{n_{e}n_{i}}
\partial_{x}^{5}N_{e}\partial_{t}\partial_{x}^{3}(\frac{\partial_{x}^{4}N_{e}}{n_{e}})\\
&=-\frac{\epsilon^{3}H^{2}}{4}\frac{\epsilon H^{2}}{4}\int\frac{1}{n_{e}n_{i}}\partial_{x}^{6}N_{e}
\partial_{t}\partial_{x}^{2}(\frac{\partial_{x}^{4}N_{e}}{n_{e}})
-\frac{\epsilon^{3}H^{2}}{4}\frac{\epsilon H^{2}}{4}\int(\partial_{x}\frac{1}{n_{e}n_{i}})\partial_{x}^{5}N_{e}
\partial_{t}\partial_{x}^{2}(\frac{\partial_{x}^{4}N_{e}}{n_{e}})\\
&=:I_{321}+I_{322}.
\end{split}
\end{equation*}
By direct computation, we have
\begin{equation*}
\begin{split}
I_{321}=&-\frac{\epsilon^{3}H^{2}}{4}\frac{\epsilon H^{2}}{4}\int\frac{1}{n_{e}n_{i}}\partial_{x}^{6}N_{e}\partial_{t}\partial_{x}^{6}N_{e}\\
&-\frac{\epsilon^{3}H^{2}}{4}\frac{\epsilon H^{2}}{4}\int\frac{1}{n_{e}n_{i}}\partial_{x}^{6}N_{e}
\big((\partial_{t}\frac{1}{n_{e}})\partial_{x}^{6}N_{e}+2(\partial_{tx}\frac{1}{n_{e}})\partial_{x}^{5}N_{e}\\
&\ \ \ \ \ \ \ \ \ \ \ \ \ \ \ \ \ \ \ \ +2(\partial_{x}\frac{1}{n_{e}})\partial_{t}\partial_{x}^{5}N_{e}+(\partial_{x}^{2}\frac{1}{n_{e}})\partial_{t}\partial_{x}^{4}N_{e}
+(\partial_{t}\partial_{x}^{2}\frac{1}{n_{e}})\partial_{x}^{4}N_{e}\big)\\
=&:I_{3211}+I_{3212}.
\end{split}
\end{equation*}
By integration by parts in $t$, we have
\begin{equation*}
\begin{split}
I_{3211}=-\frac{1}{2}\frac{\epsilon^{3}H^{2}}{4}\frac{\epsilon H^{2}}{4}\frac{d}{dt}\int\frac{1}{n_{e}^{2}n_{i}}(\partial_{x}^{6}N_{e})^{2}
+\frac{1}{2}\frac{\epsilon^{3}H^{2}}{4}\frac{\epsilon H^{2}}{4}\int\partial_{t}(\frac{1}{n_{e}^{2}n_{i}})(\partial_{x}^{6}N_{e})^{2}.
\end{split}
\end{equation*}
Similar to \eqref{use 23}, $\|\partial_{t}({1}/{n_{e}^{2}n_{i}})\|_{L^{\infty}}$ has same bound with $\|\partial_{t}({n_{e}}/{n_{i}})\|_{L^{\infty}}$. Therefore, by Sobolev embedding $H^1\hookrightarrow L^{\infty}$ and Lemma \ref{L2}-\ref{L3}, we have
\begin{equation*}
\begin{split}
\frac{1}{2}\frac{\epsilon^{3}H^{2}}{4}\frac{\epsilon H^{2}}{4}\int\partial_{t}(\frac{1}{n_{e}^{2}n_{i}})(\partial_{x}^{6}N_{e})^{2}
&\leq C(1+\epsilon^{2}(\|\epsilon\partial_{t}N_{e}\|_{L^{\infty}}^{2}
+\|\epsilon\partial_{t}N_{i}\|_{L^{\infty}}^{2}))\epsilon^{4}\|\partial_{x}^{6}N_{e}\|^{2}\\
&\leq C(1+\epsilon^{2}|\!|\!|(N_{e},U)|\!|\!|_{\epsilon}^{2})|\!|\!|(N_{e},U)|\!|\!|_{\epsilon}^{2}.
\end{split}
\end{equation*}
By direct computation, we have
\begin{equation}
\begin{split}\label{use 235}
\|\partial_{t}\partial_{x}^{2}\frac{1}{n_{e}}\|_{L^{\infty}}
\leq &C\big(\epsilon+\epsilon^{3}(\|\partial_{x}N_{e}\|_{L^{\infty}}+\|\partial_{t}N_{e}\|_{L^{\infty}}
+\|\partial_{t}\partial_{x}^{2}N_{e}\|_{L^{\infty}})\\
&+\epsilon^{6}(\|\partial_{x}N_{e}\|_{L^{\infty}}\|\partial_{t}N_{e}\|_{L^{\infty}}
+\|\partial_{x}N_{e}\|_{L^{\infty}}^{2}+\|\partial_{t}N_{e}\|_{L^{\infty}}\|\partial_{x}^{2}N_{e}\|_{L^{\infty}})\\
&+\epsilon^{9}\|\partial_{t}N_{e}\|_{L^{\infty}}\|\partial_{x}N_{e}\|_{L^{\infty}}^{2}\big).
\end{split}
\end{equation}
By \eqref{use 1},\ \eqref{use 4},\ \eqref{use 27}\ and\ \eqref{use 235}, we have
\begin{equation*}
\begin{split}
I_{3212}\leq C_{1}(1+\epsilon^{2}|\!|\!|(N_{e},U)|\!|\!|_{\epsilon}^{4})(1+|\!|\!|(N_{e},U)|\!|\!|_{\epsilon}^{2}),
\end{split}
\end{equation*}
where we have used Sobolev embedding theorem, Lemma \ref{L2}\ and\ \ref{L3}. Thus, we have
\begin{equation}
\begin{split}\label{two}
I_{321}\leq-\frac{1}{2}\frac{\epsilon^{3}H^{2}}{4}\frac{\epsilon H^{2}}{4}\frac{d}{dt}\int\frac{1}{n_{e}^{2}n_{i}}(\partial_{x}^{6}N_{e})^{2}
+C_{1}(1+\epsilon^{2}|\!|\!|(N_{e},U)|\!|\!|_{\epsilon}^{4})(1+|\!|\!|(N_{e},U)|\!|\!|_{\epsilon}^{2}).
\end{split}
\end{equation}
By integration by parts, we have
\begin{equation*}
\begin{split}
I_{322}=&-\frac{\epsilon^{3}H^{2}}{4}\frac{\epsilon H^{2}}{4}\int\partial_{x}^{5}N_{e}(\partial_{x}\frac{1}{n_{e}n_{i}})
\partial_{t}\partial_{x}^{2}(\frac{\partial_{x}^{4}N_{e}}{n_{e}})\\
=&\frac{\epsilon^{3}H^{2}}{4}\frac{\epsilon H^{2}}{4}\int\partial_{x}^{6}N_{e}(\partial_{x}\frac{1}{n_{e}n_{i}})
\partial_{t}\partial_{x}(\frac{\partial_{x}^{4}N_{e}}{n_{e}})\\
&+\frac{\epsilon^{3}H^{2}}{4}\frac{\epsilon H^{2}}{4}\int\partial_{x}^{5}N_{e}(\partial_{x}^{2}\frac{1}{n_{e}n_{i}})
\partial_{t}\partial_{x}(\frac{\partial_{x}^{4}N_{e}}{n_{e}})\\
=&:I_{3221}+I_{3222}.
\end{split}
\end{equation*}
By direct computation, we have
\begin{equation*}
\begin{split}
I_{3221}=\frac{\epsilon^{3}H^{2}}{4}\frac{\epsilon H^{2}}{4}\int&\partial_{x}^{6}N_{e}(\partial_{x}\frac{1}{n_{e}n_{i}})
\Big(\frac{1}{n_{e}}\partial_{t}\partial_{x}^{5}N_{e}+(\partial_{t}\frac{1}{n_{e}})\partial_{x}^{5}N_{e}\\
&+(\partial_{x}\frac{1}{n_{e}})\partial_{t}\partial_{x}^{4}N_{e}+(\partial_{tx}\frac{1}{n_{e}})\partial_{x}^{4}N_{e}\Big).
\end{split}
\end{equation*}
By \eqref{one order},\ \eqref{use 11},\ \eqref{use 4}\ and\  \eqref{use 27}, Sobolev embedding theorem and Lemma \ref{L2}-\ref{L3}, we have
\begin{equation*}
\begin{split}
I_{3221}\leq C_{1}(1+\epsilon^{2}|\!|\!|(N_{e},U)|\!|\!|_{\epsilon}^{4})(1+|\!|\!|(N_{e},U)|\!|\!|_{\epsilon}^{2}).
\end{split}
\end{equation*}
By direct computation, we have
\begin{equation}
\begin{split}\label{use233}
\|\partial_{x}^{2}(\frac{1}{n_{e}n_{i}})\|_{L^{\infty}}
\leq &C\big(\epsilon+\epsilon^{3}(\|\partial_{x}N_{e}\|_{L^{\infty}}+\|\partial_{x}N_{i}\|_{L^{\infty}})
+\epsilon^{6}(\|\partial_{x}N_{e}\|_{L^{\infty}}\|\partial_{x}N_{i}\|_{L^{\infty}}\\
&+\|\partial_{x}N_{e}\|_{L^{\infty}}^{2}+\|\partial_{x}N_{i}\|_{L^{\infty}}^{2})\big).
\end{split}
\end{equation}
Thus by \eqref{one order},\ \eqref{use 4},\  \eqref{use 27}\  and\ \eqref{use233},  Sobolev embedding theorem and Lemma \ref{L1}-\ref{L3}, we have
\begin{equation*}
\begin{split}
I_{3222}\leq C_{1}(1+\epsilon^{2}|\!|\!|(N_{e},U)|\!|\!|_{\epsilon}^{4})(1+|\!|\!|(N_{e},U)|\!|\!|_{\epsilon}^{2}).
\end{split}
\end{equation*}
Therefore, we have
\begin{equation}
\begin{split}\label{a1}
I_{322}\leq C_{1}(1+\epsilon^{2}|\!|\!|(N_{e},U)|\!|\!|_{\epsilon}^{4})(1+|\!|\!|(N_{e},U)|\!|\!|_{\epsilon}^{2}).
\end{split}
\end{equation}
Adding \eqref{two} and \eqref{a1}, we have
\begin{equation}
\begin{split}\label{three}
I_{32}\leq-\frac{1}{2}\frac{\epsilon^{3}H^{2}}{4}\frac{\epsilon H^{2}}{4}\frac{d}{dt}\int\frac{1}{n_{e}^{2}n_{i}}(\partial_{x}^{6}N_{e})^{2}
+C_{1}(1+\epsilon^{2}|\!|\!|(N_{e},U)|\!|\!|_{\epsilon}^{4})(1+|\!|\!|(N_{e},U)|\!|\!|_{\epsilon}^{2}).
\end{split}
\end{equation}
Combining to \eqref{a2} and \eqref{three}, we have
\begin{equation}
\begin{split}\label{w3}
I_{3}\leq&-\frac{1}{2}\frac{\epsilon^{3}H^{2}}{4}\frac{d}{dt}\int\frac{1}{n_{i}}(\partial_{x}^{5}N_{e})^{2}
-\frac{1}{2}\frac{\epsilon^{3}H^{2}}{4}\frac{\epsilon H^{2}}{4}\frac{d}{dt}\int\frac{1}{n_{e}^{2}n_{i}}(\partial_{x}^{6}N_{e})^{2}\\
&+C_{1}(1+\epsilon^{2}|\!|\!|(N_{e},U)|\!|\!|_{\epsilon}^{4})(1+|\!|\!|(N_{e},U)|\!|\!|_{\epsilon}^{2}).
\end{split}
\end{equation}
Thus combining \eqref{w1},\ \eqref{w2}\ and\ \eqref{w3}, we obtain
\begin{equation*}
\begin{split}
\sum_{i=1}^{3}I_{i}
&\leq
-\frac{1}{2}\frac{d}{dt}\left\{\int\frac{n_{e}}{n_{i}}(\partial_{x}^{2}N_{e})^{2}+\epsilon\int\frac{n_{e}^{2}}{n_{i}}(\partial_{x}^{3}N_{e})^{2}
+\frac{\epsilon^{2}H^{2}}{4}\int\frac{1}{n_{i}}(\partial_{x}^{4}N_{e})^{2}\right\}\\
&-\frac{1}{2}\frac{\epsilon H^{2}}{4}\frac{d}{dt}\left\{\int\frac{1}{n_{e}n_{i}}(\partial_{x}^{3}N_{e})^{2}+\epsilon\int\frac{1}{n_{i}}(\partial_{x}^{4}N_{e})^{2}
+\frac{\epsilon^{2} H^{2}}{4}\int\frac{1}{n_{e}^{2}n_{i}}(\partial_{x}^{5}N_{e})^{2}\right\}\\
&+C_{1}(1+\epsilon^{2}|\!|\!|(N_{e},U)|\!|\!|_{\epsilon}^{4})(1+|\!|\!|(N_{e},U)|\!|\!|_{\epsilon}^{2}).
\end{split}
\end{equation*}
By Lemma \ref{L2}-\ref{L3}, we have
\begin{equation*}
\begin{split}
I_{4,5,6,7}\leq C_{1}(1+\epsilon^{2}|\!|\!|(N_{e},U)|\!|\!|_{\epsilon}^{2})(1+|\!|\!|(N_{e},U)|\!|\!|_{\epsilon}^{2}).
\end{split}
\end{equation*}

\emph{Estimate of $I_{18}$.} By direct computation, we have
\begin{equation*}
\begin{split}
I_{18}&=\frac{3 H^{2}}{4}\epsilon^{12}\int(\frac{n_{e}}{n_{i}}\partial_{x}^{3}N_{e}
-\frac{\epsilon H^{2}}{4}\frac{1}{n_{e}n_{i}}\partial_{x}^{5}N_{e})
\partial_{x}^{3}\big\{(\partial_{t}\frac{1}{n_{e}^{4}})(\partial_{x}N_{e})^{4}
+4\frac{1}{n_{e}^{4}}(\partial_{x}N_{e})^{3}\partial_{tx}N_{e}\big\}\\
&=:I_{181}+I_{182}\ .
\end{split}
\end{equation*}

\emph{Estimate of $I_{181}$.} Using commutator notation \eqref{w}, we have
\begin{equation*}
\begin{split}
\partial_{x}^{3}(\partial_{t}(\frac{1}{n_{e}^{4}})(\partial_{x}N_{e})^{4}
=[\partial_{x}^{3},\partial_{t}(\frac{1}{n_{e}^{4}})](\partial_{x}N_{e})^{4}
+\partial_{t}(\frac{1}{n_{e}^{4}})\partial_{x}^{3}\big((\partial_{x}^{4}N_{e})^{4}\big).
\end{split}
\end{equation*}
By commutator estimate \eqref{w11}, we have
\begin{equation*}
\begin{split}
\|[\partial_{x}^{3},\partial_{t}(\frac{1}{n_{e}^{4}})](\partial_{x}N_{e})^{4}\|
\leq\|\partial_{tx}(\frac{1}{n_{e}^{4}})\|_{L^{\infty}}\|\partial_{x}^{2}(\partial_{x}N_{e})^{4}\|
+\|\partial_{t}\partial_{x}^{3}(\frac{1}{n_{e}^{4}})\|\|\partial_{x}N_{e}\|^{4}_{L^{\infty}}.
\end{split}
\end{equation*}
Note that the estimate of $\|\partial_{tx}(\frac{1}{n_{e}^{4}})\|_{L^{\infty}}$ is similar to that for \eqref{use 27}. We note that
\begin{equation}
\begin{split}\label{h1}
\|\partial_{x}^{2}(\partial_{x}N_{e})^{4}\|\leq C(\|\partial_{x}N_{e}\|^{2}_{L^{\infty}}\|\partial_{x}^{2}N_{e}\|_{L^{\infty}}\|\partial_{x}^{2}N_{e}\|
+\|\partial_{x}N_{e}\|^{3}_{L^{\infty}}\|\partial_{x}^{3}N_{e}\|),
\end{split}
\end{equation}
and
\begin{equation}
\begin{split}\label{h2}
\|\partial_{t}&\partial_{x}^{3}(\frac{1}{n_{e}^{4}})\| \leq C\big(\epsilon+\epsilon^{3}(\|\partial_{x}N_{e}\|+\|\partial_{t}N_{e}\|+\|\partial_{x}^{2}N_{e}\|
+\|\partial_{tx}N_{e}\|\\&+\|\partial_{t}\partial_{x}^{2}N_{e}\|+\|\partial_{t}\partial_{x}^{2}N_{e}\|)
+\epsilon^{6}(\|\partial_{x}N_{e}\|^{2}+\|\partial_{t}N_{e}\|\|\partial_{x}N_{e}\|_{L^{\infty}}\\&+\|\partial_{x}N_{e}\|_{L^{\infty}}\|\partial_{tx}N_{e}\|
+\|\partial_{t}N_{e}\|_{L^{\infty}}\|\partial_{x}^{2}N_{e}\|+\|\partial_{t}N_{e}\partial_{x}^{3}N_{e}\|
+\|\partial_{tx}N_{e}\|_{L^{\infty}}\|\partial_{x}^{2}N_{e}\|\\&+\|\partial_{x}N_{e}\|_{L^{\infty}}\|\partial_{t}\partial_{x}^{2}N_{e}\|)
+\epsilon^{9}(\|\partial_{x}N_{e}\|_{L^{\infty}}^{2}\|\partial_{x}N_{e}\|
+\|\partial_{t}N_{e}\|\|\partial_{x}N_{e}\|_{L^{\infty}}\\
&+\|\partial_{t}N_{e}\|_{L^{\infty}}\|\partial_{x}N_{e}\|_{L^{\infty}}\|\partial_{x}^{2}N_{e}\|
+\|\partial_{tx}N_{e}\|\|\partial_{x}N_{e}\|_{L^{\infty}}^{2}\\
&+\epsilon^{12}\|\partial_{t}N_{e}\|_{L^{\infty}}\|\partial_{x}N_{e}\|_{L^{\infty}}^{2}\|\partial_{x}N_{e}\|\big).
\end{split}
\end{equation}
Thus by \eqref{use 27},\ \eqref{use 4},\ \eqref{h1},\  \eqref{h2}\  and\ \eqref{use 17},  Sobolev embedding theorem and Lemma \ref{L2}-\ref{L3}, we have
\begin{equation*}
\begin{split}
I_{181}\leq C_{1}(1+\epsilon^{2}|\!|\!|(N_{e},U)|\!|\!|_{\epsilon}^{4})
(1+|\!|\!|(N_{e},U)|\!|\!|_{\epsilon}^{2}).
\end{split}
\end{equation*}
\emph{Estimate of $I_{182}$.} Using commutator notation \eqref{w}, we have
\begin{equation*}
\begin{split}
\partial_{x}^{3}(\frac{1}{n_{e}^{4}}(\partial_{x}N_{e})^{3}\partial_{tx}N_{e})
=[\partial_{x}^{3},\frac{1}{n_{e}^{4}}](\partial_{x}N_{e})^{3}\partial_{tx}N_{e})
+\frac{1}{n_{e}^{4}}\partial_{x}^{3}((\partial_{x}N_{e})^{3}\partial_{tx}N_{e}).
\end{split}
\end{equation*}
By commutator estimate \eqref{w11} of Lemma \ref{L9}, we have
\begin{equation*}
\begin{split}
\|[\partial_{x}^{3},\frac{1}{n_{e}^{4}}](\partial_{x}N_{e})^{3}\partial_{tx}N_{e})\|
\leq&\|\partial_{x}(\frac{1}{n_{e}^{4}})\|_{L^{\infty}}\|\partial_{x}^{2}((\partial_{x}N_{e})^{3}\partial_{tx}N_{e})\|\\
&+\|\partial_{x}^{3}(\frac{1}{n_{e}^{4}})\|\|\partial_{x}N_{e}\|_{L^{\infty}}^{3}\|\partial_{tx}N_{e}\|_{L^{\infty}}.
\end{split}
\end{equation*}
By direct computation, we have
\begin{equation}
\begin{split}\label{m3}
\|\partial_{x}^{2}&((\partial_{x}N_{e})^{3}\partial_{tx}N_{e})\|
\leq C(\|\partial_{x}N_{e}\|_{L^{\infty}}\|\partial_{x}^{2}N_{e}\|_{L^{\infty}}^{2}\|\partial_{tx}N_{e}\|\\
&+\|\partial_{x}N_{e}\|_{L^{\infty}}^{2}\|\partial_{x}^{3}N_{e}\|_{L^{\infty}}\|\partial_{tx}N_{e}\|
+\|\partial_{x}N_{e}\|_{L^{\infty}}^{2}\|\partial_{x}^{2}N_{e}\|_{L^{\infty}}\|\partial_{t}\partial_{x}^{2}N_{e}\|\\
&+\|\partial_{x}N_{e}\|_{L^{\infty}}^{3}\|\partial_{t}\partial_{x}^{3}N_{e}\|),
\end{split}
\end{equation}
and
\begin{equation}
\begin{split}\label{m4}
\|\partial_{x}^{3}&((\partial_{x}N_{e})^{3}\partial_{tx}N_{e})\|
\leq C(\|\partial_{x}N_{e}\|_{L^{\infty}}\|\partial_{x}^{2}N_{e}\|_{L^{\infty}}\|\partial_{x}^{3}N_{e}\|_{L^{\infty}}\|\partial_{tx}N_{e}\|\\
&+\|\partial_{x}^{2}N_{e}\|^{3}_{L^{\infty}}\|\partial_{tx}N_{e}\|
+\|\partial_{x}N_{e}\|_{L^{\infty}}\|\partial_{x}^{2}N_{e}\|^{2}_{L^{\infty}}\|\partial_{t}\partial_{x}^{2}N_{e}\|\\
&+\|\partial_{x}N_{e}\|^{2}_{L^{\infty}}\|\partial_{x}^{4}N_{e}\|_{L^{\infty}}\|\partial_{tx}N_{e}\|
+\|\partial_{x}N_{e}\|^{2}_{L^{\infty}}\|\partial_{x}^{3}N_{e}\|_{L^{\infty}}\|\partial_{t}\partial_{x}^{2}N_{e}\|\\
&+\|\partial_{x}N_{e}\|^{2}_{L^{\infty}}\|\partial_{x}^{2}N_{e}\|_{L^{\infty}}\|\partial_{t}\partial_{x}^{3}N_{e}\|
+\|\partial_{x}N_{e}\|^{3}_{L^{\infty}}\|\partial_{t}\partial_{x}^{4}N_{e}\|).
\end{split}
\end{equation}
Thus by \eqref{use 1},\ \eqref{use 21},\  \eqref{m3}\  and\ \eqref{m4},  Sobolev embedding theorem and Lemma \ref{L2}-\ref{L3}, we have
\begin{equation*}
\begin{split}
I_{182}\leq C_{1}(1+\epsilon^{2}|\!|\!|(N_{e},U)|\!|\!|_{\epsilon}^{6})
(1+|\!|\!|(N_{e},U)|\!|\!|_{\epsilon}^{2}).
\end{split}
\end{equation*}
Therefore, we have
\begin{equation*}
\begin{split}
I_{18}\leq C_{1}(1+\epsilon^{2}|\!|\!|(N_{e},U)|\!|\!|_{\epsilon}^{6})
(1+|\!|\!|(N_{e},U)|\!|\!|_{\epsilon}^{2}).
\end{split}
\end{equation*}
The estimates of $I_{8}\sim I_{11},\ I_{13}\ and\ I_{15}$ are similar to that for $I_{18}$.

\emph{Estimate of $I_{21}$.} By direct computation, we have
\begin{equation*}
\begin{split}
I_{21}&=\frac{3 H^{2}}{4}\epsilon^{6}\int(\frac{n_{e}}{n_{i}}\partial_{x}^{3}N_{e}
-\frac{\epsilon H^{2}}{4}\frac{1}{n_{e}n_{i}}\partial_{x}^{5}N_{e})
\partial_{x}^{3}\big\{(\partial_{t}\frac{1}{n_{e}^{2}})\partial_{x}N_{e}\partial_{x}^{3}N_{e}
+\frac{1}{n_{e}^{2}}\partial_{t}(\partial_{x}N_{e}\partial_{x}^{3}N_{e})\big\}\\
&=:I_{211}+I_{212}.
\end{split}
\end{equation*}

\emph{Estimate of $I_{211}$.} Using commutator notation \eqref{w}, we have
\begin{equation*}
\begin{split}
\partial_{x}^{3}((\partial_{t}\frac{1}{n_{e}^{2}})\partial_{x}N_{e}\partial_{x}^{3}N_{e}
=[\partial_{x}^{3},\partial_{t}\frac{1}{n_{e}^{2}}]\partial_{x}N_{e}\partial_{x}^{3}N_{e}
+(\partial_{t}\frac{1}{n_{e}^{2}})\partial_{x}^{3}\big(\partial_{x}N_{e}\partial_{x}^{3}N_{e}\big).
\end{split}
\end{equation*}
By commutator estimate \eqref{w11} in Lemma \ref{L9}, we have
\begin{equation*}
\begin{split}
\|[\partial_{x}^{3},\partial_{t}(\frac{1}{n_{e}^{2}})]\partial_{x}N_{e}\partial_{x}^{3}N_{e}\|
\leq&\|\partial_{tx}(\frac{1}{n_{e}^{2}})\|_{L^{\infty}}\|\partial_{x}^{2}(\partial_{x}N_{e}\partial_{x}^{3}N_{e})\|\\
&+\|\partial_{t}\partial_{x}^{3}(\frac{1}{n_{e}^{2}})\|\|\partial_{x}N_{e}\|_{L^{\infty}}\|\partial_{x}^{3}N_{e}\|_{L^{\infty}}.
\end{split}
\end{equation*}
Note that the estimate of $\|\partial_{tx}(\frac{1}{n_{e}^{2}})\|_{L^{\infty}}$ is similar to that for \eqref{use 27}. By direct computation, we note that
\begin{equation}
\begin{split}\label{m7}
\|\partial_{x}^{2}(\partial_{x}N_{e}\partial_{x}^{3}N_{e})\|
\leq C(\|\partial_{x}^{3}N_{e}\|_{L^{\infty}}\|\partial_{x}^{3}N_{e}\|
+\|\partial_{x}N_{e}\|_{L^{\infty}}\|\partial_{x}^{5}N_{e}\|
+\|\partial_{x}^{2}N_{e}\|_{L^{\infty}}\|\partial_{x}^{4}N_{e}\|),
\end{split}
\end{equation}
and
\begin{equation}
\begin{split}\label{m8}
\|\partial_{x}^{3}(\partial_{x}N_{e}\partial_{x}^{3}N_{e})\|
\leq C(\|\partial_{x}^{3}N_{e}\|_{L^{\infty}}\|\partial_{x}^{4}N_{e}\|
+\|\partial_{x^{2}}N_{e}\|_{L^{\infty}}\|\partial_{x}^{5}N_{e}\|
+\|\partial_{x}N_{e}\|_{L^{\infty}}\|\partial_{x}^{6}N_{e}\|).
\end{split}
\end{equation}
Thus by \eqref{use 27}, \eqref{h2}, \eqref{m7}, \eqref{m8} and \eqref{use 4}, Sobolev embedding theorem and Lemma \ref{L2}-\ref{L3}, we have
\begin{equation*}
\begin{split}
I_{211}\leq C_{1}(1+\epsilon^{2}|\!|\!|(N_{e},U)|\!|\!|_{\epsilon}^{6})
(1+|\!|\!|(N_{e},U)|\!|\!|_{\epsilon}^{2}).
\end{split}
\end{equation*}

\emph{Estimate of $I_{212}$.} Using commutator notation \eqref{w}, we have
\begin{equation*}
\begin{split}
I_{212}=&\frac{3 H^{2}}{4}\epsilon^{6}\int(\frac{n_{e}}{n_{i}}\partial_{x}^{3}N_{e}
-\frac{\epsilon H^{2}}{4}\frac{1}{n_{e}n_{i}}\partial_{x}^{5}N_{e})
\partial_{x}^{3}\big\{
\frac{1}{n_{e}^{2}}\partial_{t}(\partial_{x}N_{e}\partial_{x}^{3}N_{e})\big\}\\
=&\frac{3 H^{2}}{4}\epsilon^{6}\int(\frac{n_{e}}{n_{i}}\partial_{x}^{3}N_{e}
-\frac{\epsilon H^{2}}{4}\frac{1}{n_{e}n_{i}}\partial_{x}^{5}N_{e})
\big\{[\partial_{x}^{3},\frac{1}{n_{e}^{2}}]\partial_{t}(\partial_{x}N_{e}\partial_{x}^{3}N_{e})\\
&\ \ \ \ \ \ \ \ \ \ \ \ \ +\frac{1}{n_{e}^{2}}\partial_{t}\partial_{x}^{3}(\partial_{x}N_{e}\partial_{x}^{3}N_{e})\big\}\\
=:&I_{2121}+I_{2122}.
\end{split}
\end{equation*}
By commutator estimate \eqref{w11} in Lemma \ref{L9}, we have
\begin{equation*}
\begin{split}
\|[\partial_{x}^{3},\frac{1}{n_{e}^{2}}]\partial_{t}(\partial_{x}N_{e}\partial_{x}^{3}N_{e})\|
\leq&\|\partial_{x}(\frac{1}{n_{e}^{2}})\|_{L^{\infty}}\|\partial_{x}^{2}(\partial_{t}(\partial_{x}N_{e}\partial_{x}^{3}N_{e}))\|\\
&+\|\partial_{x}^{3}(\frac{1}{n_{e}^{2}})\|\|\partial_{t}(\partial_{x}N_{e}\partial_{x}^{3}N_{e})\|_{L^{\infty}}.
\end{split}
\end{equation*}
By direct computation, we have
\begin{equation}
\begin{split}\label{m88}
\|\partial_{t}(\partial_{x}N_{e}\partial_{x}^{3}N_{e})\|_{L^{\infty}}
\leq C(\|\partial_{tx}N_{e}\|_{L^{\infty}}\|\partial_{x}^{3}N_{e}\|_{L^{\infty}}
+\|\partial_{x}N_{e}\|_{L^{\infty}}\|\partial_{t}\partial_{x}^{3}N_{e}\|_{L^{\infty}},
\end{split}
\end{equation}
and
\begin{equation}
\begin{split}\label{m9}
\|\partial_{t}\partial_{x}^{2}(\partial_{x}N_{e}\partial_{x}^{3}N_{e})\|
\leq &C(\|\partial_{x}N_{e}\|_{L^{\infty}}\|\partial_{t}\partial_{x}^{5}N_{e}\|
+\|\partial_{x}^{2}N_{e}\|_{L^{\infty}}\|\partial_{t}\partial_{x}^{4}N_{e}\|\\
&+\|\partial_{x}^{3}N_{e}\|_{L^{\infty}}\|\partial_{t}\partial_{x}^{3}N_{e}\|
+\|\partial_{x}^{4}N_{e}\|_{L^{\infty}}\|\partial_{t}\partial_{x}^{2}N_{e}\|\\
&+\|\partial_{x}^{5}N_{e}\|_{L^{\infty}}\|\partial_{tx}N_{e}\|.
\end{split}
\end{equation}
Thus by \eqref{use 1},\ \eqref{use 17},\  \eqref{m88}\  and\ \eqref{m9},  Sobolev embedding theorem and Lemma \ref{L2}-\ref{L3},
\begin{equation*}
\begin{split}
I_{2121}\leq C_{1}(1+\epsilon^{2}|\!|\!|(N_{e},U)|\!|\!|_{\epsilon}^{4})
(1+|\!|\!|(N_{e},U)|\!|\!|_{\epsilon}^{2}).
\end{split}
\end{equation*}
By integration by parts, we have
\begin{equation*}
\begin{split}
I_{2122}=&-\frac{3 H^{2}}{4}\epsilon^{6}\int\big\{\partial_{x}(\frac{1}{n_{e}n_{i}})\partial_{x}^{3}N_{e}
-\frac{\epsilon H^{2}}{4}\partial_{x}(\frac{1}{n_{e}^{3}n_{i}})\partial_{x}^{5}N_{e}\big\}
\partial_{t}\partial_{x}^{2}(\partial_{x}N_{e}\partial_{x}^{3}N_{e})\\
&-\frac{3 H^{2}}{4}\epsilon^{6}\int\big\{\frac{1}{n_{e}n_{i}}\partial_{x}^{4}N_{e}
-\frac{\epsilon H^{2}}{4}\frac{1}{n_{e}^{3}n_{i}}\partial_{x}^{6}N_{e}\big\}
\partial_{t}\partial_{x}^{2}(\partial_{x}N_{e}\partial_{x}^{3}N_{e}).
\end{split}
\end{equation*}
By direct computation, we note that $\partial_{x}\frac{1}{n_{e}^{3}n_{i}}$ has same estimate with \eqref{use 11}, thus by \eqref{m9}, Sobolev embedding theorem and Lemma \ref{L1}-\ref{L3}, we have
\begin{equation*}
\begin{split}
I_{2122}\leq C_{1}(1+\epsilon^{2}|\!|\!|(N_{e},U)|\!|\!|_{\epsilon}^{2})
(1+|\!|\!|(N_{e},U)|\!|\!|_{\epsilon}^{2}).
\end{split}
\end{equation*}
Therefore, we have
\begin{equation*}
\begin{split}
I_{212}\leq C_{1}(1+\epsilon^{2}|\!|\!|(N_{e},U)|\!|\!|_{\epsilon}^{4})
(1+|\!|\!|(N_{e},U)|\!|\!|_{\epsilon}^{2}),
\end{split}
\end{equation*}
and hence
\begin{equation*}
\begin{split}
I_{21}\leq C_{1}(1+\epsilon^{2}|\!|\!|(N_{e},U)|\!|\!|_{\epsilon}^{6})
(1+|\!|\!|(N_{e},U)|\!|\!|_{\epsilon}^{2}).
\end{split}
\end{equation*}
The estimates of $I_{12},\  I_{14},\ I_{16},\ I_{17},\ I_{19}\ and\ I_{20}$ are similar to that of $I_{18}$. According to the Lemma \ref{L8}, we have
\begin{equation*}
\begin{split}
I_{22}\leq C_{1}(1+\epsilon^{2}|\!|\!|(N_{e},U)|\!|\!|_{\epsilon}^{4})
(1+|\!|\!|(N_{e},U)|\!|\!|_{\epsilon}^{2}).
\end{split}
\end{equation*}
The proof of Lemma \ref{Lem-u2} is then complete.
\end{proof}

\section{Proof of Theorem \ref{thm1}}
\begin{proof}[\textbf{Proof of Theorem \ref{thm1} }]
Adding Propositions \ref{P1-P2} with $\gamma=0,1,2$  and Proposition \ref{P3} together, we obtain
\begin{equation}\label{Gron}
\begin{split}
\frac{1}{2}&\frac{d}{dt}(\|U\|_{H^{2}}^{2}+\epsilon\|\partial_{x}^{3}U\|_{L^{2}}^{2})
+\frac{1}{2}\frac{d}{dt}\Big\{\int\frac{n_{e}}{n_{i}}(\sum_{i=0}^{2}| \partial_{x}^{i}N_{e}|^{2}+\epsilon|\partial_{x}^{3}N_{e}|^{2})\Big\}\\
&+\frac{1}{2}\frac{d}{dt}\Big\{\int\epsilon(\frac{n_{e}^{2}}{n_{i}}+\frac{H^{2}}{8n_{e}n_{i}})(\sum_{i=0}^{3}| \partial_{x}^{i}N_{e}|^{2}+\epsilon|\partial_{x}^{4}N_{e}|^{2})\Big\}\\
&+\frac{3H^{2}}{16}\frac{d}{dt}\Big\{\int\frac{\epsilon^{2}}{n_{i}}(\sum_{i=0}^{4}| \partial_{x}^{i}N_{e}|^{2}+\epsilon|\partial_{x}^{5}N_{e}|^{2})\Big\}\\
&+\frac{H^{2}}{8}\frac{d}{dt}\Big\{\int\frac{\epsilon^{3}}{n_{e}^{2}n_{i}}(\sum_{i=0}^{5}| \partial_{x}^{i}N_{e}|^{2}+\epsilon|\partial_{x}^{6}N_{e}|^{2})\Big\}\\
\leq &C(1+\epsilon^{2}|\!|\!|(N_{e},U)|\!|\!|_{\epsilon}^{6})(1+|\!|\!|(N_{e},U)|\!|\!|_{\epsilon}^{2}).
\end{split}
\end{equation}
Integrating the inequality \eqref{Gron} over $(0,t)$ yields
\begin{equation*}
\begin{split}
|\!|\!|(N_{e},U)(t)|\!|\!|_{\epsilon}^{2}
&\leq C|\!|\!|(N_{e},U)(0)|\!|\!|_{\epsilon}^{2}
+\int_{0}^{t}C_{1}(1+\epsilon^{2}|\!|\!|(N_{e},U)|\!|\!|_{\epsilon}^{6})
(1+|\!|\!|(N_{e},U)|\!|\!|_{\epsilon}^{2})ds\\
&\leq C_{1}|\!|\!|(N_{e},U)(0)|\!|\!|_{\epsilon}^{2}+\int_{0}^{t}C_{1}
(1+\epsilon^{2}\tilde{C})
(1+|\!|\!|(N_{e},U)|\!|\!|_{\epsilon}^{2})ds,
\end{split}
\end{equation*}
where $C$ is an absolute constant.

Recall that $C_1$ depends on $|\!|\!|(N_{e},U)|\!|\!|^2_{\epsilon}$ through $\epsilon|\!|\!|(N_{e},U)|\!|\!|^2_{\epsilon}$ and is nondecreasing. Let $C_1'=C_1(1)$ and $C_2>C\sup_{\epsilon<1}|\!|\!|(u^{\epsilon}_R,\phi^{\epsilon}_R) (0)|\!|\!|^2_{\epsilon}$. For any arbitrarily given $\tau>0$, we choose $\tilde C$ sufficiently large such that $\tilde C>e^{4C_1'\tau}(1+C_2)(1+C_1')$. Then there exists $\epsilon_0>0$ such that ${\epsilon}\tilde C\leq 1$ for all $\epsilon<\epsilon_0$, we have
\begin{equation}\label{e999}
\begin{split}
\sup_{0\leq t\leq\tau}|\!|\!|(N_{e},U)(t)|\!|\!|^2_{\epsilon}\leq e^{4C_1'\tau}(C_2+1)<\tilde C.
\end{split}
\end{equation}
In particular, we have  the uniform bound for $(N_{e},U)$,
\begin{equation}
\begin{split}\label{final}
\sup_{0\leq t\leq\tau}\Big(&\|(N_{e},U)\|_{H^{2}}^{2} +\epsilon\|(\partial_{x}^{3}N_{e}, \partial_{x}^{3}U)\|_{L^{2}}^{2}\\
&+\epsilon^{2}\|\partial_{x}^{4}N_{e}\|_{L^{2}}^{2} +\epsilon^{3}\|\partial_{x}^{5}N_{e}\|_{L^{2}}^{2} +\epsilon^{4}\|\partial_{x}^{6}N_{e}\|_{L^{2}}^{2}\Big) \leq \tilde C.
\end{split}
\end{equation}
On the other hand, by Lemma \ref{L1} and \eqref{final}, we have
\begin{equation*}
\begin{split}
\sup_{0\leq t\leq\tau}\|N_{i}\|_{H^2}^2\leq \tilde C.
\end{split}
\end{equation*}
It is now standard to obtain uniform estimates independent of $\epsilon$ by the continuity method.

\end{proof}

%\end{CJK*}
\end{document}